\numberwithin{equation}{section}
\newtheorem{theorem}{Theorem}[section]
\newtheorem{definition}[theorem]{Definition}
\newtheorem{lemma}[theorem]{Lemma}
\newtheorem{corollary}[theorem]{Corollary}
\newtheorem{proposition}[theorem]{Proposition}
\newtheorem{remark}[theorem]{Remark}
\newcommand{\bt}{\begin{theorem}}
	\newcommand{\et}{\end{theorem}}
\newcommand{\bl}{\begin{lemma}}
	\newcommand{\el}{\end{lemma}}
\newcommand{\bd}{\begin{definition}}
	\newcommand{\ed}{\end{definition}}
\newcommand{\bc}{\begin{corollary}}
	\newcommand{\ec}{\end{corollary}}
\newcommand{\bp}{\begin{proof}}
	\newcommand{\ep}{\end{proof}}
\newcommand{\bx}{\begin{example}}
	\newcommand{\ex}{\end{example}}
\newcommand{\bi}{\begin{exercise}}
	\newcommand{\ei}{\end{exercise}}
\newcommand{\bo}{\begin{proposition}}
	\newcommand{\eo}{\end{proposition}}
\newcommand{\br}{\begin{remark}}
	\newcommand{\er}{\end{remark}}
\newcommand{\beq}{\begin{equation}}
	\newcommand{\eeq}{\end{equation}}
\newcommand{\ba}{\begin{align}}
	\newcommand{\ea}{\end{align}}
\newcommand{\bn}{\begin{enumerate}}
	\newcommand{\en}{\end{enumerate}}
\newcommand{\bg}{\begin{align*}}
	\newcommand{\bcs}{\begin{cases}}
		\newcommand{\ecs}{\end{cases}}
	\newcommand{\bean}{\begin{eqnarray*}}
		\newcommand{\eean}{\end{eqnarray*}}
	\def\bd{\mathrm{bd}\,}
	\title[On nodal solutions for a Kirchhoff-type problem]{On nodal solutions with a prescribed number of nodes for a Kirchhoff-type problem}
	\author[H.~Fan]{Haining Fan}
    \author[M. Squassina]{Marco Squassina}
	\author[J.~Zhang]{Jianjun Zhang}
\address[H.~Fan]{\newline\indent School of Mathematics
		\newline\indent
		China University of Mining and Technology
		\newline\indent
		Xuzhou, Jiangsu, 221116, P. R. China}
	 \email{\href{mailto:fanhaining888@163.com}{fanhaining888@163.com}}
\address[M. Squassina]{\newline\indent Dipartimento di Matematica e Fisica
\newline\indent
Universit\`a Cattolica del Sacro Cuore
\newline\indent
Via dei Musei 41, Brescia, Italy}\email{\href{marco.squassina@unicatt.it}{marco.squassina@unicatt.it}}
\address[J.~Zhang]{\newline\indent College of Mathematics and Statistics
\newline\indent
Chongqing Jiaotong University
\newline\indent
Xuefu, Nan'an, 400074, Chongqing, P. R. China}
\email{\href{mailto:zhangjianjun09@tsinghua.org.cn}{zhangjianjun09@tsinghua.org.cn}}
\subjclass[2000]{~35A15,35B09,35B38,35J20.}
\date{\today}
\keywords{Kirchhoff-type problem; Nodal solutions; Variational methods.}
\begin{document}
		
\begin{abstract}
We are concerned with the existence and asymptotic behavior of multiple radial sign-changing solutions with the nodal characterization  for a Kirchhoff-type problem involving the nonlinearity $|u|^{p-2}u(2<p<4)$ in  $\mathbb{R}^3$. By developing some useful analysis techniques and introducing a novel definition of the Nehari manifold for the auxiliary system of the equations, we show that,  for any positive integer $k$, the problem has a sign-changing solution $u_k^b$ changing signs exactly $k$ times. Furthermore, the energy of $u_k^b$ is strictly increasing in $k$, as well as some asymptotic behaviors of $u_k^b$ are obtained. Our result is a complement of [Deng Y, Peng S, Shuai W, {\it J. Funct. Anal.}, {\bf269}(2015), 3500-3527], where the case $2<p<4$ is left open.
\end{abstract}

\maketitle
	
\section{Introduction and main result}
\subsection{Background and motivation} In this paper, we discuss about the existence of infinitely many sign-changing solutions with the nodal characterization for the following Kirchhoff-type problem:
\begin{equation}\label{s1.1}
-\left(a+b\displaystyle\int_{\mathbb{R}^3}|\nabla u|^2{\rm d}x\right)\Delta u+V(x)u=|u|^{p-2}u,~ x\in\mathbb{R}^3,
\end{equation}
where $a$ is a positive constant,  $b$ is a small positive parameter, and $2<p<4$. Moreover, $V(x)$ is a continuous weight potential.

Problem (\ref{s1.1}) has a strong physical background. In fact, as a special case, the following Dirichlet problem
\begin{equation}\label{s1.02}
\left\{\begin{array}{ll}
-\left(a+b\displaystyle\int_{\Omega}|\nabla u|^2{\rm d}x\right)\Delta u+V(|x|)u=f(x,u),~ x\in\Omega,\\
u=0,~x\in\partial\Omega,
\end{array}\right.
\end{equation}
is closely related to the stationary analogue of the equation
\begin{equation*}
\rho\frac{\partial^2u}{\partial t^2}-\left(\frac{P_0}{h}+\frac{E}{2L}\displaystyle\int_0^L|\frac{\partial u}{\partial x}|^2{\rm d}x\right)\frac{\partial^2u}{\partial x^2}=0,
\end{equation*}
which is proposed by Kirchhoff in \cite{1} to describe the transversal oscillations of a stretched string. Kirchhoff's model takes into the changes in string length produced by vibration, so
the nonlocal term appears. Moreover, such nonlocal problems also appear in other fields as biological systems with $u$ is used to describe a process that depends on its own average. We refer
the readers to \cite{2,3} and the references therein for more physical background on Kirchhoff-type problems.

Mathematically, (\ref{s1.1}) or (\ref{s1.02}) is a nonlocal problem due to the presence of the nonlocal term $\displaystyle\int_{\mathbb{R}^3}|\nabla u|^2{\rm d}x\Delta u$ implies that
it is no longer a point-wise identity. This phenomenon brings some mathematical difficulties, and at the same time, makes the study of such a problem particularly interesting.
Actually, after Lions established an abstract functional analysis framework in \cite{4},   Kirchhoff-type problems have received much attention. By using the variational methods, a number of results on the existence and multiplicity of solutions for  Kirchhoff-type problems similar to
(\ref{s1.1}) or (\ref{s1.02}) have been obtained in the literature, such as positive solutions, sign-changing solutions, multiple solutions, ground states and semiclassical states,
see for example \cite{5,6,7,8,9,10,11,12,13,14} and the references therein. In particular, infinitely many  sign-changing solutions are established in \cite{15,16,17}, but no information
of nodal characterization was given. Besides, with the aid of the Non-Nehari manifold method and Nehari manifold method respectively, Tang-Cheng  \cite{18} and Ye  \cite{19}
both obtained the existence of the least energy nodal solutions with precisely two nodal domains under different conditions of $f(x,u)$ respectively.

However, regarding the multiple nodal solutions with the nodal characterization for  Kirchhoff-type problems, to the best of our knowledge, there are very few results in the context. When $b=0$, Bartsch-Willem \cite{20} and Cao-Zhu \cite{21}
considered the following semilinear equation
\begin{equation}\label{s1.03}
\left\{\begin{array}{ll}
-a\Delta u+V(|x|)u=f(|x|,u),~ x\in\mathbb{R}^N,\\
u\in H^1(\mathbb{R}^N),
\end{array}\right.
\end{equation}
they independently obtained infinitely many radial nodal solutions having a prescribed number of nodes. The case of the ball has also been studied by Struwe \cite{22}.
They first get the solution of (\ref{s1.03}) in each annulus, and then glue them by matching the normal derivative at each junction point. When $b>0$,  (\ref{s1.1}) or (\ref{s1.02}) can not be solved separately
on each annulus due to the appearance of the nonlocal term. Therefore, the method in \cite{20,21,22} can not be applied directly. Deng et al.\cite{23} resolve it by regarding the problem as
a system of $(k+1)$ equations with $(k+1)$ unknown functions $u_i$, each $u_i$ is supported on only one annulus and vanishes at the complement of it. We should point out that their
method strongly depend the following Nehari-type monotonicity condition  of $f$:
\begin{description}
  \item[(Ne)] $\frac{f(r,t)}{|t|^3}$ is increasing on $t\in(-\infty,0)\cup(0,+\infty)$ for every $r>0$.
\end{description}
Recently, Guo et al.\cite{24} obtained a similar result by assuming the following condition of $f(|x|,u)=K(|x|)f(u)$ in (\ref{s1.02}) :
\begin{description}
  \item[(\^{N}e)] there exists $\theta\in(0,1)$ such that for all $t>0$ and $\tau\in \mathbb{R}\backslash\{0\}$,
  \begin{equation*}
K(|x|)\left[\frac{f(\tau)}{\tau^3}-\frac{f(t\tau)}{(t\tau)^3}\right]sign(1-t)+\theta V_0\frac{|1-t^{-2}|}{\tau^2}\geq0.
\end{equation*}
\end{description}
Unfortunately, the assumptions (Ne) and (\^{N}e) both rule out some important cases such as $f(|x|,u)=|u|^{p-2}u$ for $p\in(2,4)$. In this paper, we aim to resolve this case.

\subsection{Main assumptions and results}
The first purpose of this paper is to find infinitely many radial sign-changing solutions to (\ref{s1.1}) with $2<p<4$. Assume that $V(x)$ satisfies the following condition.
\begin{description}
  \item[(V)] $V(x)$ is radial i.e. $V(x)=V(|x|)$, and $\displaystyle\inf_{x\in\mathbb{R}^3}V(x):=V_0>0$.
\end{description}
Then the first main result of this paper is stated as follows.
\begin{theorem}(Existence of nodal solutioins)\label{t1.1}
Assume that the assumption (V) holds and $3<p<4$. Then for every integer $k>0$, there exists $b^*>0$ which is defined in Lemma \ref{l2.4}  such that for any $b\in(0,b^*)$,  (\ref{s1.1}) admits a radial solution $u_k$, which changes exactly $k$-times.
\end{theorem}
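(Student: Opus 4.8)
The plan is to realize a $k$-nodal radial solution as a superposition $u=\sum_{i=1}^{k+1}u_i$ of one-signed pieces living on consecutive spherical annuli. I would fix radii $0=r_0<r_1<\cdots<r_k<r_{k+1}=+\infty$, set $A_i=\{x\in\mathbb R^3:\ r_{i-1}<|x|<r_i\}$, and look for $u_i\in H^1_0(A_i)$, extended by zero to $\mathbb R^3$, satisfying the alternating sign condition $(-1)^{i+1}u_i\ge 0$. Working in the radial space $H^1_r(\mathbb R^3)$, the disjointness of the supports gives $\int_{\mathbb R^3}|\nabla u|^2=\sum_i\int_{A_i}|\nabla u_i|^2$ and $\int|u|^p=\sum_i\int_{A_i}|u_i|^p$, so the only coupling between the pieces in the energy
\[
I_b(u)=\frac a2\int|\nabla u|^2+\frac12\int V u^2+\frac b4\Big(\int|\nabla u|^2\Big)^2-\frac1p\int|u|^p
\]
enters through the quartic nonlocal term, and then only via the single scalar $S:=\sum_i\int_{A_i}|\nabla u_i|^2$.

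First I would freeze this scalar. For a parameter $c\ge a$ I consider the auxiliary, \emph{local} Dirichlet problems $-c\,\Delta u_i+V u_i=|u_i|^{p-2}u_i$ in $A_i$ with $u_i\in H^1_0(A_i)$, whose Euler--Lagrange functional is $J_c(u_i)=\frac c2\int|\nabla u_i|^2+\frac12\int Vu_i^2-\frac1p\int|u_i|^p$. Since $2<p<4<2^*=6$, each such problem is subcritical and, for fixed $c>0$, the fibering map $t\mapsto J_c(tu_i)$ has a \emph{unique} positive critical point because $p>2$; hence the standard Nehari manifold for $J_c$ on $A_i$ is well defined and carries a one-signed least-energy solution $u_i=u_i(c)$. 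This is the heart of the device: although the Nehari monotonicity conditions (Ne) and (\^{N}e) fail for $|u|^{p-2}u$ when $p<4$, they fail \emph{only} through the nonlocal quartic term, which has now been absorbed into the coefficient $c$, while the residual local problems remain perfectly monotone. This is what motivates the novel Nehari manifold for the auxiliary system, obtained by imposing the $k+1$ decoupled Nehari identities simultaneously with the common coefficient $c$.

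Next I would restore the coupling by enforcing self-consistency. Setting $\Phi_b(c):=a+b\sum_i\int_{A_i}|\nabla u_i(c)|^2$, a genuine solution of the system requires $c=\Phi_b(c)$. As $b\downarrow 0$ the frozen coefficient tends to $a$ and the pieces $u_i(c)$ converge to the finite-energy solutions of the local problems at coefficient $a$, so $\sum_i\int|\nabla u_i(c)|^2$ stays bounded on a fixed interval $[a,a+1]$; for $b$ below the threshold $b^*$ of Lemma \ref{l2.4}, the map $c\mapsto\Phi_b(c)$ is then a contraction of this interval into itself, producing a consistent coefficient and, with it, a solution of the auxiliary system for the given radii. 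This is precisely where the smallness $b\in(0,b^*)$ — and, in the quantitative estimates, the range $3<p<4$ — is spent. It then remains to liberate the radii: writing $E_b(\mathbf r)$ for the total energy of the self-consistent pieces as a function of $\mathbf r=(r_1,\dots,r_k)$, I would minimize $E_b$ over the open simplex $0<r_1<\cdots<r_k<\infty$, exploiting that a nontrivial one-signed Nehari solution on a thin annulus has energy blowing up as its width tends to zero (so no interior radius collapses) and that $V_0>0$ prevents energy from leaking to spatial infinity, so that a minimizer is attained at an interior configuration. At such a minimizer the free-boundary (transversality) conditions force the one-sided normal derivatives of $u_i$ and $u_{i+1}$ to agree on each sphere $|x|=r_i$; combined with the decoupled equations and the consistent $c$, this shows that $u_k=\sum_i u_i$ is a radial weak solution of \eqref{s1.1}, smooth by elliptic regularity, and since each $u_i$ is nontrivial, one-signed, and vanishes precisely on the bounding spheres, the strong maximum principle yields that $u_k$ changes sign exactly $k$ times.

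The main obstacle is the step solving the auxiliary system. Because $p<4$, the fibering map of the \emph{full} functional $I_b$ may possess two positive critical points (a local minimum and a local maximum), so the naive Nehari set is neither a manifold nor the carrier of a canonical minimizer, which is exactly why (Ne)/(\^{N}e) were imposed in \cite{23,24}. Freezing the nonlocal coefficient restores monotonicity of the local problems, but one must then verify that the self-consistency equation $c=\Phi_b(c)$ is solvable and that the reduced energy is minimized at an interior radial configuration. Quantifying the smallness of $b$ that makes $\Phi_b$ contractive, and ruling out the collapse of annuli, are the delicate points, and they are where the hypotheses $b<b^*$ and $3<p<4$ are genuinely used.
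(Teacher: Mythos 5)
Your overall architecture (decompose into annuli, solve a $(k+1)$-equation system for fixed radii, minimize over the radii, and match normal derivatives at the junctions) is the same as the paper's, and your idea of first decoupling the system and then restoring the nonlocal coupling parallels the homotopy in $\mu\in[0,1]$ used in Lemma \ref{l2.1}. However, two steps of your argument have genuine gaps. First, the fixed-point equation $c=\Phi_b(c)$ is not known to define a contraction, nor even a single-valued continuous map: $\Phi_b(c)=a+b\sum_i\int_{A_i}|\nabla u_i(c)|^2$ requires selecting a least-energy solution $u_i(c)$ of each local problem, and uniqueness (or a continuous, let alone Lipschitz, selection) of positive least-energy solutions on an annulus is not established; without it the contraction mapping theorem cannot be invoked. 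The paper avoids this by never solving decoupled PDEs: it performs the homotopy only on the fibering scalars $(t_1,\dots,t_{k+1})$ for a \emph{fixed} $(u_1,\dots,u_{k+1})$ (where diagonal dominance of the Jacobian makes the Implicit Function Theorem applicable), and then minimizes the full coupled functional $E_b$ over the constrained set $N_k^-$, whose extra condition $(4-p)\int_{B_i}|u_i|^p\,{\rm d}x<2\|u_i\|_i^2$ is precisely what replaces the failed monotonicity condition (Ne).

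Second, and more seriously, your derivative-matching step is asserted rather than proved. The paper's proof of Theorem \ref{t1.1} hinges on the fact that $\varphi(\overrightarrow{\textbf{r}}_k)$ is an \emph{infimum} over $N_k^{\overrightarrow{\textbf{r}}_k,-}$: when the one-sided derivatives at $\overline r_{i_0}$ disagree, one glues a linear interpolant across the junction, projects the glued function onto $N_k^{\overrightarrow{\widehat{\textbf{r}}}_k,-}$ for the shifted radii, and uses $\varphi(\overrightarrow{\widehat{\textbf{r}}}_k)\le I_b(Z)$ together with the minimality of $\overrightarrow{\overline{\textbf{r}}}_k$ to reach the contradiction $I_b(W)\le I_b(Z)<I_b(W)$. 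In your scheme $E_b(\mathbf r)$ is merely the energy of one particular self-consistent solution, with no minimality over a class of competitors containing the glued function, so the inequality $E_b(\widehat{\mathbf r})\le I_b(Z)$ --- the engine of the whole gluing method --- is unavailable; the ``free-boundary transversality condition'' you invoke is exactly what needs proof. Finally, a smaller point: the restriction $3<p<4$ is not spent in the contraction estimates but in ruling out $r_k\to\infty$; the paper uses the Strauss decay $|u(x)|\le C\|u\|_{\mathcal H}/|x|$ to obtain $r_k^{p-3}\le C\|\omega_{k+1}^{\overrightarrow{\textbf{r}}_k}\|_{k+1}^{p-2}$, which needs $p>3$, and the assumption $V_0>0$ alone does not prevent the outermost annulus from escaping to infinity.
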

\begin{remark}\label{r1.1}
It is easy to see that if $u$ is a solution of  (\ref{s1.1}), then $-u$ follows. Moreover, we should point that, if the domain of (\ref{s1.1}) is replaced by $B_R(0)\subset\mathbb{R}^3$, Theorem \ref{t1.1} also holds under the condition $2<p<4$ instead of $3<p<4$. Actually, we just need to replace the term $(ii)$ in Lemma 3.1 by $(ii')$: If $r_k\rightarrow R$, then $\varphi(\overrightarrow{\textbf{r}_k})\rightarrow+\infty$. Fortunately, the proof of it is analogous to the term $(i)$ in Lemma 3.1, and just need the assumption $2<p<4$. The remain proof is similar and only needs a little modification.
\end{remark}

The next aim of this paper is to present that the energy of $u_k$ is strictly increasing in $k$. Before state it clearly, we at first give some definitions. Define the Sobolev space
 \begin{equation*}
\mathcal{H}:=\left\{u\in H_r^1(\mathbb{R}^3)|\displaystyle\int_{\mathbb{R}^3}(a|\nabla u|^2+V(|x|)u^2){\rm d}x<+\infty\right\},
\end{equation*}
with the norm
 \begin{equation*}
\|u\|_{\mathcal{H}}:=\left(\displaystyle\int_{\mathbb{R}^3}(a|\nabla u|^2+V(|x|)u^2){\rm d}x\right)^\frac{1}{2}.
\end{equation*}
Since $a$ is a positive constant, we assume $a=1$ from then on.
Noticing the condition (V), we know that the embedding $\mathcal{H}\hookrightarrow H_r^1(\mathbb{R}^3)$ is continuous. Moreover, we have $\mathcal{H}\hookrightarrow L^q(\mathbb{R}^3)$, $q\in[2,6]$, and this embedding is compact for $q\in(2,6)$ by Strauss \cite{25}.
Then we can denote
\begin{equation*}
S_q:=\displaystyle\inf_{u\in\mathcal{H}\backslash\{0\}}\frac{\|u\|_{\mathcal{H}}^2}{|u|^2_q},
\end{equation*}
for $q\in(2,6]$.

Define the energy functional $I_b$ associated with (\ref{s1.1}) on $\mathcal{H}$ by
\begin{equation}\label{s1.2}
I_b(u):=\frac{1}{2}\displaystyle\int_{\mathbb{R}^3}(|\nabla u|^2+V(|x|)u^2){\rm d}x+\frac{b}{4}\left(\displaystyle\int_{\mathbb{R}^3}|\nabla u|^2{\rm d}x\right)^2-\frac{1}{p}\displaystyle\int_{\mathbb{R}^3}|u|^p{\rm d}x.
\end{equation}
Then, it is standard to show that $I_b\in C^2(\mathcal{H},\mathbb{R})$ and
\begin{align}\label{s1.0}
&~~~~\langle I'_b(u),v\rangle\notag
\\&=\displaystyle\int_{\mathbb{R}^3}(\nabla u\nabla v+V(|x|)uv){\rm d}x+b\displaystyle\int_{\mathbb{R}^3}|\nabla u|^2{\rm d}x\displaystyle\int_{\mathbb{R}^3}\nabla u\nabla v{\rm d}x-\displaystyle\int_{\mathbb{R}^3}|u|^{p-2}uv{\rm d}x,
\end{align}
for any $u,v\in\mathcal{H}$. Clearly, $u\in \mathcal{H}$ is a solution of (\ref{s1.1}) if and only if $u$ is a critical point of $I_b$. Furthermore, we show the second main result of this paper in the following.
\begin{theorem}(Monotonicity of energy)\label{t1.2}
Assume the assumptions of Theorem \ref{t1.1} hold, then the energy of $u_k$ is strictly increasing in $k$, i.e.
\begin{equation*}
I_b(u_{k+1})>I_b(u_k)~\text{and}~I_b(u_k)>(k+1)I_b(u_0)
\end{equation*}
for any integer $k\geq0$. Moreover, if $V(x)\in C^1(\mathbb{R}^3,\mathbb{R})$ and $\langle\nabla V(x),x\rangle\leq0$ for any $x\in \mathbb{R}^3$, $u_0$ is  a ground state radial solution of  (\ref{s1.1}).
\end{theorem}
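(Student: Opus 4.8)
The plan rests on the decomposition furnished by Theorem~\ref{t1.1}: one writes $u_k=\sum_{i=1}^{k+1}u_i$, where $u_i$ has constant sign and is supported in the $i$-th nodal annulus $A_i$ cut out by the optimal radii, and each $u_i$ solves the constant-coefficient equation $-(1+bT_k)\Delta u_i+V u_i=|u_i|^{p-2}u_i$ on $A_i$, with $T_k:=\int_{\mathbb{R}^3}|\nabla u_k|^2\,\mathrm{d}x=\sum_i t_i$ and $t_i:=\int_{A_i}|\nabla u_i|^2\,\mathrm{d}x$. Testing with $u_i$ gives the per-piece Nehari relation $\|u_i\|_{\mathcal H}^2+bT_k t_i=|u_i|_p^p$, and summing it turns the energy into the clean identity
\[
I_b(u_k)=\Big(\tfrac12-\tfrac1p\Big)\sum_{i=1}^{k+1}|u_i|_p^p-\tfrac b4 T_k^2 .
\]
This exhibits the single troublesome term $-\tfrac b4T_k^2$, whose negative sign (because $\tfrac14-\tfrac1p<0$ for $p<4$) is the source of every difficulty; the whole proof amounts to showing that, for $b<b^*$ from Lemma~\ref{l2.4}, this term is a controllable perturbation of the positive leading part.

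For $I_b(u_{k+1})>I_b(u_k)$ I would compare the two minimal values by removing a nodal piece. From the $(k+2)$ pieces of $u_{k+1}$, drop the outermost one to form $v:=\sum_{i=1}^{k+1}u_i$, a function with $k$ sign changes. Using the identity above and the per-piece relations, a direct computation gives
\[
I_b(u_{k+1})-I_b(v)=\Big(\tfrac12-\tfrac1p\Big)|u_{k+2}|_p^p-\tfrac b4 t_{k+2}^2 ,
\]
which is strictly positive for $b<b^*$, since $(\tfrac12-\tfrac1p)|u_{k+2}|_p^p$ is bounded below uniformly in small $b$. Because dropping a piece lowers the coupling from $T_{k+1}$ to $T_{k+1}-t_{k+2}$, the pieces of $v$ no longer satisfy the $k$-node Nehari constraint exactly, so $v$ must be re-optimized into an admissible $k$-node configuration $\widehat v$; by the minimality of $u_k$ over such configurations, $I_b(u_k)\le I_b(\widehat v)$. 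The main obstacle is to show that this re-optimization changes the energy by only $O(b)$ — strictly less than the order-one gain $(\tfrac12-\tfrac1p)|u_{k+2}|_p^p$ — so that $I_b(u_k)\le I_b(\widehat v)<I_b(u_{k+1})$; this is where the uniform (in $b\in(0,b^*)$) a priori bounds on the $t_i$ and $T_k$ and the uniform lower bounds on $|u_i|_p$ are essential.

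For $I_b(u_k)>(k+1)I_b(u_0)$ I would estimate each nodal piece from below by a ground state. Setting $A:=1+bT_k$ and extending $u_i$ by zero, $u_i$ lies on the Nehari manifold $\mathcal N_A$ of $J_A(u):=\tfrac12\int_{\mathbb{R}^3}(A|\nabla u|^2+V u^2)\,\mathrm{d}x-\tfrac1p|u|_p^p$, so $(\tfrac12-\tfrac1p)|u_i|_p^p=J_A(u_i)\ge m_A:=\inf_{\mathcal N_A}J_A$. The key extra input is strictness: a minimizer of $J_A$ is positive throughout $\mathbb{R}^3$, whereas each $u_i$ vanishes off an annulus, so in fact $\sum_i(\tfrac12-\tfrac1p)|u_i|_p^p\ge(k+1)m_A+\delta_k$ with a gap $\delta_k>0$ that stays bounded away from $0$ as $b\to0$. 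Since $I_b(u_0)=m_{A_0}-\tfrac b4\|\nabla u_0\|_2^4$ with $A_0=1+b\|\nabla u_0\|_2^2$, and both $m_A$ and $m_{A_0}$ converge to the common local level as $b\to0$, for $b<b^*$ the corrections $|(k+1)(m_A-m_{A_0})|$, $\tfrac b4T_k^2$ and $(k+1)\tfrac b4\|\nabla u_0\|_2^4$ are all smaller than $\delta_k$, which yields the strict inequality. The delicate point is to make $\delta_k$ quantitatively dominate the $b$-corrections uniformly, again tied to the a priori bounds and to the coefficient mismatch $A\neq A_0$.

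Finally, for the ground-state property under $V\in C^1(\mathbb{R}^3,\mathbb{R})$ and $\langle\nabla V(x),x\rangle\le0$, I would show $u_0$ attains the least energy among all nontrivial radial critical points of $I_b$. Since for $3<p<4$ the fibering $t\mapsto I_b(tu)$ need not be monotone, the bare Nehari manifold is not an effective constraint; instead I would use the Pohozaev identity
\[
\tfrac12\big(1+b\|\nabla u\|_2^2\big)\|\nabla u\|_2^2+\tfrac32\int_{\mathbb{R}^3} V u^2\,\mathrm{d}x+\tfrac12\int_{\mathbb{R}^3}\langle\nabla V,x\rangle u^2\,\mathrm{d}x=\tfrac3p|u|_p^p ,
\]
valid for solutions when $V\in C^1$. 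The hypothesis $\langle\nabla V,x\rangle\le0$ renders the accompanying term non-positive and makes the Nehari–Pohozaev functional monotone along the two-parameter scaling $u\mapsto t\,u(\cdot/\lambda)$, so the associated constraint set is a genuine manifold on which $I_b$ is coercive and its infimum is attained. I would then check that every nontrivial solution lies on this set, that $u_0$ belongs to it, and that the constrained infimum coincides with $I_b(u_0)$, identifying $u_0$ as a ground state. Throughout, the overarching obstacle is the unfavorable sign of the Kirchhoff term when $p<4$: it forces every comparison to be perturbative in $b<b^*$, and it is precisely the Pohozaev constraint — available only under $\langle\nabla V,x\rangle\le0$ — that substitutes for the defective Nehari manifold in this last step.
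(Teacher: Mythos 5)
Your overall architecture (drop a nodal piece and re-optimize; compare each piece to the one-bump level; use Pohozaev for the ground-state claim) parallels the paper's, but at each of the three steps you replace the paper's exact algebraic comparison by a perturbative one, and in each case the step you yourself flag as "the main obstacle" is precisely where the argument is incomplete. For $I_b(u_{k+1})>I_b(u_k)$: your identity $I_b(u_{k+1})-I_b(v)=(\tfrac12-\tfrac1p)|u_{k+2}|_p^p-\tfrac b4 t_{k+2}^2$ is correct, but the re-optimized configuration $\widehat v$ satisfies $I_b(\widehat v)\ge I_b(v)$ (it is a \emph{maximum} over a box containing $(1,\dots,1)$, by the analogue of \eqref{s2.22}), so you still must rule out $I_b(\widehat v)\ge I_b(u_{k+1})$; your proposed bound "the re-optimization changes the energy by only $O(b)$" is asserted, not proved, and would in any case require uniform-in-$b$ control of the $t_i$ and of the fibering-map Hessian, possibly shrinking $b^*$ further. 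The paper avoids this entirely: after dropping a component, the remaining tuple satisfies the hypotheses \eqref{s2.20}--\eqref{s2.21} of Lemma \ref{l2.3}, which produces scalars $\widehat t_i\le 1$, and then the point $(\widehat t_1,0,\widehat t_3,\dots,\widehat t_{k+2})$ lies in the box $[0,1]^{k+2}$ on which the $(k+2)$-variable fibering map attains its maximum \emph{uniquely} at $(1,\dots,1)$; this gives the strict inequality \eqref{s4.5} exactly, with no smallness of $b$ beyond $b<b^*$.

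For $I_b(u_k)>(k+1)I_b(u_0)$ there is a genuine gap in your scheme: the identity $I_b(u_0)=m_{A_0}-\tfrac b4\|\nabla u_0\|_2^4$ presupposes that $u_0$ is a ground state of the frozen-coefficient functional $J_{A_0}$, i.e.\ $J_{A_0}(u_0)=m_{A_0}$, which is not known ($u_0$ is only a minimizer of $I_b$ over $N_0^-$); and the uniform gap $\delta_k>0$ between $(k+1)m_A$ and the sum of the piecewise levels is asserted without a compactness argument. The paper's route is both shorter and exact: each piece $\widehat\omega_i$, extended by zero, satisfies \eqref{s2.20}--\eqref{s2.21} for the one-component problem, so Lemma \ref{l2.3} yields $\widehat{\widehat t}_i\in(0,1)$ with $\widehat{\widehat t}_i\widehat\omega_i\in N_0^-$, whence $I_b(u_0)\le I_b(\widehat{\widehat t}_i\widehat\omega_i)$; computing $I_b-\tfrac14 J_b$ and using $\widehat{\widehat t}_i<1$ together with the discarded positive coupling terms gives the strict inequality with no frozen functional and no gap estimate. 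Finally, for the ground-state property your Nehari--Pohozaev-manifold program (coercivity, attainment, monotonicity along the two-parameter scaling) is a different and much heavier route that you do not carry out; the paper only needs the Pohozaev identity \eqref{s4.8} to check that any least-energy radial solution $v_0$ satisfies $(4-p)\int_{\mathbb{R}^3}|v_0|^p\,{\rm d}x<2\|v_0\|_{\mathcal H}^2$ (here $p\ge 3$ and $\langle\nabla V,x\rangle\le 0$ enter), i.e.\ $v_0\in N_0^-$, and then invokes the minimality of $u_0$ over $N_0^-$. I would encourage you to redo all three steps using the maximality statement \eqref{s2.22} of Lemma \ref{l2.3} as the comparison device rather than perturbation in $b$.
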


We denote $u^b_k$ as the solution obtained in Theorem \ref{t1.1} to emphasize that it depends on $b$. Then our third main result shall show the convergence properties of it as $b\rightarrow0^+$ in the following theorem.
\begin{theorem}(Asymptotic behaviour)\label{t1.3}
Let $u^b_k$ be obtained in Theorem \ref{t1.1} with $b\rightarrow0^+$. Then, $u^b_k\rightarrow u^0_k$ in $\mathcal{H}$ up to a subsequence, where $u^0_k$ is a least energy radial solution which changes sign exactly $k$-times of the following equation:
\begin{equation}\label{s1.3}
-\Delta u+V(x)u=|u|^{p-2}u,~ x\in\mathbb{R}^3.
\end{equation}
\end{theorem}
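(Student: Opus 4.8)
The plan is to extract a limit of the family $\{u_k^b\}$ as $b\to0^+$, identify it as a solution of the limit equation (\ref{s1.3}), and then check that both the exact nodal count and the least-energy property survive the passage to the limit. First I would record that the energy levels $c_k^b:=I_b(u_k^b)$ produced by the construction of Theorem \ref{t1.1} stay bounded as $b\to0^+$. This follows by inserting a fixed, $b$-independent competitor built from a $k$-node configuration for (\ref{s1.3}), projected onto the nodal Nehari set of the auxiliary system: the associated projection parameters converge to finite limits because $p-2<2$ forces the Kirchhoff contribution $b(\int_{\mathbb R^3}|\nabla u|^2)^2$ of the competitor to remain lower order, so the relevant root of the fiber map is the bounded one. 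Together with the testing identity $\langle I_b'(u_k^b),u_k^b\rangle=0$ and the coercive part $(\frac12-\frac1p)\|u_k^b\|_{\mathcal H}^2$, this yields uniform bounds $\|u_k^b\|_{\mathcal H}\le C$ and $b(\int_{\mathbb R^3}|\nabla u_k^b|^2)^2\le C$ for $b\in(0,b^*)$; in particular $b\int_{\mathbb R^3}|\nabla u_k^b|^2\to0$. Passing to a subsequence $b_n\to0^+$, the compact embedding $\mathcal H\hookrightarrow L^q(\mathbb R^3)$, $q\in(2,6)$, gives $u_k^{b_n}\rightharpoonup u^*$ in $\mathcal H$ and $u_k^{b_n}\to u^*$ strongly in $L^q$.

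Next I would pass to the limit in the weak formulation (\ref{s1.0}). The only delicate term is the nonlocal one, $b_n\int_{\mathbb R^3}|\nabla u_k^{b_n}|^2\int_{\mathbb R^3}\nabla u_k^{b_n}\nabla v$, which vanishes since $b_n\int_{\mathbb R^3}|\nabla u_k^{b_n}|^2\to0$ while $\int_{\mathbb R^3}\nabla u_k^{b_n}\nabla v$ stays bounded; the linear terms pass by weak convergence and the nonlinear one by strong $L^p$ convergence. Hence $u^*$ is a weak radial solution of (\ref{s1.3}). To upgrade this to strong convergence in $\mathcal H$, I would test the Euler--Lagrange identities for $u_k^{b_n}$ and for $u^*$ against $u_k^{b_n}-u^*$ and subtract, which gives $\|u_k^{b_n}-u^*\|_{\mathcal H}^2=\int_{\mathbb R^3}\big(|u_k^{b_n}|^{p-2}u_k^{b_n}-|u^*|^{p-2}u^*\big)(u_k^{b_n}-u^*)-b_n\int_{\mathbb R^3}|\nabla u_k^{b_n}|^2\int_{\mathbb R^3}\nabla u_k^{b_n}\nabla(u_k^{b_n}-u^*)$, both terms tending to $0$ by strong $L^p$ convergence and by $b_n\int_{\mathbb R^3}|\nabla u_k^{b_n}|^2\to0$. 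Thus $u_k^{b_n}\to u^*$ in $\mathcal H$, and the uniform lower bound $\|u_k^b\|_{\mathcal H}\ge c_0>0$ coming from the Sobolev inequality on the Nehari set guarantees $u^*\neq0$.

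The hard part is to show $u^*$ still changes sign \emph{exactly} $k$ times. Writing the nodal radii of $u_k^b$ as $0=r_0^b<r_1^b<\cdots<r_k^b<r_{k+1}^b=+\infty$ with bumps $u_i^b$ supported on the annuli $A_i^b$, I would pass to a further subsequence so that $r_i^{b_n}\to r_i^*$ with $0=r_0^*\le\cdots\le r_{k+1}^*=+\infty$. The two dangers are (i) collapse of an annulus, $r_i^*=r_{i+1}^*$, and (ii) a radius escaping, $r_1^*=0$ or $r_k^*=+\infty$, degenerating a bump. Both are excluded by a uniform positive lower bound on the $L^p$-mass (equivalently the $\mathcal H$-norm) of each bump $u_i^b$: on its annulus $u_i^b$ solves an equation with effective coefficient $1+b\int_{\mathbb R^3}|\nabla u_k^b|^2\ge1$, so the same Sobolev-on-Nehari estimate forces $\|u_i^b\|_{\mathcal H}\ge c_0>0$, and a collapsing or escaping annulus would make the corresponding mass vanish, contradicting the strong $L^p$ convergence $u_k^{b_n}\to u^*$. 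Since the nonlocal coupling disappears in the limit and each limiting bump keeps a fixed sign on its nondegenerate annulus, $u^*$ changes sign exactly $k$ times. This step is where the $2<p<4$ regime genuinely bites, as the Kirchhoff coupling must be shown to neither merge nor annihilate the bumps.

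Finally I would identify $u^*$ as a least-energy $k$-node radial solution of (\ref{s1.3}). By strong $\mathcal H$-convergence and $b_n\to0$ one has $I_{b_n}(u_k^{b_n})\to I_0(u^*)$, where $I_0(u)=\frac12\|u\|_{\mathcal H}^2-\frac1p\int_{\mathbb R^3}|u|^p$ is the energy for (\ref{s1.3}); thus $I_0(u^*)=\lim_n c_k^{b_n}$. Denoting by $c_k^0$ the least energy among radial solutions of (\ref{s1.3}) with exactly $k$ nodes, the competitor argument of the first paragraph gives $\limsup_{b\to0}c_k^b\le c_k^0$, while $u^*$ being itself such a solution gives $I_0(u^*)\ge c_k^0$. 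Combining, $I_0(u^*)=c_k^0$, so $u^*=:u_k^0$ is a least-energy radial solution of (\ref{s1.3}) changing sign exactly $k$ times, which is the assertion.
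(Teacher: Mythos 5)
Your proposal is correct and follows the same skeleton as the paper's proof: a uniform bound on the levels via a $b$-independent competitor projected onto the nodal Nehari set, extraction of a weak limit, identification of the limit as a solution of \eqref{s1.3} (the nonlocal term dying because $b\int|\nabla u_k^b|^2\to0$), upgrade to strong $\mathcal H$-convergence by testing the two Euler--Lagrange identities against the difference, nonvanishing of each bump from the Sobolev-on-Nehari lower bound $\|u_i^b\|_i\geq S_p^{p/(2(p-2))}$, and a two-sided energy comparison for the least-energy claim. The one genuine divergence is the treatment of the nodal radii. The paper asserts that the optimal radii $\overrightarrow{\overline{\textbf{r}}}_k$ from Lemma \ref{l3.1} ``do not depend on $b$,'' so the annuli are frozen and the nodal count passes to the limit for free; this assertion is not really justified, since $E_b$ and hence $\varphi$ depend on $b$ and the minimizer over $\Lambda_k$ should in general move with $b$. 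You instead let $r_i^{b_n}$ converge along a subsequence and exclude both collapse ($r_i^*=r_{i+1}^*$) and escape ($r_k^*=\infty$) by playing the uniform lower bound on each bump's $L^p$-mass against the already-established strong $L^p$ convergence; this is more robust, and as a bonus it avoids the Strauss-decay argument (hence the restriction $p>3$) that the paper's Lemma \ref{l3.1}(ii) needs for the escaping annulus. Your least-energy step is also slightly cleaner: you compare with the abstract level $c_k^0$ rather than assuming, as the paper implicitly does, that a least-energy $k$-node solution of \eqref{s1.3} has its nodes exactly at the radii $\overline{r}_i$ coming from the Kirchhoff problem. The price of your route is the extra subsequence extraction and the collapse/escape dichotomy, but what it buys is a proof that does not rest on the unproved $b$-independence of the optimal radii.
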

\begin{remark}\label{r1.2}
We point out that, similarly as Remark \ref{r1.1},  for the domain of (\ref{s1.1}) is replaced by $B_R(0)\subset\mathbb{R}^3$, Theorems \ref{t1.2} and \ref{t1.3} also hold under the condition $2<p<4$ instead of $3<p<4$ except that we do not know whether that $u_0$ is a ground state solution  of  (\ref{s1.1}). We refer the readers to see the proof of Theorem \ref{t1.2} for details.
\end{remark}
\subsection{Main novelty and strategy} In \cite{23}, the authors adopt a gluing argument to obtain the existence of nodal solutions to problem \eqref{s1.1} with $4<p<6$ by investigating system (\ref{s2.1}) involving $(k+1)$ equations. One of the main ideas in \cite{23} is to introduce the following Nehari manifold
$$
N_k:=\left\{(u_1,\cdots,u_{k+1})\in\mathcal{\textbf{H}}_k|u_i\neq0,\langle\partial_{u_i}E_b(u_1,\cdots,u_{k+1}),u_i\rangle=0\right\},
$$
where $\textbf{H}_k$ and $E_b$ are given in Section 2. Compared to \cite{23}, the problem addressed in the present paper becomes more difficult due to $2<p<4$. With the presence of the nonlocal term $\displaystyle\int_{\mathbb{R}^3}|\nabla u|^2{\rm d}x\Delta u$, it seems tough to get the boundedness of the associated (PS) sequence in the case of $2<p<4$. To overcome this obstacle, we turn to consider a subset $N_k^-$ of the Nehari manifold $N_k$ by imposing the following additional constraint
$$
(4-p)\displaystyle\int_{B_i}|u_i|^p{\rm d}x<2\|u_i\|_i^2~\text{for}~i=1,\cdots,k+1.
$$
This constraint implies that $N_k^-$ only contains local maximum points of the associated fibering maps and then is natural. More important is that with this additional constraint, the associated (PS) sequence can be proved to be bounded. We should point out that this trick also provides an idea to study other types of Krichhoff problems with the nonlinearity $|u|^{p-2}u, 2<p<4$, by using variational methods.

The remainder part of the paper is the following. In Section 2, we first clarify an appropriate variational framework to solve (\ref{s1.1}), and then deal with an auxiliary system
whose functional is closely related to the original energy functional.
In Section 3, by using the critical point obtained in Section 2 as a building block, we obtain the ideal solution of (\ref{s1.1}), and complete the proof of Theorem \ref{t1.1}.
In Section 4, we study some asymptotic behaviors  of the solutions obtained in Section 2 with the help of some properties of the modified energy functional and some analysis techniques, and
prove Theorems \ref{t1.2} and \ref{t1.3}.

\subsection{Notation}
\begin{enumerate}
\item[$\bullet$]$\rightarrow$ (resp. $\rightharpoonup$) the strong (resp. weak) convergence.
\item[$\bullet$]$|\cdot|_q$ the usual norm of the space $L^q(\mathbb{R}^3),(1\leq q<\infty)$.
\item[$\bullet$]$|\cdot|_\infty$ denote the norm of the space $L^\infty(\mathbb{R}^3)$.
\item[$\bullet$]$C$ or $C_i(i=0,1,2,\ldots)$ denote some positive constants that may change from line to line.
\end{enumerate}
\section{\normalsize{Preliminary}}
\quad\quad
In this section, we establish the variational framework and study some properties of the energy functional corresponding to a system of $(k+1)$-equations associated with  (\ref{s1.1}). At first, we give some definitions and introduce the working space.

Fix $k\in\mathbb{N}$, denote
\begin{equation*}
\Lambda_k:=\left\{\textbf{r}_k=(r_1,\cdots,r_k)\in\mathbb{R}^k|0=:r_0<r_1<\cdots<r_k<r_{k+1}:=\infty\right\},
\end{equation*}
and
\begin{equation*}
B_i:=B_{i,\textbf{r}_k}=\{x\in\mathbb{R}^3|r_{i-1}<|x|<r_i\},
\end{equation*}
for $i=1,\cdots,k+1$. The idea of this paper is to obtain the solution of (\ref{s1.1}) in each $B_i$ first and then glue them by matching the normal derivative at each junction point. For an element $\textbf{r}_k\in \Lambda_k$ fixed and so a family of annuli $\{B_i\}_{i=1}^{k+1}$ also. Define the Sobolev space
\begin{equation*}
\mathcal{H}_i:=\{u\in H_0^1(B_i)|u(x)=u(|x|),~u(x)=0,~\text{if}~x\not\in B_i\},
\end{equation*}
equipped with the norm
\begin{equation*}
\|u\|_i:=\left(\displaystyle\int_{B_i}(|\nabla u|^2+V(|x|)u^2){\rm d}x\right)^\frac{1}{2},
\end{equation*}
for $i=1,\cdots,k+1$. Then we let $\mathcal{\textbf{H}}_k:=\mathcal{H}_1\times\cdots\times\mathcal{H}_{k+1}$ and set the functional $E_b:\mathcal{\textbf{H}}_k\rightarrow\mathbb{R}$ by
\begin{align}
E_b(u_1,\cdots,u_{k+1}):&=\frac{1}{2}\displaystyle\sum_{i=1}^{k+1}\|u_i\|_i^2+\frac{b}{4}\displaystyle\sum_{i=1}^{k+1}\left(\displaystyle\int_{B_i}|\nabla u_i|^2{\rm d}x\right)^2\notag
\\&+\frac{b}{4}\displaystyle\sum_{i\neq j}^{k+1}\left(\displaystyle\int_{B_i}|\nabla u_i|^2{\rm d}x\displaystyle\int_{B_i}|\nabla u_j|^2{\rm d}x\right)-\frac{1}{p}\displaystyle\sum_{i=1}^{k+1}\displaystyle\int_{B_i}|\nabla u_i|^p{\rm d}x,\notag
\end{align}
where $u_i\in\mathcal{H}_i$ for $i=1,\cdots,k+1$.

Standard calculations shows that $E_b(u_1,\cdots,u_{k+1})=I_b(\displaystyle\sum_{i=1}^{k+1}u_i)$. Also, each component $u_i$ of a critical point of $E_b$ satisfies
\begin{equation}\label{s2.1}
\left\{\begin{array}{ll}
-\left(1+b\displaystyle\sum_{j=1}^{k+1}\displaystyle\int_{B_j}|\nabla u_j|^2{\rm d}x\right)\Delta u_i+V(|x|)u_i=|u_i|^{p-2}u_i,~ x\in B_i,\\
u_i=0,~x\not\in B_i.
\end{array}\right.
\end{equation}
Since $2<p<4$, we define the $N_k^-$ corresponding to the local maximum points of the fibering map $\phi_{u_1,\ldots,u_{k+1}}(t_1,\cdots,t_{k+1})$ by
\begin{align}
N_k^-:&=\{(u_1,\cdots,u_{k+1})\in\mathcal{\textbf{H}}_k|u_i\neq0,\langle\partial_{u_i}E_b(u_1,\cdots,u_{k+1}),u_i\rangle=0,\notag
\\&~~~~~~(4-p)\displaystyle\int_{B_i}|u_i|^p{\rm d}x<2\|u_i\|_i^2~\text{for}~i=1,\cdots,k+1\}\notag
\\&=\{(u_1,\cdots,u_{k+1})\in\mathcal{\textbf{H}}_k|u_i\neq0,\|u_i\|_i^2+b\left(\displaystyle\int_{B_i}|\nabla u_i|^2{\rm d}x\right)^2\notag
\\&~~~~~~+b\displaystyle\int_{B_i}|\nabla u_i|^2{\rm d}x\displaystyle\sum_{j\neq i}^{k+1}\displaystyle\int_{B_j}|\nabla u_j|^2{\rm d}x=\displaystyle\int_{B_i}|u_i|^p{\rm d}x\notag
\\&~~~~~~(4-p)\displaystyle\int_{B_i}|u_i|^p{\rm d}x<2\|u_i\|_i^2~\text{for}~i=1,\cdots,k+1\},\notag
\end{align}
where $\phi_{u_1,\ldots,u_{k+1}}(t_1,\cdots,t_{k+1}):=E_b(t_1u_1,\cdots,t_{k+1}u_{k+1})$.

Let us check that $N_k^-$ is nonempty in $\mathcal{\textbf{H}}_k$.
\begin{lemma}\label{l2.1}
Assume that (V) hold and $(u_1,\cdots,u_{k+1})\in\mathcal{\textbf{H}}_k$ with $\frac{\left(\displaystyle\int_{B_i}|u_i|^p{\rm d}x\right)^\frac{2}{p}}{\|u_i\|_i^2}\geq(2S_p)^{-1}$ for $i=1,\ldots,k+1$. Then, for each $b\in(0,b_*)$ with
\begin{equation*}
b_*:=\min\left\{\frac{p-2}{4-p}\left(\frac{4-p}{2}\right)^\frac{2}{p-2}(2S_p)^\frac{-p}{p-2},\widehat{b}\right\},
\end{equation*}
and
\begin{equation*}
\widehat{b}:=\frac{p-2}{4-p}\left(1+k2^{\frac{2}{p-2}}\left(\frac{2}{4-p}\right)^\frac{2}{p-2}\right)^{-1}\left(\frac{4-p}{2}\right)^\frac{2}{p-2}(2S_p)^\frac{-p}{p-2}.
\end{equation*}
there is a unique $(k+1)$-tuple $(t_1,\cdots,t_{k+1})\in(\mathbb{R}_{>0})^{k+1}$ of positive numbers such that $(t_1u_1,\cdots,t_{k+1}u_{k+1})\in N_k^-$.
\end{lemma}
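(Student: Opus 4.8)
The plan is to analyze the fibering map $\phi:=\phi_{u_1,\dots,u_{k+1}}$ directly. Writing $\alpha_i:=\|u_i\|_i^2$, $\gamma_i:=\int_{B_i}|\nabla u_i|^2\,{\rm d}x$ and $\rho_i:=\int_{B_i}|u_i|^p\,{\rm d}x$ (all positive since $u_i\neq0$), and abbreviating $S:=\sum_{j=1}^{k+1}t_j^2\gamma_j$, the disjointness of the supports gives
\[
\phi(t_1,\dots,t_{k+1})=\tfrac12\sum_{i}t_i^2\alpha_i+\tfrac{b}{4}S^2-\tfrac1p\sum_i t_i^p\rho_i .
\]
A tuple $(t_1u_1,\dots,t_{k+1}u_{k+1})$ lies in $N_k^-$ exactly when, for every $i$, one has $\partial_{t_i}\phi=0$ together with the strict inequality in the definition. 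Since $\partial_{t_i}\phi=t_i\bigl(\alpha_i+b\gamma_i S-t_i^{p-2}\rho_i\bigr)$, the first condition reads $t_i^{p-2}\rho_i=\alpha_i+b\gamma_i S$, and a short computation turns the second into $b\gamma_i S<\frac{p-2}{4-p}\alpha_i$ for all $i$, i.e. $S<\bar S:=\min_i\frac{(p-2)\alpha_i}{(4-p)b\gamma_i}$. Because $2<p<4$, along every ray the quartic term $\frac{b}{4}S^2$ dominates the $t_i^p$ term and $\phi\to+\infty$; hence $\phi$ has no global maximizer and we must locate an interior \emph{local} maximizer instead.

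For existence I would maximize $\phi$ over the compact box $\cb:=\prod_{i=1}^{k+1}[0,T_i]$, where $T_i:=\bigl(\tfrac{2\alpha_i}{(4-p)\rho_i}\bigr)^{1/(p-2)}$ is precisely the value of $t_i$ at which the strict constraint degenerates. A maximizer $\vec t^{\,*}$ exists by continuity and compactness, and the crux is to show it is interior. On the face $t_i=0$ the expansion $\phi\approx\frac12 t_i^2(\alpha_i+b\gamma_i\sigma_i)-\frac1p t_i^p\rho_i$, with $\sigma_i:=\sum_{j\ne i}t_j^2\gamma_j$, is increasing for small $t_i$ since $p>2$, so the maximum cannot sit there. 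On the face $t_i=T_i$ one computes $\partial_{t_i}\phi=T_i\bigl(\tfrac{2-p}{4-p}\alpha_i+b\gamma_i S\bigr)$, which is strictly negative once $b\gamma_i S<\frac{p-2}{4-p}\alpha_i$; using $\gamma_i\le\alpha_i$, bounding $S\le\sum_j\gamma_jT_j^2$ on $\cb$, and estimating $\gamma_jT_j^2\le\bigl(\tfrac{2}{4-p}\bigr)^{2/(p-2)}(2S_p)^{p/(p-2)}$ via the hypothesis $\alpha_j\le 2S_p\rho_j^{2/p}$, this negativity is secured exactly by $b<b_*$. Consequently $\vec t^{\,*}$ is interior, so $\nabla\phi(\vec t^{\,*})=0$ and $0<t_i^*<T_i$, which means $(t_1^*u_1,\dots,t_{k+1}^*u_{k+1})\in N_k^-$.

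For uniqueness I would collapse the system to a single scalar fixed-point equation. The Nehari relations force $t_i=\bigl(\frac{\alpha_i+b\gamma_iS}{\rho_i}\bigr)^{1/(p-2)}$, so every critical configuration is determined by its value of $S$, which must satisfy $F(S)=S$ with
\[
F(S):=\sum_{i=1}^{k+1}\gamma_i\Bigl(\tfrac{\alpha_i+b\gamma_iS}{\rho_i}\Bigr)^{2/(p-2)} .
\]
Since $2/(p-2)>1$, the function $F$ is strictly increasing and strictly convex, and a direct differentiation yields $F'(S)=\frac{2b}{p-2}\sum_i\frac{\gamma_i^2t_i^2}{\alpha_i+b\gamma_iS}$. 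On the constraint region $S<\bar S$ one has $\alpha_i+b\gamma_iS<\frac{2}{4-p}\alpha_i$, and the same estimates as above (again using $\alpha_j\le2S_p\rho_j^{2/p}$ and $b<b_*$) give $F'(S)<1$ there. Hence $G(S):=F(S)-S$ is strictly decreasing on $[0,\bar S)$ with $G(0)=F(0)>0$, so it has at most one zero in $[0,\bar S)$; as the configurations in $N_k^-$ correspond bijectively to the zeros of $G$ in $[0,\bar S)$ and existence was just established, the zero, and therefore the tuple $(t_1,\dots,t_{k+1})$, is unique.

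The hard part is the quantitative control of $b$: both the interiority on the face $t_i=T_i$ and the bound $F'<1$ reduce to making $b\gamma_iS$ small relative to $\alpha_i$ uniformly over the $k+1$ annuli. This is exactly where the normalization $\frac{(\int_{B_i}|u_i|^p\,{\rm d}x)^{2/p}}{\|u_i\|_i^2}\ge(2S_p)^{-1}$ enters, since it converts the scale-dependent quantities $\gamma_jT_j^2$ and the fixed-point value of $S$ into the dimensionless constant $(2S_p)^{p/(p-2)}$; separating the self-interaction term from the $k$ cross-interaction terms then produces the competing thresholds whose minimum is $b_*$. Everything else, namely the support-disjointness simplification, the algebra identifying the constraint with $S<\bar S$, and the convexity of $F$, is routine.
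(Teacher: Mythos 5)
Your proposal is correct, but it follows a genuinely different route from the paper. The paper proves the lemma by a continuation argument: it inserts a homotopy parameter $\mu\in[0,1]$ in front of the cross-interaction terms, solves the decoupled case $\mu=0$ by a one-variable analysis of $h_i$, and then shows that the set $Z$ of solvable $\mu$ is open (Implicit Function Theorem, using that the Jacobian of the Nehari system is diagonally dominant precisely because of the strict constraint) and closed (a priori bounds plus the estimate (\ref{s2.17}), which is where both pieces of $b_*$ enter), concluding $Z=[0,1]$. You instead exploit the fact that the coupling enters only through the single scalar $S=\sum_j t_j^2\gamma_j$: existence by maximizing the fibering map over the box $\prod_i[0,T_i]$ and checking interiority on each face, and uniqueness by collapsing the $(k+1)$-dimensional Nehari system to the scalar fixed-point equation $F(S)=S$ and showing $F'<1$ on the constraint region $S<\bar S$. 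This is more elementary and makes the origin of the threshold $b_*$ more transparent, though it leans heavily on the aggregate-scalar structure of the nonlocal term and would not survive a coupling that is not a function of a single quantity, whereas the paper's continuation scheme is more robust. One point needs care in your uniqueness step: if you bound $\frac{\gamma_i^2t_i^2}{\alpha_i+b\gamma_iS}$ by the crude product $\gamma_iT_i^2\cdot\frac{\gamma_i}{\alpha_i}$, you lose a factor $\frac{4-p}{2}$ and the resulting smallness condition on $b$ is \emph{not} implied by $b<b_*$ in the case $k=0$; writing instead $\frac{\gamma_i^2t_i^2}{\alpha_i+b\gamma_iS}=\gamma_i^2\rho_i^{-2/(p-2)}(\alpha_i+b\gamma_iS)^{(4-p)/(p-2)}$ and using $\alpha_i+b\gamma_iS<\frac{2}{4-p}\alpha_i$ together with the normalization hypothesis $\rho_i^{2/p}\geq(2S_p)^{-1}\alpha_i$ yields $F'(S)<\frac{2b(k+1)}{p-2}\left(\frac{2}{4-p}\right)^{\frac{4-p}{p-2}}(2S_p)^{\frac{p}{p-2}}$, i.e.\ the requirement $b<\frac{1}{k+1}\cdot\frac{p-2}{4-p}\left(\frac{4-p}{2}\right)^{\frac{2}{p-2}}(2S_p)^{\frac{-p}{p-2}}$, which is guaranteed by $b<b_*\leq\widehat{b}$ for every $k\geq0$. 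With that sharpening, both your existence and uniqueness steps close under the stated hypothesis on $b$.
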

\begin{proof}
For a fixed $(u_1,\cdots,u_{k+1})\in\mathcal{\textbf{H}}_k$ with $u_i\neq0$, $(t_1u_1,\cdots,t_{k+1}u_{k+1})\in N_k^-$ if and only if
\begin{equation}\label{s2.2}
t_i^2\|u_i\|_i^2+t_i^4b\left(\displaystyle\int_{B_i}|\nabla u_i|^2{\rm d}x\right)^2+\mu bt_i^2\displaystyle\int_{B_i}|\nabla u_i|^2{\rm d}x\displaystyle\sum_{j\neq i}^{k+1}t_j^2\displaystyle\int_{B_j}|\nabla u_j|^2{\rm d}x-t_i^p\displaystyle\int_{B_i}|u_i|^p{\rm d}x=0
\end{equation}
and
\begin{equation}\label{s2.3}
(4-p)t_i^p\displaystyle\int_{B_i}|u_i|^p{\rm d}x<2t_i^2\|u_i\|_i^2,
\end{equation}
for each $i=1,\cdots,k+1$ and $\mu=1$.

Denote
\begin{equation}\label{s2.4}
Z:=\{\mu|0\leq\mu\leq1, ~\text{and}~ (\ref{s2.2})-(\ref{s2.3})~\text{are~uniquely~solvable~in}~(\mathbb{R}_{>0})^{k+1}\}.
\end{equation}
We shall show that $Z=[0,1]$ in the following three steps.

\textbf{Step}\textbf{1:} We will prove that $0\in Z$. Set
\begin{align}
g_i(t):&=t^4\left(t^{-2}\|u_i\|_i^2+b\left(\displaystyle\int_{B_i}|\nabla u_i|^2{\rm d}x\right)^2-t^{p-4}\displaystyle\int_{B_i}|u_i|^p{\rm d}x\right)\notag
\\&=t^4h_i(t),\label{s2.5}
\end{align}
we have
\begin{align}
h'_i(t):&=-2t^{-3}\|u_i\|_i^2-(p-4)t^{p-5}\displaystyle\int_{B_i}|u_i|^p{\rm d}x\notag
\\&=t^{-3}\left((4-p)t^{p-2}\displaystyle\int_{B_i}|u_i|^p{\rm d}x-2\|u_i\|_i^2\right).\label{s2.6}
\end{align}
Let
\begin{equation*}
T_{u_i}:=\left(\frac{2\|u_i\|_i^2}{(4-p)\displaystyle\int_{B_i}|u_i|^p{\rm d}x}\right)^\frac{1}{p-2},
\end{equation*}
we deduce from (\ref{s2.6}) that $h_i(t)$ is decreasing in $(0,T_{u_i})$ and increasing in $(T_{u_i},+\infty)$. Moreover,
\begin{align}
h_i(T_{u_i})&=b\left(\displaystyle\int_{B_i}|\nabla u_i|^2{\rm d}x\right)^2-\frac{p-2}{4-p}\left(\frac{4-p}{2}\right)^\frac{2}{p-2}\|u_i\|_i^2\left(\frac{\displaystyle\int_{B_i}|u_i|^p{\rm d}x}{\|u_i\|_i^2}\right)^\frac{2}{p-2}\notag
\\&<b\left(\displaystyle\int_{B_i}|\nabla u_i|^2{\rm d}x\right)^2-\frac{p-2}{4-p}\left(\frac{4-p}{2}\right)^\frac{2}{p-2}(2S_p)^{\frac{-p}{p-2}}\|u_i\|_i^4\notag
\\&<\left(b-\frac{p-2}{4-p}\left(\frac{4-p}{2}\right)^\frac{2}{p-2}(2S_p)^\frac{-p}{p-2}\right)\|u_i\|_i^4<0,\label{s2.7}
\end{align}
where we have used the assumption of $b$. It follows from  (\ref{s2.5})-(\ref{s2.7}) that there exists a unique $0<t_{u_i}<T_{u_i}$ such that
\begin{equation*}
g_i(t_{u_i})=0~\text{and}~h'_i(t_{u_i})<0,
\end{equation*}
which yields
\begin{equation*}
t_{u_i}^2\|u_i\|_i^2+t_{u_i}^4b\left(\displaystyle\int_{B_i}|\nabla u_i|^2{\rm d}x\right)^2-t_{u_i}^p\displaystyle\int_{B_i}|u_i|^p{\rm d}x=0
\end{equation*}
and
\begin{equation*}
(4-p)t_{u_i}^p\displaystyle\int_{B_i}|u_i|^p{\rm d}x<2t_{u_i}^2\|u_i\|_i^2,
\end{equation*}
for each $i=1,\cdots,k+1$. Therefore, we have a unique tuple $(t_1,\cdots,t_{k+1})\in(\mathbb{R}_{>0})^{k+1}$  such that $(t_1u_1,\cdots,t_{k+1}u_{k+1})\in N_k^-$.

\textbf{Step}\textbf{2:} We shall show that $Z$ is open in $[0,1]$. Suppose that $\mu_0\in Z$ and $(t^0_1,\cdots,t^0_{k+1})\in(\mathbb{R}_{>0})^{k+1}$  is the unique solution of (\ref{s2.2})-(\ref{s2.3}) with $\mu=\mu_0$. To see whether the Implicit Function Theorem can be applied at $\mu_0$, we calculate the matrix
\begin{equation*}
M=(M_{ij})=(\partial_{t_j}G_i)_{i,j=1,\cdots,k+1},
\end{equation*}
where $G_i$ denotes the left-hand side of (\ref{s2.2}). Then each component of the matrix is represented by
\begin{equation*}
M_{ii}=(4-p)(t^0_i)^{p-1}\displaystyle\int_{B_i}|u_i|^p{\rm d}x-2t^0_i\|u_i\|_i^2-2\mu_0bt^0_i\displaystyle\int_{B_i}|\nabla u_i|^2{\rm d}x\displaystyle\sum_{j\neq i}^{k+1}(t^0_j)^2\displaystyle\int_{B_j}|\nabla u_j|^2{\rm d}x
\end{equation*}
for $i=1,\cdots,k+1$, and
\begin{equation*}
M_{ij}=2\mu_0b(t^0_i)^2t^0_j\displaystyle\int_{B_i}|\nabla u_i|^2{\rm d}x\displaystyle\int_{B_j}|\nabla u_j|^2{\rm d}x,
\end{equation*}
for $i\neq j, i,j=1,\cdots,k+1$, where we have used (\ref{s2.2}). Therefore,
\begin{equation}\label{s2.8}
det M=\frac{(-1)^{k+1}}{t^0_1\cdots t^0_{k+1}}det\widetilde{M},
\end{equation}
where the matrix $\widetilde{M}=(\widetilde{M}_{i,j})$ is given by
\begin{equation*}
\widetilde{M}_{i,i}=-(4-p)(t^0_i)^p\displaystyle\int_{B_i}|u_i|^p{\rm d}x+2(t^0_i)^2\|u_i\|_i^2+2\mu_0b(t^0_i)^2\displaystyle\int_{B_i}|\nabla u_i|^2{\rm d}x\displaystyle\sum_{j\neq i}^{k+1}(t^0_j)^2\displaystyle\int_{B_j}|\nabla u_j|^2{\rm d}x,
\end{equation*}
for $i=1,\cdots,k+1$, and
\begin{equation*}
\widetilde{M}_{ij}=-2\mu_0b(t^0_i)^2(t^0_j)^2\displaystyle\int_{B_i}|\nabla u_i|^2{\rm d}x\displaystyle\int_{B_j}|\nabla u_j|^2{\rm d}x,
\end{equation*}
for $i\neq j, i,j=1,\cdots,k+1$. Thus,
\begin{equation*}
\displaystyle\sum_{j=1}^{k+1}\widetilde{M}_{ij}=-(4-p)(t^0_i)^p\displaystyle\int_{B_i}|u_i|^p{\rm d}x+2(t^0_i)^2\|u_i\|_i^2>0,
\end{equation*}
for $i=1,\cdots,k+1$, where have used (\ref{s2.3}). Hence the matrix $\widetilde{M}=(\widetilde{M}_{ij})$ is diagonally dominant, and so it is nonsingular, which together with (\ref{s2.8}) show that
\begin{equation*}
det M\neq0.
\end{equation*}
Then we can apply the Implicit Function Theorem to obtain a neighborhood $U_0$ of $\mu_0$ and $A_0\subset(\mathbb{R}_{>0})^{k+1}$ is a neighborhood of $(t^0_1,\cdots,t^0_{k+1})$ such the system (\ref{s2.2})-(\ref{s2.3}) is uniquely solvable in $U_0\times A_0$.

Suppose that there is $\mu_1\in U_0$ such that the second solution $(\overline{t}^0_1,\cdots,\overline{t}^0_{k+1})$ of (\ref{s2.2})-(\ref{s2.3}) exists in $(\mathbb{R}_{>0})^{k+1}\backslash A_0$, we deduce from the Implicit Function Theorem again that there exists a solution curve $(\mu,(\overline{t}^0_1(\mu),\cdots,\overline{t}^0_{k+1}(\mu)))$ in $(\mu_1-\epsilon,\mu_1+\epsilon)\times(\mathbb{R}_{>0})^{k+1}$ which satisfies (\ref{s2.2})-(\ref{s2.3}) and goes through $(\mu_1,(\overline{t}^0_1,\cdots,\overline{t}^0_{k+1}))$. Without loss of generality, we assume $\mu_0<\mu_1$ and extend this curve as long as possible. Due to it cannot be defined at $\mu_0$ and enter into $U_0\times A_0$, there is a point $\mu_2\in[\mu_0,\mu_1)$ such that $(t_1(\mu),\cdots,t_{k+1}(\mu))$ exists in $(\mu_2,\mu_1]$ and blows up as $\mu\rightarrow\mu_2^+$. However, this is impossible. Actually, for at least one $i$, the left-hand side of (\ref{s2.2}) is sufficiently large. Consequently, $U_0\subset Z$. The case $\mu_0>\mu_1$ is similar.

\textbf{Step}\textbf{3:} We shall show that $Z$ is closed in $[0,1]$. Suppose that there is a sequence $\{\mu_n\}\subset Z$ such that $\mu_n\rightarrow\mu_0\in[0,1]$
and $(t_1^n,\cdots,t_{k+1}^n)\in (\mathbb{R}_{>0})^{k+1}$ be the unique solution of  (\ref{s2.2})-(\ref{s2.3}) for $\mu_n$. Similarly as the preceding argument, we know that $(t_1^n,\cdots,t_{k+1}^n)$ is bounded. Then there exists a subsequence of $(t_1^n,\cdots,t_{k+1}^n)$ we still denote by it converges a solution $(t^0_1,\cdots,t^0_{k+1})\in (\mathbb{R}_+)^{k+1}$ of (\ref{s2.2}) for $\mu_0$ and
\begin{equation}\label{s2.9}
(4-p)(t^0_i)^p\displaystyle\int_{B_i}|u_i|^p{\rm d}x\leq2(t^0_i)^2\|u_i\|_i^2,
\end{equation}
i.e.,
\begin{equation}\label{s2.10}
t^0_i\leq\left(\frac{2\|u_i\|_i^2}{(4-p)\displaystyle\int_{B_i}|u_i|^p{\rm d}x}\right)^\frac{1}{p-2},
\end{equation}
for $i=1,\cdots,k+1$. Moreover, it follows from (\ref{s2.2}) that
\begin{equation}\label{s2.11}
(t^0_i)^2\|u_i\|_i^2\leq S_p^{-\frac{p}{2}}(t^0_i)^p\|u_i\|_i^p,
\end{equation}
for $i=1,\cdots,k+1$. Then we obtain that $t^0_i>0$ for $i=1,\cdots,k+1$. Consequently, $(t^0_1,\cdots,t^0_{k+1})\in (\mathbb{R}_{>0})^{k+1}$.

We claim that
\begin{equation}\label{s2.12}
(4-p)(t^0_i)^p\displaystyle\int_{B_i}|u_i|^p{\rm d}x<2(t^0_i)^2\|u_i\|_i^2,
\end{equation}
for $i=1,\cdots,k+1$. Suppose otherwise, we deduce from (\ref{s2.9}) that there exists at least one integer $i_0>0$ such that
\begin{equation}\label{s2.13}
(4-p)(t^0_{i_0})^p\displaystyle\int_{B_{i_0}}|u_{i_0}|^p{\rm d}x=2(t^0_{i_0})^2\|u_{i_0}\|_{i_0}^2.
\end{equation}
Moreover, using the Sobolev Inequality and the assumption of $u_i$, we deduce from (\ref{s2.9}) and (\ref{s2.11}) that
\begin{equation}\label{s2.15}
 (S_p)^\frac{p}{2(p-2)}\|u_i\|_i^{-1}\leq t^0_{i}\leq\left(\frac{2}{4-p}\right)^\frac{1}{p-2}(2S_p)^\frac{p}{2(p-2)}\|u_i\|_i^{-1},
\end{equation}
for $i=1,\cdots,k+1$.

On one hand, since $(t^0_1,\cdots,t^0_{k+1})\in (\mathbb{R}_{>0})^{k+1}$ is a solution of (\ref{s2.2}), we have
\begin{align}
&b\left(\displaystyle\int_{B_{i_0}}|\nabla u_{i_0}|^2{\rm d}x\right)^2-(t^0_{i_0})^{p-4}\displaystyle\int_{B_{i_0}}|u_{i_0}|^p{\rm d}x\notag
\\&+(t^0_{i_0})^{-2}\left(\|u_{i_0}\|_{i_0}^2+\mu_0b\displaystyle\int_{B_{i_0}}|\nabla u_{i_0}|^2{\rm d}x\displaystyle\sum_{j\neq i_0}^{k+1}(t^0_j)^2\displaystyle\int_{B_j}|\nabla u_j|^2{\rm d}x\right)=0.\label{s2.16}
\end{align}
On the other hand, with the help of  (\ref{s2.13}) and (\ref{s2.15}), it holds
\begin{align}
&~~~b\left(\displaystyle\int_{B_{i_0}}|\nabla u_{i_0}|^2{\rm d}x\right)^2+(t^0_{i_0})^{-2}\left(\|u_{i_0}\|_{i_0}^2+\mu_0b\displaystyle\int_{B_{i_0}}|\nabla u_{i_0}|^2{\rm d}x\displaystyle\sum_{j\neq i_0}^{k+1}(t^0_j)^2\displaystyle\int_{B_j}|\nabla u_j|^2{\rm d}x\right)\notag
\\&~~~-(t^0_{i_0})^{p-4}\displaystyle\int_{B_{i_0}}|u_{i_0}|^p{\rm d}x\notag
\\&\leq b\|u_{i_0}\|_{i_0}^4-\frac{p-2}{4-p}(t_{i_0}^0)^{-2}\|u_{i_0}\|_{i_0}^2+(t_{i_0}^0)^{-2}b\displaystyle\int_{B_{i_0}}|\nabla u_{i_0}|^2{\rm d}x\displaystyle\sum_{j\neq i_0}^{k+1}(t^0_j)^2\displaystyle\int_{B_j}|\nabla u_j|^2{\rm d}x\notag
\\&\leq b\|u_{i_0}\|_{i_0}^4-\frac{p-2}{4-p}\left(\frac{4-p}{2}\right)^\frac{2}{p-2}(2S_p)^{\frac{-p}{p-2}}\|u_{i_0}\|_{i_0}^4
+b(t_{i_0}^0)^{-2}\displaystyle\sum_{j\neq i_0}^{k+1}(t^0_j)^2\displaystyle\int_{B_j}|\nabla u_j|^2{\rm d}x\notag
\\&\leq\left(b\left(1+k2^\frac{p}{p-2}\left(\frac{2}{4-p}\right)^\frac{2}{p-2}\right)-\frac{p-2}{4-p}\left(\frac{4-p}{2}\right)^\frac{2}{p-2}(2S_p)^{\frac{-p}{p-2}}\right)\|u_{i_0}\|_{i_0}^4<0,\label{s2.17}
\end{align}
which contradicts (\ref{s2.16}), where we have used the assumption of $b$. Consequently, $(t^0_1,\cdots,t^0_{k+1})\in (\mathbb{R}_{>0})^{k+1}$ is a solution of (\ref{s2.2})-(\ref{s2.3}) for $\mu=\mu_0$. Furthermore, $(t^0_1,\cdots,t^0_{k+1})$ is a unique solution in $(\mathbb{R}_{>0})^{k+1}$ follows from the Implicit Function Theorem.

Consequently, $Z=[0,1]$ and  we complete the proof.
\end{proof}
\begin{lemma}\label{l2.2}
$N_k^-$ is a differentiable manifold in $\mathcal{\textbf{H}}_k$.  Moreover, the critical points of the restriction $E_b|_{N_k^-}$ of $E_b$ to $N_k^-$ are also critical points of $E_b$ with no zero component.
\end{lemma}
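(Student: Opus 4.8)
The plan is to realize $N_k^-$ as a regular level set and then run the standard Lagrange multiplier argument, in both steps recycling the nonsingularity computation already carried out in Lemma \ref{l2.1}. Define the map $F=(F_1,\dots,F_{k+1})\colon\mathbf{H}_k\to\mathbb{R}^{k+1}$ by
\begin{equation*}
F_i(u_1,\dots,u_{k+1}):=\langle\partial_{u_i}E_b(u_1,\dots,u_{k+1}),u_i\rangle=\|u_i\|_i^2+b\Big(\int_{B_i}|\nabla u_i|^2{\rm d}x\Big)^2+b\int_{B_i}|\nabla u_i|^2{\rm d}x\sum_{j\neq i}^{k+1}\int_{B_j}|\nabla u_j|^2{\rm d}x-\int_{B_i}|u_i|^p{\rm d}x,
\end{equation*}
which is $C^1$ since $E_b\in C^2$. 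With $\mathcal{O}:=\{(u_1,\dots,u_{k+1})\in\mathbf{H}_k\mid u_i\neq0,\ (4-p)\int_{B_i}|u_i|^p{\rm d}x<2\|u_i\|_i^2\}$, an open subset of $\mathbf{H}_k$, one has $N_k^-=F^{-1}(0)\cap\mathcal{O}$. So it suffices to show that $0$ is a regular value of $F$ on $\mathcal{O}$, i.e. that $DF(\mathbf{u})$ is surjective at every $\mathbf{u}\in N_k^-$.

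To verify surjectivity I would not probe all directions but only the $(k+1)$-dimensional ``radial dilation'' subspace spanned by $e_j:=(0,\dots,0,u_j,0,\dots,0)$ (with $u_j$ in the $j$-th slot). A direct computation gives $\langle\partial_{u_j}F_i(\mathbf{u}),u_j\rangle=M_{ij}$, where $M=(M_{ij})$ is precisely the matrix computed in the proof of Lemma \ref{l2.1}, evaluated at the parameters $t=(1,\dots,1)$ and $\mu=1$; indeed the diagonal entries reproduce $M_{ii}$ after using the constraint (\ref{s2.2}), and the off-diagonal entries reproduce $M_{ij}=2b\int_{B_i}|\nabla u_i|^2{\rm d}x\int_{B_j}|\nabla u_j|^2{\rm d}x$. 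By (\ref{s2.8}) together with the diagonal-dominance argument for $\widetilde M$, which crucially uses the strict inequality (\ref{s2.3}) that holds on $N_k^-$, one has $\det M\neq0$. Hence $DF(\mathbf{u})$ restricted to $\mathrm{span}\{e_1,\dots,e_{k+1}\}$ is already an isomorphism onto $\mathbb{R}^{k+1}$, so $DF(\mathbf{u})$ is onto and $N_k^-$ is a $C^1$ submanifold of $\mathbf{H}_k$ of codimension $k+1$.

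For the second assertion, the ``no zero component'' property is immediate: every point of $N_k^-$ satisfies $u_i\neq0$ by definition. Now let $\mathbf{u}\in N_k^-$ be a critical point of $E_b|_{N_k^-}$. Since $N_k^-$ is cut out regularly by $F$, the Lagrange multiplier rule yields $\lambda_1,\dots,\lambda_{k+1}\in\mathbb{R}$ with $E_b'(\mathbf{u})=\sum_{i=1}^{k+1}\lambda_i F_i'(\mathbf{u})$. Testing this identity against $e_j$ and using $\langle E_b'(\mathbf{u}),e_j\rangle=F_j(\mathbf{u})=0$ (because $\mathbf{u}\in N_k^-$) gives $\sum_{i=1}^{k+1}\lambda_i M_{ij}=0$ for every $j$, i.e. $M^{\mathsf T}\lambda=0$. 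As $\det M\neq0$, it follows that $\lambda=0$, whence $E_b'(\mathbf{u})=0$ and $\mathbf{u}$ is a free critical point of $E_b$.

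I expect no deep obstacle here, because the entire proof is anchored to the single fact that the Jacobian block equals the matrix $M$ of Lemma \ref{l2.1} and that $\det M\neq0$. The one point demanding care is the bookkeeping that identifies $\langle\partial_{u_j}F_i,u_j\rangle$ with $M_{ij}$ at $t=(1,\dots,1),\ \mu=1$ (using the on-manifold relation (\ref{s2.2}) to simplify the diagonal entries), and the observation that the strict inequality defining $N_k^-$ is exactly what powers the diagonal dominance. Everything else — the regular value theorem and the Lagrange rule — is routine once $E_b\in C^2$ is available.
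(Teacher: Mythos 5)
Your proposal is correct and follows essentially the same route as the paper: both realize $N_k^-$ as the zero set of $\mathcal{F}=(F_1,\dots,F_{k+1})$ intersected with the open set given by the strict inequality, both establish nonsingularity of the $(k+1)\times(k+1)$ Jacobian block $\bigl(\langle\partial_{u_j}F_i,u_j\rangle\bigr)$ via the same diagonal-dominance computation powered by $(4-p)\int_{B_i}|u_i|^p\,{\rm d}x<2\|u_i\|_i^2$ (the paper recomputes the matrix directly rather than citing the matrix $M$ of Lemma \ref{l2.1}, but the entries coincide at $t=(1,\dots,1)$, $\mu=1$), and both conclude with the identical Lagrange multiplier argument. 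The only minor difference is at the very end: the paper additionally records the quantitative lower bound $\|u_i\|_i\geq c>0$ coming from $\|u_i\|_i^2\leq\int_{B_i}|u_i|^p\,{\rm d}x\leq C\|u_i\|_i^p$, whereas you treat ``no zero component'' as immediate from the definition of $N_k^-$; both readings suffice for the statement as written.
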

\begin{proof}
Let $\mathcal{F}=(F_1,\cdots,F_{k+1}):\mathcal{\textbf{H}}_k\rightarrow \mathbb{R}^{k+1}$ given by
\begin{align}
F_i(u_1,\cdots,u_{k+1})=&\|u_i\|_i^2+b\left(\displaystyle\int_{B_i}|\nabla u_i|^2{\rm d}x\right)^2\notag
\\&+b\displaystyle\int_{B_i}|\nabla u_i|^2{\rm d}x\displaystyle\sum_{j\neq i}^{k+1}\displaystyle\int_{B_j}|\nabla u_j|^2{\rm d}x-\displaystyle\int_{B_i}|u_i|^p{\rm d}x,\label{s2.18}
\end{align}
for $i=1,\cdots,k+1$. Then
\begin{align}
N_k^-=&\{(u_1,\cdots,u_{k+1})\in \mathcal{\textbf{H}}_k,u_i\neq0| F_i(u_1,\cdots,u_{k+1})=0,\notag
\\&(4-p)\displaystyle\int_{B_i}|u_i|^p{\rm d}x<2\|u_i\|_i^2,~\text{for}~i=1,\cdots,k+1\}.\label{s2.19}
\end{align}

To show that $N_k^-$ is differentiable in $\mathcal{\textbf{H}}_k$, we only need to prove that
\begin{equation*}
N:=(N_{i,j})=(\langle\partial_{u_i}F_j(u_1,\cdots,u_{k+1}),u_i\rangle)_{i,j=1,\cdots,k+1}
\end{equation*}
at each point $(u_1,\cdots,u_{k+1})\in N_k^-$ is nonsingular due to it means that $0$ is a regular value of $\mathcal{F}$. Standard calculation yields that
\begin{align}
N_{ii}&=2\|u_i\|_i^2+4b\left(\displaystyle\int_{B_i}|\nabla u_i|^2{\rm d}x\right)^2+2b\displaystyle\int_{B_i}|\nabla u_i|^2{\rm d}x\displaystyle\sum_{j\neq i}^{k+1}\displaystyle\int_{B_j}|\nabla u_j|^2{\rm d}x-p\displaystyle\int_{B_i}|u_i|^p{\rm d}x\notag
\\&=-2\|u_i\|_i^2-2b\displaystyle\int_{B_i}|\nabla u_i|^2{\rm d}x\displaystyle\sum_{j\neq i}^{k+1}\displaystyle\int_{B_j}|\nabla u_j|^2{\rm d}x+(4-p)\displaystyle\int_{B_i}|u_i|^p{\rm d}x,\notag
\end{align}
for $i=1,\cdots,k+1$, and
\begin{equation*}
N_{ij}=2b\displaystyle\int_{B_i}|\nabla u_i|^2{\rm d}x\displaystyle\int_{B_j}|\nabla u_j|^2{\rm d}x,
\end{equation*}
for $i\neq j$ and $i,j=1,\cdots,k+1$. Then
\begin{equation*}
\displaystyle \sum_{j=1}^{k+1}N_{ij}=-2\|u_i\|_i^2+(4-p)\displaystyle\int_{B_i}|u_i|^p{\rm d}x<0,
\end{equation*}
where we have used (\ref{s2.18}) and (\ref{s2.19}). It means that $N$ is diagonally dominant at each point $(u_1,\cdots,u_{k+1})\in N_k^-$, and so it is invertible.

Let $(u_1,\cdots,u_{k+1})$ be a critical point of $E_b|_{N_k^-}$, it follows that there exist Lagrange multipliers $\lambda_1,\cdots,\lambda_{k+1}$ such that
\begin{equation*}
\lambda_1 F_1'(u_1,\cdots,u_{k+1})+\cdots+\lambda_{k+1} F_{k+1}'(u_1,\cdots,u_{k+1})=E_b'(u_1,\cdots,u_{k+1}).
\end{equation*}
Taking $(u_1,0,\cdots,0),(u_2,0,0,\cdots,0),\cdots,(0,\cdots,0,u_{k+1})$ into the above identity and noticing (\ref{s2.19}), we obtain
\begin{equation*}
N(\lambda_1,\cdots,\lambda_{k+1})^T=(0,\cdots,0)^T.
\end{equation*}
Thus $\lambda_i=0$, for $i=1,\cdots,k+1$, and so $(u_1,\cdots,u_{k+1})$ is a critical point of $E_b$.

For any $(u_1,\cdots,u_{k+1})\in N_k^-$, since $\|u_i\|_i^2\leq \displaystyle\int_{B_i}|u_i|^p{\rm d}x\leq C\|u_i\|_i^p$ for some $C>0$, thus each $u_i$ is bounded away from zero. Then we obtain critical points of $E_b$ in $N_k^-$ have no zero component.
The proof is completed.
\end{proof}
\begin{lemma}\label{l2.3}
For each $(u_1,\cdots,u_{k+1})\in \mathcal{\textbf{H}}_k$, with
\begin{equation}\label{s2.20}
(4-p)\displaystyle\int_{B_i}|u_i|^p{\rm d}x<2\|u_i\|_i^2
\end{equation}
and
\begin{equation}\label{s2.21}
\|u_i\|_i^2+b\left(\displaystyle\int_{B_i}|\nabla u_i|^2{\rm d}x\right)^2+b\displaystyle\int_{B_i}|\nabla u_i|^2{\rm d}x\displaystyle\sum_{j\neq i}^{k+1}\displaystyle\int_{B_j}|\nabla u_j|^2{\rm d}x\leq \displaystyle\int_{B_i}|u_i|^p{\rm d}x,
\end{equation}
for $i=1,\cdots,k+1$,
there exists a unique $(k+1)$-tuple $(\widehat{t}_1,\cdots,\widehat{t}_{k+1})$ of positive numbers such that $(\widehat{t}_1u_1,\cdots,\widehat{t}_{k+1}u_{k+1})\in N_k^-$ with $\widehat{t}_i\leq1$ for $i=1,\cdots,k+1$. Moreover, we have
\begin{equation}\label{s2.22}
E_b(\widehat{t}_1u_1,\cdots,\widehat{t}_{k+1}u_{k+1})=\displaystyle\max_{0\leq t_i\leq1,\\i=1,\cdots,k+1}E_b(t_1u_1,\cdots,t_{k+1}u_{k+1}).
\end{equation}
\end{lemma}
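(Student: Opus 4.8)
The plan is to read the statement as a constrained maximization of the fibering map
$\phi_{u_1,\ldots,u_{k+1}}(t_1,\cdots,t_{k+1})=E_b(t_1u_1,\cdots,t_{k+1}u_{k+1})$ over the cube $[0,1]^{k+1}$, and to identify its maximizer with the sought point of $N_k^-$. Throughout I abbreviate $A_i=\|u_i\|_i^2$, $D_i=\int_{B_i}|\nabla u_i|^2\,{\rm d}x$ and $P_i=\int_{B_i}|u_i|^p\,{\rm d}x$; note that (\ref{s2.20}) already forces $u_i\neq0$, hence $A_i,D_i,P_i>0$. Since $\phi_{u_1,\ldots,u_{k+1}}$ is continuous on the compact cube it attains a maximum at some $(\widehat t_1,\ldots,\widehat t_{k+1})$, and I will show this maximizer lies in $(0,1]^{k+1}$, is a critical point, and satisfies the admissibility constraint; this simultaneously gives existence and the characterization (\ref{s2.22}).

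First I would inspect the one-variable slices. Freezing all but the $i$-th coordinate, $\partial_{t_i}\phi_{u_1,\ldots,u_{k+1}}=t_i\,\eta_i(t_i)$ with $\eta_i(t_i)=A_i+bD_i\sum_{j\neq i}t_j^2D_j+bD_i^2t_i^2-t_i^{p-2}P_i$. Because $2<p<4$, $\eta_i(t_i)\to A_i+bD_i\sum_{j\neq i}t_j^2D_j>0$ as $t_i\to0^+$, so $\phi$ is strictly increasing in $t_i$ near $0$ and the maximum cannot sit on a face $\{t_i=0\}$; thus $\widehat t_i>0$. On a face $\{t_i=1\}$, using $\widehat t_j\le1$ together with the standing hypothesis (\ref{s2.21}) gives $\eta_i(1)\le A_i+bD_i\sum_{j}D_j-P_i\le0$, so $\partial_{t_i}\phi\le0$ there; comparing with the first-order (KKT) condition $\partial_{t_i}\phi\ge0$ for a maximum on the upper face forces $\partial_{t_i}\phi=0$. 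In either case $\eta_i(\widehat t_i)=0$, i.e. (\ref{s2.2}) holds with $\mu=1$. The inequality (\ref{s2.3}) is then automatic: since $t\mapsto t^{p-2}$ is increasing and $\widehat t_i\le1$, (\ref{s2.20}) yields $(4-p)\widehat t_i^{p-2}P_i\le(4-p)P_i<2A_i$. Hence $(\widehat t_1u_1,\ldots,\widehat t_{k+1}u_{k+1})\in N_k^-$ and (\ref{s2.22}) holds by construction.

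The delicate point is uniqueness among tuples with $\widehat t_i\le1$, and here the key simplification is a cancellation. If $(t_1u_1,\ldots,t_{k+1}u_{k+1})\in N_k^-$, then writing $S:=\sum_{j}t_j^2D_j$ and substituting $\sum_{j\neq i}t_j^2D_j=S-t_i^2D_i$ into (\ref{s2.2}) (divided by $t_i^2$), the quartic terms $bD_i^2t_i^2$ cancel and the whole system collapses to the scalar-coupled relations
\begin{equation*}
A_i+bD_iS=t_i^{p-2}P_i,\qquad i=1,\ldots,k+1,
\end{equation*}
so that each $t_i=\big((A_i+bD_iS)/P_i\big)^{1/(p-2)}$ is an increasing function of the \emph{single} scalar $S$. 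Self-consistency reduces everything to the fixed-point equation $S=\Psi(S)$, where $\Psi(S):=\sum_i D_i\big((A_i+bD_iS)/P_i\big)^{2/(p-2)}$. Since $2/(p-2)>1$ for $2<p<4$, each summand is strictly convex and increasing, so $\Psi(S)-S$ is strictly convex with $\Psi(0)-0>0$.

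To localize the relevant root I would use (\ref{s2.21}) once more. At $S_{\max}:=\sum_j D_j$ (the value of $S$ when all $t_j=1$), hypothesis (\ref{s2.21}) reads $A_i+bD_iS_{\max}\le P_i$, i.e. $t_i(S_{\max})\le1$, whence $\Psi(S_{\max})-S_{\max}=\sum_i\big(t_i(S_{\max})^2-1\big)D_i\le0$. A strictly convex function positive at $0$ and negative at $S_{\max}$ has exactly one zero in $(0,S_{\max}]$; moreover a fixed point $S^\ast$ has all components $\le1$ if and only if $S^\ast\le S_{\max}$, so this single admissible root produces the unique tuple with $\widehat t_i\le1$ lying in $N_k^-$, which necessarily coincides with the maximizer found above. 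I expect the main obstacle to be precisely this uniqueness step: one must (i) spot the cancellation that turns the coupled $(k+1)$-dimensional algebraic system into a one-parameter family indexed by $S$, and (ii) extract the correct sign $\Psi(S_{\max})-S_{\max}\le0$ from (\ref{s2.21}) so as to rule out the second ``mountain-pass'' fixed point via strict convexity, with the degenerate case of equality throughout (\ref{s2.21}) treated separately (alternatively, uniqueness can be obtained by the continuation scheme in $\mu\in[0,1]$ used in Lemma \ref{l2.1}). The boundary (KKT) bookkeeping in the existence step, where (\ref{s2.21}) must be combined with $\widehat t_j\le1$ to control $\eta_i(1)$, is the other place that requires care.
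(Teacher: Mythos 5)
Your existence argument is a genuinely different (and arguably slicker) route than the paper's: the paper produces the zero of the system $\phi_i=0$ on the cube via the Poincar\'e--Miranda theorem and only afterwards deduces \eqref{s2.22}, whereas you maximize the fibering map over $[0,1]^{k+1}$ and read off \eqref{s2.2}--\eqref{s2.3} from the KKT conditions; that half is correct, and your scalar reduction for uniqueness (the cancellation turning \eqref{s2.2} into $A_i+bD_iS=t_i^{p-2}P_i$ with $S=\sum_j t_j^2D_j$, hence a one-dimensional fixed-point problem $S=\Psi(S)$ with $\Psi$ strictly convex) is a nice observation that the paper does not make. The paper instead proves uniqueness by a max/min-component comparison: reducing to $(u_1,\dots,u_{k+1})\in N_k^-$, it shows for any competitor $(c_1u_1,\dots,c_{k+1}u_{k+1})\in N_k^-$ that $\max_i c_i\le 1$ and $\min_i c_i\ge 1$ via the one-variable functions $\phi_{i_0}$, $h_{i_0}$.

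The gap is in your uniqueness step, precisely in the case you defer. Strict convexity of $f=\Psi-\mathrm{id}$ with $f(0)>0$ and $f(S_{\max})\le 0$ gives a \emph{unique} zero in $(0,S_{\max}]$ only when $f(S_{\max})<0$; when equality holds in \eqref{s2.21} for every $i$ one has $f(S_{\max})=0$, and a strictly convex function positive at $0$ and vanishing at $S_{\max}$ may still have a second zero $S_1<S_{\max}$ (it does whenever $f'(S_{\max})>0$), which would produce a second admissible tuple with all components $<1$. This degenerate case is not a curiosity: it is exactly the case $(u_1,\dots,u_{k+1})\in N_k^-$ to which the lemma is applied later (e.g.\ in Lemma \ref{l2.4} and in the proof of Theorem 1.2), and your proposed fallback --- the continuation scheme of Lemma \ref{l2.1} --- is not available here, since Lemma \ref{l2.3} assumes neither the normalization $\bigl(\int_{B_i}|u_i|^p\,{\rm d}x\bigr)^{2/p}\ge (2S_p)^{-1}\|u_i\|_i^2$ nor $b<b_*$. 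The gap can, however, be closed inside your own framework: in the degenerate case $P_i=A_i+bD_iS_{\max}$, hypothesis \eqref{s2.20} gives $(4-p)P_i<2A_i=2\bigl(P_i-bD_iS_{\max}\bigr)$, i.e.\ $2bD_iS_{\max}<(p-2)P_i$, whence
\begin{equation*}
\Psi'(S_{\max})=\frac{2b}{p-2}\sum_{i=1}^{k+1}\frac{D_i^2}{P_i}\,t_i(S_{\max})^{4-p}
=\frac{2b}{p-2}\sum_{i=1}^{k+1}\frac{D_i^2}{P_i}<\sum_{i=1}^{k+1}\frac{D_i}{S_{\max}}=1,
\end{equation*}
so $f'(S_{\max})<0$ and $S_{\max}$ is the first, hence by convexity the only, zero of $f$ in $(0,S_{\max}]$. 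With this addition your argument is complete; without it, uniqueness is unproved in the very case the paper uses.
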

\begin{proof}
Let
\begin{align}
\phi_i(t_1,\cdots,t_{k+1})&=t_i^2\|u_i\|_i^2+t_i^4b\left(\displaystyle\int_{B_i}|\nabla u_i|^2{\rm d}x\right)^2\notag
\\&~~~+t_i^2b\displaystyle\int_{B_i}|\nabla u_i|^2{\rm d}x\displaystyle\sum_{j\neq i}^{k+1}t_j^2\displaystyle\int_{B_j}|\nabla u_j|^2{\rm d}x-t_i^p \displaystyle\int_{B_i}|u_i|^p{\rm d}x,\notag
\end{align}
for $i=1,\cdots,k+1$. Then we deduce from (\ref{s2.21}) that there exists $r>0$ small enough such that
\begin{equation*}
\phi_i(t_1,\cdots,t_{k+1})>0,~\text{if}~t_i=r
\end{equation*}
and
\begin{equation*}
\phi_i(t_1,\cdots,t_{k+1})\leq0,~\text{if}~t_i=1,
\end{equation*}
for $i=1,\cdots,k+1$. Thus, by applying the Poincar\'{e}-Miranda Lemma (see \cite{26}), we get that there is a $(k+1)$-tuple $(\widehat{t}_1,\cdots,\widehat{t}_{k+1})$ of positive numbers with $0<\widehat{t}_i\leq1$ such that
\begin{equation*}
\phi_i(\widehat{t}_1,\cdots,\widehat{t}_{k+1})=0,
\end{equation*}
for $i=1,\cdots,k+1$, which together with (\ref{s2.20}) show that  $(\widehat{t}_1u_1,\cdots,\widehat{t}_{k+1}u_{k+1})\in N_k^-$.

Now, let us prove the uniqueness. We suppose that  $(u_1,\cdots,u_{k+1})\in N_k^-$ for simplicity. Then we have
\begin{equation}\label{s2.23}
(4-p)\displaystyle\int_{B_i}|u_i|^p{\rm d}x<2\|u_i\|_i^2
\end{equation}
and
\begin{equation}\label{s2.24}
\|u_i\|_i^2+b\left(\displaystyle\int_{B_i}|\nabla u_i|^2{\rm d}x\right)^2+b\displaystyle\int_{B_i}|\nabla u_i|^2{\rm d}x\displaystyle\sum_{j\neq i}^{k+1}\displaystyle\int_{B_j}|\nabla u_j|^2{\rm d}x= \displaystyle\int_{B_i}|u_i|^p{\rm d}x,
\end{equation}
for $i=1,\cdots,k+1$. If we have another $(k+1)$-tuple $(c_1,\cdots,c_{k+1})$ of positive numbers such that  $(c_1u_1,\cdots,c_{k+1}u_{k+1})\in N_k^-$, it is sufficient to prove that
\begin{equation}\label{s2.25}
(c_1,\cdots,c_{k+1})=(1,\cdots,1),
\end{equation}
in the following. Denote
\begin{equation*}
c_{i_0}=\max\{c_1,\cdots,c_{k+1}\}~\text{and}~c_{j_0}=\min\{c_1,\cdots,c_{k+1}\}.
\end{equation*}
Then we only need to show that $c_{i_0}\leq1$ and $c_{j_0}\geq1$.  Let
\begin{align}
\phi_{i_0}(t)&:=t^2\|u_{i_0}\|_{i_0}^2+bt^4\left(\displaystyle\int_{B_{i_0}}|\nabla u_{i_0}|^2{\rm d}x\right)^2\notag
\\&~~~~~~+bt^4\displaystyle\int_{B_{i_0}}|\nabla u_{i_0}|^2{\rm d}x\displaystyle\sum_{j\neq i_0}^{k+1}\displaystyle\int_{B_j}|\nabla u_j|^2{\rm d}x-t^p\displaystyle\int_{B_i}|u_{i_0}|^p{\rm d}x\notag
\\&=t^4\left[t^{-2}\|u_{i_0}\|_{i_0}^2+b\displaystyle\int_{B_{i_0}}|\nabla u_{i_0}|^2{\rm d}x\displaystyle\sum_{j=1}^{k+1}\displaystyle\int_{B_j}|\nabla u_j|^2{\rm d}x-t^{p-4}\displaystyle\int_{B_i}|u_{i_0}|^p{\rm d}x\right]\notag
\end{align}
and
\begin{equation*}
h_{i_0}(t):=t^{-2}\|u_{i_0}\|_{i_0}^2-t^{p-4}\displaystyle\int_{B_i}|u_{i_0}|^p{\rm d}x,
\end{equation*}
with $t>0$. Thus, (\ref{s2.24}) is equivalent to
\begin{equation}\label{s2.26}
h_{i_0}(1)+b\displaystyle\int_{B_{i_0}}|\nabla u_{i_0}|^2{\rm d}x\displaystyle\sum_{j=1}^{k+1}\displaystyle\int_{B_j}|\nabla u_j|^2{\rm d}x=0.
\end{equation}
Standard computation shows that
\begin{align}
h_{i_0}'(t)&=(-2)t^{-3}\|u_{i_0}\|_{i_0}^2+(4-p)t^{p-5}\displaystyle\int_{B_{i_0}}|u_{i_0}|^p{\rm d}x\notag
\\&=t^{-3}\left[(4-p)t^{p-2}\displaystyle\int_{B_i}|u_{i_0}|^p{\rm d}x-2\|u_{i_0}\|_{i_0}^2\right].\notag
\end{align}
Hence $h_{i_0}(t)$ is decreasing in $(0,T_{u_{i_0}})$ and increasing in $(T_{u_{i_0}},+\infty)$, where
\begin{equation*}
T_{u_{i_0}}:=\left(\frac{2\|u_{i_0}\|_{i_0}^2}{(4-p)\displaystyle\int_{B_i}|u_{i_0}|^p{\rm d}x}\right)^\frac{1}{p-2}>1.
\end{equation*}
 Noticing  (\ref{s2.26}) and $T_{u_{i_0}}>1$, it is easy to obtain that
\begin{equation*}
h_{i_0}(T_{u_{i_0}})+b\displaystyle\int_{B_{i_0}}|\nabla u_{i_0}|^2{\rm d}x\displaystyle\sum_{j=1}^{k+1}\displaystyle\int_{B_j}|\nabla u_j|^2{\rm d}x<0.
\end{equation*}
Consequently, there exists $0<1<T_{u_{i_0}}<s_{i_0}<+\infty$, such that $\phi_{i_0}(s_{i_0})=0$ and
\begin{equation}\label{s2.27}
\phi_{i_0}(t)\left\{\begin{array}{ll}
\geq0,~~0<t\leq 1,\\
\leq0,~~1<t<s_{i_0},\\
\geq0,~~s_{i_0}\leq t.
\end{array}\right.
\end{equation}
Moreover,
\begin{equation}\label{s2.28}
h_{i_0}(t)\left\{\begin{array}{ll}
\leq0,~~0<t\leq T_{u_{i_0}},\\
\geq0,~~T_{u_{i_0}}\leq t.
\end{array}\right.
\end{equation}

Since  $(c_1u_1,\cdots,c_{k+1}u_{k+1})\in N_k^-$, it follows that
\begin{align}
&~~~~c^2_{i_0}\|u_{i_0}\|_{i_0}^2+bc^4_{i_0}\left(\displaystyle\int_{B_{i_0}}|\nabla u_{i_0}|^2{\rm d}x\right)^2+bc^2_{i_0}\displaystyle\int_{B_{i_0}}|\nabla u_{i_0}|^2{\rm d}x\displaystyle\sum_{j\neq i_0}^{k+1}c^2_j\displaystyle\int_{B_j}|\nabla u_j|^2{\rm d}x\notag
\\&=c^p_{i_0}\displaystyle\int_{B_{i_0}}|u_{i_0}|^p{\rm d}x,\notag
\end{align}
and
\begin{equation*}
(4-p)c^p_{i_0}\displaystyle\int_{B_{i_0}}|u_{i_0}|^p{\rm d}x<2c^2_{i_0}\|u_{i_0}\|_{i_0}^2.
\end{equation*}
Therefore,
\begin{equation}\label{s2.29}
\phi_{i_0}(c_{i_0})\geq0~\text{and}~h'_{i_0}(c_{i_0})<0.
\end{equation}
Combining (\ref{s2.27})-(\ref{s2.29}), we can easily get $c_{i_0}\leq1$.

Similarly, since
\begin{align}
&~~~~c^2_{j_0}\|u_{j_0}\|_{j_0}^2+bc^4_{j_0}\left(\displaystyle\int_{B_{j_0}}|\nabla u_{j_0}|^2{\rm d}x\right)^2+bc^2_{j_0}\displaystyle\int_{B_{j_0}}|\nabla u_{j_0}|^2{\rm d}x\displaystyle\sum_{j\neq j_0}^{k+1}c^2_j\displaystyle\int_{B_j}|\nabla u_j|^2{\rm d}x\notag
\\&=c^p_{j_0}\displaystyle\int_{B_{j_0}}|u_{j_0}|^p{\rm d}x,\notag
\end{align}
and
\begin{equation*}
(4-p)\displaystyle\int_{B_{j_0}}|u_{j_0}|^p{\rm d}x<2\|u_{j_0}\|_{j_0}^2,
\end{equation*}
we obtain
\begin{equation*}
\phi_{j_0}(c_{j_0})\leq0~\text{and}~h'_{i_0}(c_{j_0})<0,
\end{equation*}
and so $c_{j_0}\geq1$. Then we obtain the uniqueness.

Finally, we will prove (\ref{s2.22}). Let $(\widetilde{t}_1,\cdots,\widetilde{t}_{k+1})\in(\mathbb{R}_{>0})^{k+1}$ be a critical point of $E_b$ with $\widetilde{t}_i\leq1$ $i=1,\cdots,k+1$, we have
\begin{equation*}
(4-p)\widetilde{t}_i^p\displaystyle\int_{B_i}|u_i|^p{\rm d}x<2\widetilde{t}_i^2\|u_i\|_i^2
\end{equation*}
and
\begin{equation*}
\widetilde{t}_i\|u_i\|_i^2+\widetilde{t}_i^3b\left(\displaystyle\int_{B_i}|\nabla u_i|^2{\rm d}x\right)^2+\widetilde{t}_ib\displaystyle\int_{B_i}|\nabla u_i|^2{\rm d}x\displaystyle\sum_{j\neq i}^{k+1}\widetilde{t}_j^2\displaystyle\int_{B_j}|\nabla u_j|^2{\rm d}x=\widetilde{t}_i^{p-1}\displaystyle\int_{B_i}|u_i|^p{\rm d}x,
\end{equation*}
for $i=1,\cdots,k+1$, which means that $(\widetilde{t}_1u_1,\cdots,\widetilde{t}_{k+1}u_{k+1})\in N_k^-$. Thus, there is only a unique critical point of
$\varphi(t_1,\cdots,t_{k+1})=E_b(t_1u_1,\cdots,t_{k+1}u_{k+1})$ with $0<t_i\leq1$ for $i=1,\cdots,k+1$.

Choose $(c^0_1,\cdots,c^0_{k+1})\in\partial(\mathbb{R}_{>0})^{k+1}$, we may assume that $c^0_1=0$ for simplicity. Then, since
\begin{align}
&~~~~\varphi(t,c^0_2,\cdots,c^0_{k+1})\notag
\\&=\frac{t^2}{2}\|u_1\|_1^2+\frac{bt^4}{4}\left(\displaystyle\int_{B_i}|\nabla u_i|^2{\rm d}x\right)^2\notag
\\&~~~~+\frac{bt^2}{2}\displaystyle\int_{B_1}|\nabla u_1|^2{\rm d}x\displaystyle\sum_{j=2}^{k+1}(c^0_j)^2)\displaystyle\int_{B_j}|\nabla u_j|^2{\rm d}x-\frac{t^p}{p} \displaystyle\int_{B_1}|u_1|^p{\rm d}x\notag
\\&~~~~+\displaystyle\sum_{i=2}^{k+1}\|c^0_iu_i\|_i^2+\frac{b}{4}\displaystyle\sum_{i,j=2}^{k+1}(c^0_ic^0_j)^2\displaystyle\int_{B_i}|\nabla u_i|^2{\rm d}x\displaystyle\int_{B_j}|\nabla u_j|^2{\rm d}x-\frac{1}{p}\displaystyle\sum_{i=2}^{k+1}\displaystyle\int_{B_i}|c^0_iu_i|^p{\rm d}x,\notag
\end{align}
is an increasing function with respect to $t$ is small enough, we obtain (\ref{s2.22}) based on the analysis above. The proof is completed.
\end{proof}

Combining the above results, we shall prove the following.
\begin{lemma}\label{l2.4}
Let
\begin{equation*}
b^*:=\min\left\{b_*,\frac{(p-2)^2}{8p(4-p)\alpha_k}\right\},
\end{equation*}
where $b_*$ is defined in Lemma \ref{l2.1} and
\begin{equation*}
\alpha_k:=\displaystyle\min_{(u_1,\cdots,u_{k+1})\in N_k^-}E_b(u_1,\cdots,u_{k+1})\geq \frac{(k+1)(p-2)}{4p}(S_p)^\frac{p}{p-2}.
\end{equation*}
Then, for any fixed $\overrightarrow{\mathbf{r}}_k=(r_1,\cdots,r_k)\in\Lambda_k$ and $b\in(0,b^*)$,
there is a minimizer $(\omega_1,\cdots,\omega_{k+1})$ of its corresponding energy $E_b|_{N_k^-}$ such that $(-1)^{i+1}\omega_i$ is positive in $B_i$ for $i=1,\cdots,k+1$. Moreover, it satisfies (\ref{s2.1}).
\end{lemma}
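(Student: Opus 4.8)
The plan is to realize the prescribed-sign solution as a minimizer of $E_b$ over $N_k^-$, and then to invoke Lemma~\ref{l2.2} to turn that minimizer into a genuine critical point. First I would record the algebraic identity that makes the constraint useful. Writing $\|u\|^2:=\sum_i\|u_i\|_i^2$ and $B:=\sum_i\int_{B_i}|\nabla u_i|^2$, summing the Nehari equalities defining $N_k^-$ gives $\|u\|^2+bB^2=\sum_i\int_{B_i}|u_i|^p$, whence $E_b(u)=\frac{p-2}{2p}\|u\|^2-\frac{4-p}{4p}bB^2$ on $N_k^-$. The extra constraint \eqref{s2.20}, summed over $i$, forces $bB^2<\frac{p-2}{4-p}\|u\|^2$, so that $E_b(u)>\frac{p-2}{4p}\|u\|^2$. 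Combined with the componentwise bound $\|u_i\|_i^2\le\int_{B_i}|u_i|^p\le S_p^{-p/2}\|u_i\|_i^p$ (which yields $\|u_i\|_i^2\ge S_p^{p/(p-2)}$), this proves both the asserted lower bound $\alpha_k\ge\frac{(k+1)(p-2)}{4p}S_p^{p/(p-2)}>0$ and the reverse estimate $\|u\|^2<\frac{4p}{p-2}E_b(u)$, which is what will control minimizing sequences.

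Next I would produce a good minimizing sequence. Applying Ekeland's variational principle to $E_b|_{N_k^-}$ yields $(u^n)=(u_1^n,\dots,u_{k+1}^n)$ with $E_b(u^n)\to\alpha_k$ that is a Palais--Smale sequence for the restriction; since the matrix $N$ of Lemma~\ref{l2.2} is uniformly diagonally dominant, the Lagrange-multiplier argument there upgrades this to $E_b'(u^n)\to0$ in $\mathbf H_k^*$. The estimate $\|u^n\|^2<\frac{4p}{p-2}E_b(u^n)$ shows $(u^n)$ is bounded --- this is precisely where the constraint \eqref{s2.20}, absent in the range $4<p<6$ of \cite{23}, rescues compactness for $2<p<4$. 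Passing to a subsequence, $u_i^n\rightharpoonup\omega_i$ in $\mathcal H_i$ and $u_i^n\to\omega_i$ in $L^p(B_i)$ by the compact radial embedding, and I may assume $\|u_i^n\|_i^2\to\ell_i$, $\int_{B_i}|\nabla u_i^n|^2\to a_i$, and $B_n\to B_\infty$. Since $\int_{B_i}|\omega_i|^p=\lim\int_{B_i}|u_i^n|^p\ge S_p^{p/(p-2)}>0$, each $\omega_i\neq0$.

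The heart of the proof is strong convergence, which I expect to be the main obstacle: the wrong-sign nonlocal term $-\frac{4-p}{4p}bB^2$ destroys direct weak lower semicontinuity of $E_b$ for $p<4$, so convergence cannot be read off from the energy. The remedy is to match two Nehari-type identities furnished by the Palais--Smale property. Testing $E_b'(u^n)\to0$ against $u_i^n$ gives, in the limit, $\ell_i+ba_iB_\infty=\int_{B_i}|\omega_i|^p$; testing against a fixed $v$ shows $\omega_i$ solves $-(1+bB_\infty)\Delta\omega_i+V\omega_i=|\omega_i|^{p-2}\omega_i$, and the choice $v=\omega_i$ gives $\|\omega_i\|_i^2+bB_\infty\int_{B_i}|\nabla\omega_i|^2=\int_{B_i}|\omega_i|^p$. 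Subtracting yields $(\ell_i-\|\omega_i\|_i^2)+bB_\infty\bigl(a_i-\int_{B_i}|\nabla\omega_i|^2\bigr)=0$, and since weak lower semicontinuity makes both parenthesized terms nonnegative, each must vanish. Hence $\|u_i^n\|_i^2\to\|\omega_i\|_i^2$, which together with weak convergence forces $u_i^n\to\omega_i$ strongly in $\mathcal H_i$; in particular $a_i=\int_{B_i}|\nabla\omega_i|^2$, so $B_\infty=B(\omega)$ and $\omega$ solves the full system \eqref{s2.1}.

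Finally I would check the strict constraint and the sign pattern. By strong convergence $E_b(\omega)=\alpha_k$ and $(4-p)\int_{B_i}|\omega_i|^p\le2\|\omega_i\|_i^2$. To see that the inequality is strict, hence $\omega\in N_k^-$, suppose equality held for some $i_0$; the componentwise identity then gives $b\bigl(\int_{B_{i_0}}|\nabla\omega_{i_0}|^2\bigr)B(\omega)=\frac{p-2}{4-p}\|\omega_{i_0}\|_{i_0}^2$, and using $\int_{B_{i_0}}|\nabla\omega_{i_0}|^2\le\|\omega_{i_0}\|_{i_0}^2$ together with $B(\omega)\le\|\omega\|^2<\frac{8p}{p-2}\alpha_k$ (valid once $E_b(u^n)\le2\alpha_k$) forces $b\ge\frac{(p-2)^2}{8p(4-p)\alpha_k}\ge b^*$, contradicting $b<b^*$. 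Thus $\omega$ minimizes $E_b|_{N_k^-}$ and, by Lemma~\ref{l2.2}, is a critical point of $E_b$. Because $E_b$ and every constraint depend on each $u_i$ only through $|\nabla u_i|^2$, $Vu_i^2$ and $|u_i|^p$, replacing $\omega_i$ by $(-1)^{i+1}|\omega_i|$ keeps the tuple in $N_k^-$ with unchanged energy; the resulting minimizer still solves \eqref{s2.1}, and since $-(1+bB(\omega))\Delta+V$ is uniformly elliptic with $V\ge V_0>0$, the strong maximum principle applied to the nonnegative (resp. nonpositive) component $(-1)^{i+1}\omega_i$ gives $(-1)^{i+1}\omega_i>0$ throughout $B_i$. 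This completes the proof.
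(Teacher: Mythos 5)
Your overall architecture (minimize $E_b$ over $N_k^-$, pass to a sign-adjusted minimizer, invoke Lemma \ref{l2.2} and the strong maximum principle) matches the paper, and your opening energy estimate $E_b(u)>\frac{p-2}{4p}\sum_i\|u_i\|_i^2$ is exactly the paper's \eqref{s2.31}. But your compactness step takes a genuinely different route. The paper never produces a Palais--Smale sequence: it takes a plain minimizing sequence, extracts a weak limit $(u_1^0,\dots,u_{k+1}^0)$ with nonzero components, and rules out strict loss of norm by re-projecting the weak limit onto $N_k^-$ via the Poincar\'e--Miranda lemma (with scaling factors $t_i^0\le 1$, not all equal to $1$), verifying the strict constraint \eqref{s2.35} using $b<\frac{(p-2)^2}{8p(4-p)\alpha_k}$, and then deriving $\alpha_k>E_b(t_1^0u_1^0,\dots)\ge\alpha_k$ from the maximality property \eqref{s2.22} of Lemma \ref{l2.3}. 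You instead run Ekeland's principle, kill the Lagrange multipliers to get $E_b'(u^n)\to 0$, and obtain strong convergence by subtracting the limiting Nehari identity from the equation satisfied by the weak limit. Your scheme is the more standard one and, once the (PS) property is secured, cleaner; the paper's scheme is designed precisely to avoid having to secure it.

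That avoidance is not cosmetic, and it is where your proposal has a real gap. First, $N_k^-$ is an \emph{open} subset of the full Nehari set (the constraint \eqref{s2.20} is strict), hence not complete, so Ekeland's principle does not apply to $E_b|_{N_k^-}$ as stated. Second, and more seriously, to conclude $E_b'(u^n)\to 0$ from the constrained (PS) condition you must invert the matrices $N^n$ of Lemma \ref{l2.2} \emph{uniformly in $n$}; their row sums equal $-2\|u_i^n\|_i^2+(4-p)\int_{B_i}|u_i^n|^p{\rm d}x$, which on $N_k^-$ is only known to be negative and could a priori tend to $0$ along the minimizing sequence. Your phrase ``uniformly diagonally dominant'' asserts precisely what needs proof. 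The needed input is that the minimizing sequence stays a fixed distance from the boundary $\{(4-p)\int_{B_i}|u_i|^p{\rm d}x=2\|u_i\|_i^2\}$, and the estimate that delivers this is the very computation you perform only at the end, for the limit $\omega$: if $(4-p)\int_{B_{i_0}}|u^n_{i_0}|^p{\rm d}x-2\|u^n_{i_0}\|_{i_0}^2\to 0$, the componentwise Nehari identity together with $\sum_i\|u_i^n\|_i^2\le\frac{4p}{p-2}E_b(u^n)\le\frac{8p}{p-2}\alpha_k$ forces $b\ge\frac{(p-2)^2}{8p(4-p)\alpha_k}\ge b^*$, a contradiction. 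You should run this argument \emph{first}, along the sequence, to obtain a uniform gap $\delta>0$ with $(4-p)\int_{B_i}|u_i^n|^p{\rm d}x\le(2-\delta)\|u_i^n\|_i^2$; this simultaneously legitimizes Ekeland (work on the closed set defined by the $\delta$-relaxed constraint) and gives the uniform invertibility of $N^n$. With that reordering your proof closes; as written, the multiplier step is circular.
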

\begin{proof}
For any $(u_1,\cdots,u_{k+1})\in N_k^-$, it follows from the definition of $N_k^-$ and the Sobolev Inequality that
\begin{equation*}
\|u_i\|_i^2\leq\displaystyle\int_{B_i}|u_i|^p{\rm d}x\leq(S_p)^{-\frac{p}{2}}\|u_i\|_i^p,
\end{equation*}
for $i=1,\cdots,k+1$.
Then we obtain
\begin{equation}\label{s2.30}
\|u_i\|_i\geq(S_p)^\frac{p}{2(p-2)},
\end{equation}
for $i=1,\cdots,k+1$, and so
\begin{align}
E_b(u_1,\cdots,u_{k+1})&\geq\frac{1}{4}\displaystyle\sum_{i=1}^{k+1}\|u_i\|_i^2-\left(\frac{1}{p}-\frac{1}{4}\right)\displaystyle\sum_{i=1}^{k+1}\displaystyle\int_{B_i}|u_i|^p{\rm d}x\notag
\\&\geq\frac{p-2}{4p}\displaystyle\sum_{i=1}^{k+1}\|u_i\|_i^2\geq\frac{(k+1)(p-2)}{4p}(S_p)^\frac{p}{p-2},\label{s2.31}
\end{align}
which implies that
\begin{equation*}
\alpha_k\geq \frac{(k+1)(p-2)}{4p}(S_p)^\frac{p}{p-2}.
\end{equation*}
Let $\{(u^n_1,\cdots,u^n_{k+1})\}\subset N_k^-$ be a minimizing sequence of $E_b|_{N_k^-}$, we deduce from (\ref{s2.31}) that it is bounded in  $\mathcal{\textbf{H}}_k$.
Thus, we may assume that $(u^n_1,\cdots,u^n_{k+1})$ converges weakly to some element $(u^0_1,\cdots,u^0_{k+1})$ in  $\mathcal{\textbf{H}}_k$.

We claim that $u^0_i\neq0$ for $i=1,\cdots,k+1$. Actually, noticing that
\begin{equation}\label{e2.31}
\|u^0_i\|_i\leq \displaystyle\liminf_{n\rightarrow\infty}\|u^n_i\|_i,
\end{equation}
and
\begin{equation}\label{e2.32}
\displaystyle\lim_{n\rightarrow\infty}\displaystyle\int_{B_i}|u^n_i|^p{\rm d}x=\displaystyle\int_{B_i}|u^0_i|^p{\rm d}x,
\end{equation}
 for $i=1,\cdots,k+1$,
ones obtain from the definition of $N_k^-$ that
\begin{equation*}
\|u^0_i\|_i^2\leq\displaystyle\int_{B_i}|u^0_i|^p{\rm d}x\leq(S_p)^{-\frac{p}{2}}\|u^0_i\|_i^p,
\end{equation*}
for $i=1,\cdots,k+1$, and so
\begin{equation*}
\|u^0_i\|_i\geq(S_p)^\frac{p}{2(p-2)},
\end{equation*}
for $i=1,\cdots,k+1$, where we have used the compact Sobolev embedding $H_r^1(\mathbb{R}^3)\hookrightarrow L^q(\mathbb{R}^3)$, $q\in(2,6)$. Then we obtain the claim.

As for
\begin{equation}\label{s2.32}
(u^n_1,\cdots,u^n_{k+1})\rightarrow (u^0_1,\cdots,u^0_{k+1})
\end{equation}
in $\mathcal{\textbf{H}}_k$, we shall prove it by way of contradiction. Suppose that (\ref{s2.32}) is not true, we have
\begin{equation}\label{s2.33}
\|u^0_{i_0}\|_{i_0}<\displaystyle\liminf_{n\rightarrow\infty}\|u^n_{i_0}\|_{i_0},
\end{equation}
for at least one $i_0\in\{1,\cdots,k+1\}$. Let
\begin{align}
\widetilde{\phi}_i(t_1,\cdots,t_{k+1})&=t_i^2\|u^0_i\|_i^2+t_i^4b\left(\displaystyle\int_{B_i}|\nabla u^0_i|^2{\rm d}x\right)^2\notag
\\&~~~~+t_i^2b\displaystyle\int_{B_i}|\nabla u^0_i|^2{\rm d}x\displaystyle\sum_{j\neq i}^{k+1}t_j^2\displaystyle\int_{B_j}|\nabla u^0_j|^2{\rm d}x-t_i^p \displaystyle\int_{B_i}|u^0_i|^p{\rm d}x,\notag
\end{align}
for $i=1,\cdots,k+1$. Then, similarly as the proof  of Lemma \ref{l2.3}, we deduce from the Poincar\'{e}-Miranda Lemma (see\cite{26}) that there exists $(t_1^0,\cdots,t_{k+1}^0)\in(\mathbb{R}_{>0})^{k+1}$ such that
\begin{equation}\label{s2.34}
\widetilde{\phi}_i(t^0_1,\cdots,t^0_{k+1})=0,
 \end{equation}
 with $t^0_i\leq1$ for $i=1,\cdots,k+1$. Moreover, it is easy to check that $(t_1^0,\cdots,t_{k+1}^0)\neq(1,\cdots,1)$ by (\ref{s2.33}).

We claim that
\begin{equation}\label{s2.35}
(4-p)(t^0_i)^p\displaystyle\int_{B_i}|u^0_i|^p{\rm d}x<(t^0_i)^22\|u^0_i\|_i^2,
 \end{equation}
for $i=1,\cdots,k+1$. Otherwise, we obtain from (\ref{s2.34}) that
\begin{equation*}
\|t^0_iu^0_i\|_i^2+b\left(\displaystyle\int_{B_i}|t^0_i\nabla u^0_i|^2{\rm d}x\right)^2+b\displaystyle\int_{B_i}|t^0_i\nabla u^0_i|^2{\rm d}x\displaystyle\sum_{j\neq i}^{k+1}\displaystyle\int_{B_j}|t^0_j\nabla u^0_j|^2{\rm d}x\geq\frac{2}{4-p}\|t^0_iu^0_i\|_i^2,
\end{equation*}
which together with (\ref{s2.31}) yields
\begin{equation*}
\frac{p-2}{4-p}\|t^0_iu^0_i\|_i^2\leq b\|t^0_iu^0_i\|_i^2\displaystyle\sum_{j\neq i}^{k+1}\displaystyle\int_{B_j}|t^0_j\nabla u^0_j|^2{\rm d}x\leq \frac{8bp\alpha_k}{p-2}\|t^0_iu^0_i\|_i^2,
\end{equation*}
and so
\begin{equation*}
b\geq\frac{(p-2)^2}{8p(4-p)\alpha_k}.
\end{equation*}
This contradicts the assumption of $b$. Then we obtain (\ref{s2.35}).

Combining (\ref{s2.34}) and (\ref{s2.35}), we see that
$(t^0_1u^0_1,\cdots,t^0_{k+1}u^0_{k+1})\in N_k^-$. Therefore, notice that (\ref{e2.31})-(\ref{e2.32}), $(t_1^0,\cdots,t_{k+1}^0)\neq(1,\cdots,1)$ and Lemma \ref{l2.3}, we have
\begin{align}
\alpha_k&=\displaystyle\lim_{n\rightarrow\infty}E_b(u^n_1,\cdots,u^n_{k+1})\notag
\\&\geq\displaystyle\lim_{n\rightarrow\infty}E_b(t^0_1u^n_1,\cdots,t^0_{k+1}u^n_{k+1})\notag
\\&>E_b(t^0_1u^0_1,\cdots,t^0_{k+1}u^0_{k+1})\geq\alpha_k.\notag
\end{align}
That is obviously a contradiction. Hence, $(u^0_1,\cdots,u^0_{k+1})$ is a minimizer of $E_b|_{N_k^-}$. Clearly, we see that
\begin{equation*}
(\omega_1,\cdots,\omega_{k+1}):=(|u^0_1|,-|u^0_2|,\cdots,(-1)^{k+2}|u^0_{k+1}|)
\end{equation*}
is also a minimizer of $E_b|_{N_k^-}$, and by Lemma \ref{l2.2}, actually a critical point of $E_b$. Furthermore, noticing that it satisfies (\ref{s2.1}), ones deduce that each $(-1)^{i+1}\omega_i$ is positive in $B_i$ due to the strong maximum principle (see\cite{27}).
\end{proof}

\section{\normalsize{Existence of sign-changing radial solutions}}
\quad\quad
In this section, we shall find a least energy radial solution of (\ref{s2.1}) among elements in $\Lambda_k$. By using it as a building block, a radial solution of
(\ref{s1.1}) that changes sign exactly $k$ times will be established. From then on, we will attach a superscript $\overrightarrow{\textbf{r}}_k$ on a notion to emphasize the dependence of it on  $\overrightarrow{\textbf{r}}_k$.

For any $k$-tuple $\overrightarrow{\textbf{r}}_k=(r_1,\cdots,r_k)\in\Lambda_k$, we see that there exists a solution $\mathcal{\omega}^{\overrightarrow{\textbf{r}}_k}=(\omega_1^{\overrightarrow{\textbf{r}}_k},\cdots,\omega_{k+1}^{\overrightarrow{\textbf{r}}_k})$ of (\ref{s2.1})  which consists of sign changing components from the section above. To compare the energy of them, we define the function $\varphi:\Lambda_k\rightarrow \mathbb{R}$ by
\begin{align}
\varphi(\overrightarrow{\textbf{r}}_k)&=\varphi(r_1,\cdots,r_k)=E_b^{\overrightarrow{\textbf{r}}_k}(\omega_1^{\overrightarrow{\textbf{r}}_k},\cdots,\omega_{k+1}^{\overrightarrow{\textbf{r}}_k})\notag
\\&=\displaystyle\inf_{(u_1^{\overrightarrow{\textbf{r}}_k},\cdots,u_{k+1}^{\overrightarrow{\textbf{r}}_k})\in N_k^{\overrightarrow{\textbf{r}}_k,-}}E_b^{\overrightarrow{\textbf{r}}_k}(u_1^{\overrightarrow{\textbf{r}}_k},\cdots,u_{k+1}^{\overrightarrow{\textbf{r}}_k}),\label{s3.1}
\end{align}
and then give the following results.
\begin{lemma}\label{l3.1}
Let $\overrightarrow{\textbf{r}}_k=(r_1,\cdots,r_k)\in\Lambda_k$, there holds
\begin{description}
  \item[(i)] If $r_{i_0}-r_{{i_0}-1}\rightarrow0$ for some $i_0\in\{1,\cdots,k\}$, then $\varphi(\overrightarrow{\textbf{r}}_k)\rightarrow+\infty$.
  \item[(ii)] If $r_k\rightarrow\infty$, then $\varphi(\overrightarrow{\textbf{r}}_k)\rightarrow+\infty$.
  \item[(iii)] $\varphi$ is continuous in $\Lambda_k$.
\end{description}
In particular, there is a minimizer $\overrightarrow{\overline{{\textbf{r}}}}_k=(\overline{r}_1,\cdots,\overline{r}_k)\in\Lambda_k$ of $\varphi$.
\end{lemma}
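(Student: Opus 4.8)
The plan is to read parts (i)--(ii) as a single coercivity statement---$\varphi$ blows up as $\overrightarrow{\textbf{r}}_k$ approaches the boundary of $\Lambda_k$---to establish the continuity (iii) by transplanting competitors between nearby annular configurations, and then to conclude by a Weierstrass argument. Throughout I would rely on the lower bound (\ref{s2.31}), namely $E_b(u_1,\cdots,u_{k+1})\geq\frac{p-2}{4p}\sum_{i=1}^{k+1}\|u_i\|_i^2$ on $N_k^-$, which reduces every blow-up claim to showing that one component of the minimizer $\omega^{\overrightarrow{\textbf{r}}_k}$ has norm tending to $+\infty$.

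For (i), if $B_{i_0}$ collapses, i.e. $r_{i_0}-r_{i_0-1}\to0$, the point is that the best constant of the radial embedding $\mathcal{H}_{i_0}\hookrightarrow L^p(B_{i_0})$ degenerates: a Poincar\'e-type inequality on a thin spherical shell gives $|u_{i_0}|_p^2\leq\eta(r_{i_0}-r_{i_0-1})\|u_{i_0}\|_{i_0}^2$ with $\eta(\delta)\to0$ as $\delta\to0$. Inserting this into the Nehari identity $\|u_{i_0}\|_{i_0}^2\leq\int_{B_{i_0}}|u_{i_0}|^p\,{\rm d}x$, which holds for every element of $N_k^-$, forces $\|u_{i_0}\|_{i_0}^{p-2}\to+\infty$, hence $\varphi\geq\frac{p-2}{4p}\|\omega_{i_0}^{\overrightarrow{\textbf{r}}_k}\|_{i_0}^2\to+\infty$. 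This parallels (\ref{s2.30}), only with the whole-space constant $S_p$ replaced by the diverging constant of the shrinking shell, and it uses only $2<p<4$.

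For (ii) the outer region $B_{k+1}=\{|x|>r_k\}$ is unbounded, so its Sobolev constant stays bounded and the argument of (i) fails. Instead I would exploit radial decay: for radial $u\in\mathcal{H}_{k+1}$ the Strauss lemma gives $|u(x)|\leq C|x|^{-1}\|u\|_{H^1}$, whence $\int_{|x|>r_k}|u|^p\,{\rm d}x\leq Cr_k^{-(p-2)}\|u\|_{k+1}^p$ after splitting $|u|^p=|u|^{p-2}|u|^2$ and using $V\geq V_0$. Combined with $\|u_{k+1}\|_{k+1}^2\leq\int_{B_{k+1}}|u_{k+1}|^p\,{\rm d}x$ this yields $\|u_{k+1}\|_{k+1}\geq c\,r_k\to+\infty$, and hence $\varphi\to+\infty$. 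I expect \textbf{this escape-to-infinity step to be the main obstacle}: since there is no compactness gain from a shrinking domain, the growth must be extracted purely from the decay balance, and it is precisely here that the admissible range of $p$ (the restriction $3<p<4$ in the whole-space problem) must be monitored; in the ball case $B_{k+1}$ instead collapses as $r_k\to R$, so the softer argument of (i) applies for all $2<p<4$, as in Remark \ref{r1.1}.

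For the continuity (iii), given $\overrightarrow{\textbf{r}}_k^{(n)}\to\overrightarrow{\textbf{r}}_k$ in $\Lambda_k$ I would transplant competitors through the radial diffeomorphisms mapping each $B_i$ onto $B_i^{(n)}$, which converge to the identity in the relevant norms; transporting the minimizer for the limit configuration and rescaling it back onto $N_k^{(n),-}$ via the unique positive multipliers furnished by Lemma \ref{l2.3} (which tend to $1$) gives $\limsup_n\varphi(\overrightarrow{\textbf{r}}_k^{(n)})\leq\varphi(\overrightarrow{\textbf{r}}_k)$, while the symmetric transport of the minimizers $\omega^{(n)}$ back onto $N_k^{\overrightarrow{\textbf{r}}_k,-}$ yields the reverse inequality. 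Finally, (i) and (ii) show that $\varphi$ is proper: for suitable small $\delta$ and large $M$ the sublevel sets lie in the compact set $\{r_i-r_{i-1}\geq\delta,\ r_k\leq M\}\subset\Lambda_k$, so continuity produces a minimizer $\overrightarrow{\overline{\textbf{r}}}_k\in\Lambda_k$ of $\varphi$ by Weierstrass.
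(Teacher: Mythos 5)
Your proposal is correct and, for items (i), (iii) and the concluding Weierstrass step, follows essentially the same route as the paper: coercivity is extracted from the lower bound (\ref{s2.31}) combined with the Nehari constraint $\|u_i\|_i^2\le\int_{B_i}|u_i|^p\,{\rm d}x$, and continuity is proved by transplanting minimizers between nearby annular configurations and reprojecting onto the constraint set with multipliers tending to $1$. The one genuine divergence is item (ii), and there your estimate is in fact \emph{sharper} than the paper's, not weaker. The paper bounds $\int_{B_{k+1}}|\omega_{k+1}|^p\,{\rm d}x$ by $C\|\omega_{k+1}\|_{k+1}^p\int_{|x|>r_k}|x|^{-p}\,{\rm d}x=Cr_k^{3-p}\|\omega_{k+1}\|_{k+1}^p$; that integral converges only for $p>3$, and this is exactly where the restriction $3<p<4$ of Theorem \ref{t1.1} enters the argument (compare Remark \ref{r1.1}). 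Your splitting $|u|^p=|u|^{p-2}|u|^2$, with the factor $|u|^{p-2}$ controlled in $L^\infty(\{|x|>r_k\})$ by the Strauss decay and the factor $|u|^2$ controlled in $L^1$ by $V_0^{-1}\|u\|_{k+1}^2$, gives $\int_{B_{k+1}}|u|^p\,{\rm d}x\le Cr_k^{-(p-2)}\|u\|_{k+1}^p$ and hence $\|\omega_{k+1}\|_{k+1}\ge c\,r_k$ for \emph{every} $p>2$. So the step you flagged as the main obstacle is precisely the one you handle better than the paper: your version of (ii) requires no assumption beyond $2<p<4$. A minor caveat shared by both treatments of (i): the conclusion $\|\omega_{i_0}\|_{i_0}\to\infty$ as $r_{i_0}-r_{i_0-1}\to0$ tacitly uses that the embedding constant of the thin shell degenerates, which for the paper's H\"older-plus-volume version requires $|B_{i_0}|\to0$ (your one-dimensional Poincar\'e formulation is actually uniform in the shell's radius); in either case this is harmless, since the properness of $\varphi$ on sublevel sets is obtained by combining (i) and (ii), exactly as in your final paragraph.
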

\begin{proof}
(i). Since $(\omega_1^{\overrightarrow{\textbf{r}}_k},\cdots,\omega_{k+1}^{\overrightarrow{\textbf{r}}_k})\in N_k^{\overrightarrow{\textbf{r}}_k,-}$, we obtain by applying H\"{o}lder Inequality and Sobolev Inequality that
\begin{align}
\|\omega_{i_0}^{\overrightarrow{\textbf{r}}_k}\|_{i_0}^2\leq\displaystyle\int_{B^{\overrightarrow{\textbf{r}}_k}_{i_0}}|\omega_{i_0}^{\overrightarrow{\textbf{r}}_k}|^p{\rm d}x\leq\left(\displaystyle\int_{B^{\overrightarrow{\textbf{r}}_k}_{i_0}}|\omega_{i_0}^{\overrightarrow{\textbf{r}}_k}|^6{\rm d}x\right)^\frac{p}{6}|B^{\overrightarrow{\textbf{r}}_k}_{i_0}|^{1-\frac{p}{6}}
\leq C\|\omega_{i_0}^{\overrightarrow{\textbf{r}}_k}\|_{i_0}^p|B^{\overrightarrow{\textbf{r}}_k}_{i_0}|^{1-\frac{p}{6}},\notag
\end{align}
for some constant $C>0$, which means that $|B^{\overrightarrow{\textbf{r}}_k}_{i_0}|^{\frac{p}{6}-1}\leq C\|\omega_{i_0}^{\overrightarrow{\textbf{r}}_k}\|_{i_0}^{p-2}$.
Thus
\begin{equation*}
\|\omega_{i_0}^{\overrightarrow{\textbf{r}}_k}\|_{i_0}\rightarrow\infty,
\end{equation*}
as  $r_{i_0}-r_{{i_0}-1}\rightarrow0$. This together with (\ref{s2.31}) show that the first item follows.

(ii). For any $u\in H_r^1(\mathbb{R}^3)$, it follows from the Strauss Inequality (See [3]) that
\begin{equation*}
|u(x)|\leq C\frac{\|u\|_{\mathcal{H}}}{|x|},~a.e.~\text{in}~\mathbb{R}^3
\end{equation*}
with some positive constant $C$. By the above inequality and the fact $(\omega_1^{\overrightarrow{\textbf{r}}_k},\cdots,\omega_{k+1}^{\overrightarrow{\textbf{r}}_k})\in N_k^-$, ones have
\begin{align}
\|\omega_{k+1}^{\overrightarrow{\textbf{r}}_k}\|_{k+1}^2\leq\displaystyle\int_{B^{\overrightarrow{\textbf{r}}_k}_{k+1}}|\omega_{k+1}^{\overrightarrow{\textbf{r}}_k}|^p{\rm d}x\leq
\displaystyle\int_{B^{\overrightarrow{\textbf{r}}_k}_{k+1}}\frac{\|\omega_{k+1}^{\overrightarrow{\textbf{r}}_k}\|_{k+1}^p}{|x|^p}{\rm d}x=Cr_k^{3-p}\|\omega_{k+1}^{\overrightarrow{\textbf{r}}_k}\|_{k+1}^p,\notag
\end{align}
and so $r_k^{p-3}\leq C\|\omega_{k+1}^{\overrightarrow{\textbf{r}}_k}\|_{k+1}^{p-2}$. Hence the second item holds.

(iii). Suppose that a sequence
\begin{equation*}
\{\overrightarrow{\textbf{r}}_k^n\}_n=\{(r_1^n,\cdots,r_k^n)\}_n\subset\Lambda_k
\end{equation*}
converging to $\overrightarrow{\textbf{r}}_k=(r_1,\cdots,r_k)\in \Lambda_k$. We shall prove the following two aspects:
\begin{equation}\label{s3.2}
\varphi(\overrightarrow{\textbf{r}}_k)\geq\displaystyle\limsup_{n\rightarrow\infty}\varphi(\overrightarrow{\textbf{r}}^n_k)~\text{and}~\varphi(\overrightarrow{\textbf{r}}_k)\leq\displaystyle\liminf_{n\rightarrow\infty}\varphi(\overrightarrow{\textbf{r}}^n_k).
\end{equation}

Firstly, to show the former case: $\varphi(\overrightarrow{\textbf{r}}_k)\geq\displaystyle\limsup_{n\rightarrow\infty}\varphi(\overrightarrow{\textbf{r}}^n_k)$, we define $v_i^{\overrightarrow{\textbf{r}}^n_k}:[r_{i-1}^n,r_i^n]\rightarrow \mathbb{R}$ by
\begin{equation*}
v_i^{\overrightarrow{\textbf{r}}^n_k}:=\omega_i^{\overrightarrow{\textbf{r}}_k}\left(\frac{r_i-r_{i-1}}{r_i^n-r_{i-1}^n}(t-r_{i-1}^n)+r_{i-1}\right),~\text{for}~i=1,\cdots,k,
\end{equation*}
and
\begin{equation*}
v_{k+1}^{\overrightarrow{\textbf{r}}^n_k}:=\omega_i^{\overrightarrow{\textbf{r}}_k}\left(\frac{r_k}{r_k^n}t\right).
\end{equation*}
Then, standard calculation shows that
\begin{align}
&\|v_i^{\overrightarrow{\textbf{r}}^n_k}\|_{B_i^{\overrightarrow{\textbf{r}}^n_k}}^2=\|\omega_i^{\overrightarrow{\textbf{r}}_k}\|_{B_i^{\overrightarrow{\textbf{r}}_k}}^2+o_n(1),\label{s3.3}
\\&\displaystyle\int_{B^{\overrightarrow{\textbf{r}}^n_k}_i}|\nabla v_i^{\overrightarrow{\textbf{r}}^n_k}|^2{\rm d}x\displaystyle\int_{B^{\overrightarrow{\textbf{r}}^n_k}_j}|\nabla v_j^{\overrightarrow{\textbf{r}}^n_k}|^2{\rm d}x=\displaystyle\int_{B^{\overrightarrow{\textbf{r}}_k}_i}|\nabla \omega_i^{\overrightarrow{\textbf{r}}_k}|^2{\rm d}x\displaystyle\int_{B^{\overrightarrow{\textbf{r}}_k}_j}|\nabla \omega_j^{\overrightarrow{\textbf{r}}_k}|^2{\rm d}x+o_n(1),\label{s3.4}
\\&\displaystyle\int_{B^{\overrightarrow{\textbf{r}}^n_k}_i}|v_i^{\overrightarrow{\textbf{r}}^n_k}|^p{\rm d}x=\displaystyle\int_{B^{\overrightarrow{\textbf{r}}_k}_i}| \omega_i^{\overrightarrow{\textbf{r}}_k}|^p{\rm d}x+o_n(1),\label{s3.5}
\end{align}
for $i=1,\cdots,k+1$. Consequently, analogous to the argument of Step 2 in Lemma \ref{l2.1}, we obtain that there is a unique $(k+1)$-tuple of positive numbers $(t_1^n,\cdots,t_{k+1}^n)$ such that $(t_1^nv_1^{\overrightarrow{\textbf{r}}^n_k},\cdots,t_{k+1}^nv_{k+1}^{\overrightarrow{\textbf{r}}^n_k})\in  N_k^{\overrightarrow{\textbf{r}}^n_k,-}$. Furthermore, we have
\begin{equation}\label{s3.6}
\displaystyle\lim_{n\rightarrow\infty}t_i^n=1,
\end{equation}
for $i=1,\cdots,k+1$. By the definition of $(\omega_1^{\overrightarrow{\textbf{r}}^n_k},\cdots,\omega_{k+1}^{\overrightarrow{\textbf{r}}^n_k})$, it is easy to see
\begin{equation}\label{s3.7}
E_b^{\overrightarrow{\textbf{r}}^n_k}(t_1^nv_1^{\overrightarrow{\textbf{r}}^n_k},\cdots,t_{k+1}^nv_{k+1}^{\overrightarrow{\textbf{r}}^n_k})\geq E_b^{\overrightarrow{\textbf{r}}^n_k}(\omega_1^{\overrightarrow{\textbf{r}}^n_k},\cdots,\omega_{k+1}^{\overrightarrow{\textbf{r}}^n_k})=\varphi(\overrightarrow{\textbf{r}}_k^n).
\end{equation}
Combining (\ref{s3.4})-(\ref{s3.7}), it follows that
\begin{align}
\varphi(\overrightarrow{\textbf{r}}_k)&=E_b^{\overrightarrow{\textbf{r}}_k}(\omega_1^{\overrightarrow{\textbf{r}}_k},\cdots,\omega_{k+1}^{\overrightarrow{\textbf{r}}_k})=\displaystyle\limsup_{n\rightarrow\infty}E_b^{\overrightarrow{\textbf{r}}_k^n}(t_1^nv_1^{\overrightarrow{\textbf{r}}^n_k},\cdots,t_{k+1}^nv_{k+1}^{\overrightarrow{\textbf{r}}^n_k})\notag
\\&\geq\displaystyle\limsup_{n\rightarrow\infty}E_b^{\overrightarrow{\textbf{r}}_k^n}(\omega_1^{\overrightarrow{\textbf{r}}^n_k},\cdots,\omega_{k+1}^{\overrightarrow{\textbf{r}}^n_k})=\displaystyle\limsup_{n\rightarrow\infty}\varphi(\overrightarrow{\textbf{r}}_k^n).\label{s3.8}
\end{align}
We complete the proof of the first part in (\ref{s3.2}).

Secondly, to prove the latter case:$\varphi(\overrightarrow{\textbf{r}}_k)\leq\displaystyle\liminf_{n\rightarrow\infty}\varphi(\overrightarrow{\textbf{r}}^n_k)$, we define $v_i^{\overrightarrow{\textbf{r}}^n_k}:[r_{i-1},r_i]\rightarrow\mathbb{R}$ by
\begin{equation*}
v_i^{\overrightarrow{\textbf{r}}^n_k}:=\omega_i^{\overrightarrow{\textbf{r}}_k}\left(\frac{r_i^n-r_{i-1}^n}{r_i-r_{i-1}}(t-r_{i-1})+r^n_{i-1}\right),~\text{for}~i=1,\cdots,k,
\end{equation*}
and
\begin{equation*}
v_{k+1}^{\overrightarrow{\textbf{r}}^n_k}:=\omega_i^{\overrightarrow{\textbf{r}}_k^n}\left(\frac{r_k^n}{r_k}t\right).
\end{equation*}
where $r_0^n=0$ and $r_{k+1}^n=+\infty$. Then, standard computation shows that
\begin{align}
&\|v_i^{\overrightarrow{\textbf{r}}^n_k}\|_{B_i^{\overrightarrow{\textbf{r}}_k}}^2=\|\omega_i^{\overrightarrow{\textbf{r}}^n_k}\|_{B_i^{\overrightarrow{\textbf{r}}^n_k}}^2+o_n(1),\label{s3.9}
\\&\displaystyle\int_{B^{\overrightarrow{\textbf{r}}_k}_i}|\nabla v_i^{\overrightarrow{\textbf{r}}^n_k}|^2{\rm d}x\displaystyle\int_{B^{\overrightarrow{\textbf{r}}_k}_j}|\nabla v_j^{\overrightarrow{\textbf{r}}^n_k}|^2{\rm d}x=\displaystyle\int_{B^{\overrightarrow{\textbf{r}}^n_k}_i}|\nabla \omega_i^{\overrightarrow{\textbf{r}}^n_k}|^2{\rm d}x\displaystyle\int_{B^{\overrightarrow{\textbf{r}}^n_k}_j}|\nabla \omega_j^{\overrightarrow{\textbf{r}}^n_k}|^2{\rm d}x+o_n(1),\label{s3.10}
\\&\displaystyle\int_{B^{\overrightarrow{\textbf{r}}_k}_i}|v_i^{\overrightarrow{\textbf{r}}^n_k}|^p{\rm d}x=\displaystyle\int_{B^{\overrightarrow{\textbf{r}}^n_k}_i}| \omega_i^{\overrightarrow{\textbf{r}}^n_k}|^p{\rm d}x+o_n(1),\label{s3.11}
\end{align}
for $i=1,\cdots,k+1$. Combining (\ref{s3.8}) and the definition of $(\omega_1^{\overrightarrow{\textbf{r}}^n_k},\cdots,\omega_{k+1}^{\overrightarrow{\textbf{r}}^n_k})$, we deduce from (\ref{s2.31}) that
\begin{equation*}
\displaystyle\limsup_{n\rightarrow\infty}\displaystyle\sum_{i=1}^n\|\omega_i^{\overrightarrow{\textbf{r}}^n_k}\|_{B_i^{\overrightarrow{\textbf{r}}^n_k}}^2\leq\frac{4p\alpha_k}{p-2},
\end{equation*}
and
\begin{equation}\label{s3.12}
\|\omega_i^{\overrightarrow{\textbf{r}}^n_k}\|_{B_i^{\overrightarrow{\textbf{r}}^n_k}}^2+b\displaystyle\int_{B^{\overrightarrow{\textbf{r}}^n_k}_i}|\nabla \omega_i^{\overrightarrow{\textbf{r}}^n_k}|^2{\rm d}x\displaystyle\sum_{j=1}^{k+1}\displaystyle\int_{B^{\overrightarrow{\textbf{r}}^n_k}_j}|\nabla \omega_j^{\overrightarrow{\textbf{r}}^n_k}|^2{\rm d}x=\displaystyle\int_{B^{\overrightarrow{\textbf{r}}^n_k}_i}| \omega_i^{\overrightarrow{\textbf{r}}^n_k}|^p{\rm d}x,
\end{equation}
and
\begin{equation*}
\displaystyle\lim_{n\rightarrow\infty}(4-p)\displaystyle\int_{B^{\overrightarrow{\textbf{r}}^n_k}_i}| \omega_i^{\overrightarrow{\textbf{r}}^n_k}|^p{\rm d}x\leq\displaystyle\lim_{n\rightarrow\infty}2\|\omega_i^{\overrightarrow{\textbf{r}}^n_k}\|_{B_i^{\overrightarrow{\textbf{r}}^n_k}}^2.
\end{equation*}
Furthermore, similarly as the proof of (\ref{s2.35}), we can show
\begin{equation}\label{s3.13}
\displaystyle\lim_{n\rightarrow\infty}(4-p)\displaystyle\int_{B^{\overrightarrow{\textbf{r}}^n_k}_i}| \omega_i^{\overrightarrow{\textbf{r}}^n_k}|^p{\rm d}x<\displaystyle\lim_{n\rightarrow\infty}2\|\omega_i^{\overrightarrow{\textbf{r}}^n_k}\|_{B_i^{\overrightarrow{\textbf{r}}^n_k}}^2.
\end{equation}
Since (\ref{s3.12}) and (\ref{s3.13}) and noticing (\ref{s3.9}) and (\ref{s3.11}), we deduce from the Step 2 in the proof of Lemma  \ref{l2.1} that there is a unique $(k+1)$-tuple of positive numbers $(t_1^n,\cdots,t_{k+1}^n)$ such that
\begin{align}
&~~~~(t_i^n)^2\|v_i^{\overrightarrow{\textbf{r}}^n_k}\|_{B_i^{\overrightarrow{\textbf{r}}_k}}^2+b(t_i^n)^2\displaystyle\int_{B^{\overrightarrow{\textbf{r}}_k}_i}|\nabla v_i^{\overrightarrow{\textbf{r}}^n_k}|^2{\rm d}x\displaystyle\sum_{j=1}^{k+1}(t_j^n)^2\displaystyle\int_{B^{\overrightarrow{\textbf{r}}_k}_j}|\nabla v_j^{\overrightarrow{\textbf{r}}^n_k}|^2{\rm d}x\notag
\\&=(t_i^n)^p\displaystyle\int_{B^{\overrightarrow{\textbf{r}}_k}_i}|v_i^{\overrightarrow{\textbf{r}}^n_k}|^p{\rm d}x\notag
\end{align}
and
\begin{equation*}
(4-p)(t_i^n)^p\displaystyle\int_{B^{\overrightarrow{\textbf{r}}_k}_i}|v_i^{\overrightarrow{\textbf{r}}^n_k}|^p{\rm d}x<2(t_i^n)^2\|v_i^{\overrightarrow{\textbf{r}}^n_k}\|_{B_i^{\overrightarrow{\textbf{r}}_k}}^2,
\end{equation*}
for $i=1,\cdots,k+1$ as $n\rightarrow\infty$.  Moreover, we have
\begin{equation*}
\displaystyle\lim_{n\rightarrow\infty}t_i^n=1,
\end{equation*}
for $i=1,\cdots,k+1$. Consequently, we get
\begin{align}
\varphi(\overrightarrow{\textbf{r}}_k)&=E_b^{\overrightarrow{\textbf{r}}_k}(\omega_1^{\overrightarrow{\textbf{r}}_k},\cdots,\omega_{k+1}^{\overrightarrow{\textbf{r}}_k})\leq\displaystyle\liminf_{n\rightarrow\infty}E_b^{\overrightarrow{\textbf{r}}_k}(v_1^{\overrightarrow{\textbf{r}}^n_k},\cdots,v_{k+1}^{\overrightarrow{\textbf{r}}^n_k})\notag
\\&=\displaystyle\liminf_{n\rightarrow\infty}E_b^{\overrightarrow{\textbf{r}}^n_k}(\omega_1^{\overrightarrow{\textbf{r}}^n_k},\cdots,\omega_{k+1}^{\overrightarrow{\textbf{r}}^n_k})=\displaystyle\liminf_{n\rightarrow\infty}\varphi(\overrightarrow{\textbf{r}}_k^n).\notag
\end{align}
Then we complete the proof of item (iii).
\end{proof}

We shall prove that the point $\overrightarrow{\overline{\textbf{r}}}_k=(\overline{r}_1,\cdots,\overline{r}_k)\in\Lambda_k$ found in the above Lemma is the very element in $\Lambda_k$ which gives the solution of (\ref{s1.1}) with desired sign-changing property. For simplicity, we will use $t$ to denote the radial variable of functions  in $\mathcal{\textbf{H}}_k$.
\begin{proof}[Proof of Theorem 1.1.]
To the contrary, we assume that $\displaystyle\sum_{i=1}^{k+1}\omega^{\overrightarrow{\overline{\textbf{r}}}_k}$ is not a solution of (\ref{s1.1}). That is, there exists at least one $i_0\in\{1,\cdots,k\}$ such that
\begin{equation*}
\omega_-:=\displaystyle\lim_{t\rightarrow\overline{r}_{i_0}^-}\frac{d\omega_{i_0}^{\overrightarrow{\overline{\textbf{r}}}_k}(t)}{dt}\neq\displaystyle\lim_{t\rightarrow\overline{r}_{i_0}^+}\frac{d\omega_{i_0+1}^{\overrightarrow{\overline{\textbf{r}}}_k}(t)}{dt}=:\omega_+.
\end{equation*}
For convenience, we will drop the superscript $\overrightarrow{\overline{\textbf{r}}}_k$ in $(\omega_1^{\overrightarrow{\overline{\textbf{r}}}_k},\cdots,\omega_{k+1}^{\overrightarrow{\overline{\textbf{r}}}_k})$ and use $'$ to denote differentiation with respect to the radial variable $t$.

Fix a small $\sigma>0$, and define
\begin{equation}\label{s3.14}
\widehat{z}(t)=\left\{\begin{array}{ll}
\omega_{i_0}(t),~t\in(\overline{r}_{i_0-1},\overline{r}_{i_0}-\sigma),\\
z(t),~~t\in(\overline{r}_{i_0}-\sigma,\overline{r}_{i_0}+\sigma),\\
\omega_{i_0+1}(t),~t\in(\overline{r}_{i_0}+\sigma,\overline{r}_{i_0+1}),
\end{array}\right.
\end{equation}
where $z(t):=\omega_{i_0}(\overline{r}_{i_0}-\sigma)+\frac{\omega_{i_0+1}(\overline{r}_{i_0}+\sigma)-\omega_{i_0}(\overline{r}_{i_0}-\sigma)}{2\sigma}(t-\overline{r}_{i_0}+\sigma)$.
Then it is easy to see that $Z(t)$ has a unique zero point $\overline{s}_{i_0}$ in $(\overline{r}_{i_0-1},\overline{r}_{i_0})$. Using it, we set a $(k+1)$-tuple of functions $(z_1,\cdots,z_{k+1})$ by
\begin{equation}\label{s3.15}
\left\{\begin{array}{ll}
z_{i_0}(t)=\widehat{z}(t),~t\in(\overline{r}_{i_0-1},\overline{s}_{i_0}),\\
z_{i_0+1}(t)=\widehat{z}(t),~t\in(\overline{s}_{i_0},\overline{r}_{i_0+1}),\\
z_{i}(t)=\omega_i(t),~t\in(\overline{r}_{i-1},\overline{r}_{i})~\text{if}~i\neq i_0,i_0+1.
\end{array}\right.
\end{equation}
Furthermore, we obtain by direct computation that
\begin{equation*}
\|z_{i_0}\|_{B_{i_0}'}=\|\omega_{i_0}\|_{B_{i_0}}+o_\sigma(1),~~\|z_{i_0+1}\|_{B_{i_0+1}'}=\|\omega_{i_0+1}\|_{B_{i_0+1}}+o_\sigma(1)
\end{equation*}
and
\begin{equation*}
\displaystyle\int_{B_{i_0}'}|z_{i_0}|^p{\rm d}x=\displaystyle\int_{B_{i_0}}|\omega_{i_0}|^p{\rm d}x+o_\sigma(1),~~\displaystyle\int_{B_{i_0}'}|z_{i_0}|^p{\rm d}x=\displaystyle\int_{B_{i_0}}|\omega_{i_0}|^p{\rm d}x+o_\sigma(1)
\end{equation*}
as $\sigma\rightarrow0^+$. Thus, similarly as the Step 2 in the proof of Lemma \ref{l2.1}, there is a unique $(k+1)$-tuple $(\overline{t}_1,\cdots,\overline{t}_{k+1})\in(\mathbb{R}_{>0})^{k+1}$ such that $(\overline{t}_1z_1,\cdots,\overline{t}_{k+1}z_{k+1})\in N^{\overrightarrow{\widehat{\textbf{r}}}_k,-}$ with $\overrightarrow{\widehat{\textbf{r}}}_k:=(\overline{r}_1,\cdots,\overline{r}_{i_0-1},\overline{s}_{i_0},\overline{r}_{i_0+1},\cdots,\overline{r}_k)$.
Moreover, we have
\begin{equation}\label{s3.16}
\displaystyle\lim_{\sigma\rightarrow0^+}(\overline{t}_1,\cdots,\overline{t}_{k+1})=(1,\cdots,1)
\end{equation}
and
\begin{equation}\label{s3.17}
I_b(W)=E_b^{\overrightarrow{\overline{\textbf{r}}}_k}(\omega_1,\cdots,\omega_{k+1})\leq E_b^{\overrightarrow{\widehat{\textbf{r}}}_k}(\overline{t}_1z_1,\cdots,\overline{t}_{k+1}z_{k+1})=I_b(Z),
\end{equation}
where $W,Z\in H_r^1(\mathbb{R}^3)$ are defined as $W(t):=\displaystyle\sum_{i=1}^{k+1}\omega_i(t)$ and $Z(t):=\displaystyle\sum_{i=1}^{k+1}\overline{t}_iz_i(t)$.
We try to give a contradiction in the following.

Indeed, it holds
\begin{align}
&I_b(Z)-I_b(W)\notag
\\&\leq\left(\displaystyle\int_0^{\overline{r}_{i_0}-\sigma}+\displaystyle\int_{\overline{r}_{i_0}+\sigma}^\infty\right)\left(\frac{1}{2}(Z')^2+\frac{1}{2}V(t)Z^2
-\frac{1}{p}|W|^p-\frac{Z^2-W^2}{2}|W|^{p-2}\right)t^2dt\notag
\\&~~~~-\left(\displaystyle\int_0^{\overline{r}_{i_0}-\sigma}+\displaystyle\int_{\overline{r}_{i_0}+\sigma}^\infty\right)\left(\frac{1}{2}(W')^2+\frac{1}{2}V(t)W^2
-\frac{1}{p}|W|^p\right)t^2dt\notag
\\&~~~~+\displaystyle\int_{\overline{r}_{i_0}-\sigma}^{\overline{r}_{i_0}+\sigma}\left(\frac{1}{2}(Z')^2+\frac{1}{2}V(t)Z^2
-\frac{1}{p}|Z|^p\right)t^2dt\notag
\\&~~~~-\displaystyle\int_{\overline{r}_{i_0}-\sigma}^{\overline{r}_{i_0}+\sigma}\left(\frac{1}{2}(W')^2+\frac{1}{2}V(t)W^2
-\frac{1}{p}|W|^p\right)t^2dt\notag
\\&~~~~+\frac{b}{4}\left(\displaystyle\int_0^\infty(Z')^2t^2dt\right)^2-\frac{b}{4}\left(\displaystyle\int_0^\infty(W')^2t^2dt\right)^2,\label{s3.18}
\end{align}
where we have used the inequality
\begin{equation*}
\frac{c^q}{q}\geq\frac{d^q}{q}+\frac{c^2-d^2}{2}d^{q-2},
\end{equation*}
for any $c,d\geq0$ and $q\geq2$. Noticing the definition of $W$ and $(\omega_1,\cdots,\omega_{k+1})\in N_k^{\overrightarrow{\overline{\textbf{r}}}_k,-}$, we get
\begin{equation}\label{s3.19}
\displaystyle\int_0^\infty((W')^2+V(t)W^2)t^2dt+b\left(\displaystyle\int_0^\infty(W')^2t^2dt\right)^2=\displaystyle\int_0^\infty|W|^pt^2dt.
\end{equation}
Combining (\ref{s3.18}) and (\ref{s3.19}) yields that
\begin{align}
&I_b(Z)-I_b(W)\notag
\\&\leq\left(\displaystyle\int_0^{\overline{r}_{i_0}-\sigma}+\displaystyle\int_{\overline{r}_{i_0}+\sigma}^\infty\right)\left(\frac{1+bA}{2}(Z')^2+\frac{1}{2}V(t)Z^2
-\frac{Z^2|W|^{p-2}}{2}\right)t^2dt\label{s3.20}
\\&~~~~+\displaystyle\int_{\overline{r}_{i_0}-\sigma}^{\overline{r}_{i_0}+\sigma}\left(\frac{1+bA}{2}(Z')^2+\frac{1}{2}V(t)Z^2
-\frac{1}{p}|Z|^p+\frac{1}{p}|W|^p\right)t^2dt\label{s3.21}
\\&~~~~+\frac{b}{4}\left(\displaystyle\int_0^\infty(Z')^2t^2dt\right)^2+\frac{b}{4}\left(\displaystyle\int_0^\infty(W')^2t^2dt\right)^2-\frac{b}{2}\displaystyle\int_0^\infty(W')^2t^2dt\displaystyle\int_0^\infty(Z')^2t^2dt,\label{s3.22}
\end{align}
where we denote
\begin{equation*}
A:=\displaystyle\int_0^\infty(W')^2t^2dt.
\end{equation*}

Let us consider (\ref{s3.20}). Since $W$ satisfies $W(\overline{r}_{i_0})=0$ and
\begin{equation}\label{s3.23}
-(1+bA)(t^2W')'+t^2V(t)W=t^2|W|^{p-2}W,
\end{equation}
for $t\in[\overline{r}_{i_0-1},\overline{r}_{i_0}]$, we have $(t^2W')'(\overline{r}_{i_0})=0$, and so
\begin{equation}\label{s3.24}
W(\overline{r}_{i_0}-\sigma)=-\sigma\omega_-+o(\sigma)~\text{and}~(\overline{r}_{i_0}-\sigma)^2W'(\overline{r}_{i_0}-\sigma)=(\overline{r}_{i_0})^2\omega_-+o(\sigma).
\end{equation}
Noticing (\ref{s3.16}) and (\ref{s3.24}), we obtain by integrating (\ref{s3.23})by parts that
\begin{align}
&~~~~\displaystyle\int_0^{\overline{r}_{i_0}-\sigma}\left(\frac{1+bA}{2}(Z')^2+\frac{1}{2}V(t)Z^2
-\frac{Z^2|W|^{p-2}}{2}\right)t^2dt\notag
\\&=(1+o(1))\displaystyle\int_0^{\overline{r}_{i_0}-\sigma}\left(\frac{1+bA}{2}(W')^2+\frac{1}{2}V(t)W^2
-\frac{|W|^{p}}{2}\right)t^2dt\notag
\\&=\frac{(1+o(1))(1+bA)}{2}W'(\overline{r}_{i_0}-\sigma)W(\overline{r}_{i_0}-\sigma)(\overline{r}_{i_0}-\sigma)^2+o(\sigma)\notag
\\&=-\frac{1+bA}{2}(\omega_-)^2(\overline{r}_{i_0})^2\sigma+o(\sigma).\label{s3.25}
\end{align}
In a similar way, we can get
\begin{align}
&~~~~\displaystyle\int_{\overline{r}_{i_0}+\sigma}^\infty\left(\frac{1+bA}{2}(Z')^2+\frac{1}{2}V(t)Z^2
-\frac{Z^2|W|^{p-2}}{2}\right)t^2dt\notag
\\&=-\frac{1+bA}{2}(\omega_+)^2(\overline{r}_{i_0})^2\sigma+o(\sigma).\label{s3.26}
\end{align}

Next, for (\ref{s3.21}), from (\ref{s3.14}) we can easily check that
\begin{equation}\label{s3.27}
\displaystyle\int_{\overline{r}_{i_0}-\sigma}^{\overline{r}_{i_0}+\sigma}\left(\frac{1}{2}V(t)Z^2
-\frac{|Z|^p-|W|^{p}}{p}\right)t^2dt=o(\sigma)
\end{equation}
and
\begin{equation}\label{s3.28}
\displaystyle\int_{\overline{r}_{i_0}-\sigma}^{\overline{r}_{i_0}+\sigma}\frac{1+bA}{2}(Z')^2t^2dt=\frac{1+bA}{4}(\omega_++\omega_-)^2(\overline{r}_{i_0})^2\sigma+o(\sigma).
\end{equation}

To consider (\ref{s3.22}), we shall prove that
\begin{equation}\label{s3.29}
\overline{t}_i=o(\sigma^\frac{1}{2}),
\end{equation}
for $i=1,\cdots,k+1$. Suppose otherwise, we have
\begin{equation*}
\displaystyle\lim_{\sigma\rightarrow0^+}|\sigma^{-\frac{1}{2}}(\overline{t}_{k_0}-1)|=\displaystyle\max_{1\leq i\leq k+1}\displaystyle\lim_{\sigma\rightarrow0^+}|\sigma^{-\frac{1}{2}}(\overline{t}_i-1)|=B\neq0,
\end{equation*}
for some $k_0\in\{1,\cdots,k+1\}$. Then we obtain
\begin{align}
0&=\displaystyle\lim_{\sigma\rightarrow0^+}\sigma^{-\frac{1}{2}}(F_{k_0}(\overline{t}_1z_1,\cdots,\overline{t}_{k+1}z_{k+1})-F_{k_0}(\omega_1,\cdots,\omega_{k+1}))(\sigma^{-\frac{1}{2}}(\overline{t}_{k_0}-1))^{-1}\notag
\\&\leq2\|\omega_{k_0}\|_{k_0}^2+4b\left(\displaystyle\int_{B_{k_0}^{\overrightarrow{\overline{\textbf{r}}}_k}}|\nabla\omega_{k_0}|^2{\rm d}x\right)^2\notag
\\&~~~~+2b\displaystyle\int_{B_{k_0}^{\overrightarrow{\overline{\textbf{r}}}_k}}|\nabla\omega_{k_0}|^2{\rm d}x\displaystyle\sum_{j\neq k_0}^{k+1}\displaystyle\int_{B_j^{\overrightarrow{\overline{\textbf{r}}}_k}}|\nabla\omega_j|^2{\rm d}x-p\displaystyle\int_{B_{k_0}^{\overrightarrow{\overline{\textbf{r}}}_k}}|\omega_{k_0}|^p{\rm d}x\notag
\\&=(4-p)\displaystyle\int_{B_{k_0}^{\overrightarrow{\overline{\textbf{r}}}_k}}|\omega_{k_0}|^p{\rm d}x-2\|\omega_{k_0}\|_{k_0}^2-2b\displaystyle\int_{B_{k_0}^{\overrightarrow{\overline{\textbf{r}}}_k}}|\nabla\omega_{k_0}|^2{\rm d}x\displaystyle\sum_{j\neq k_0}^{k+1}\displaystyle\int_{B_j^{\overrightarrow{\overline{\textbf{r}}}_k}}|\nabla\omega_j|^2{\rm d}x<0,\notag
\end{align}
where $F_{k_0}$ is defined in (\ref{s2.18}), and we have used the fact $(\omega_1,\cdots,\omega_{k+1})\in N_k^{\overrightarrow{\overline{\textbf{r}}}_k,-}$, $(\overline{t}_1z_1,\cdots,\overline{t}_{k+1}z_{k+1})\in N_k^{\overrightarrow{\widehat{\overline{\textbf{r}}}_k},-}$
and $\overrightarrow{\widehat{\overline{\textbf{r}}}_k}\rightarrow\overrightarrow{\overline{\textbf{r}}}_k$ as $\sigma\rightarrow 0^+$. This is a contradiction and
we obtain (\ref{s3.29}). Hence,
\begin{align}
&~~~~\frac{b}{4}\left(\displaystyle\int_0^\infty(Z')^2t^2dt\right)^2+\frac{b}{4}\left(\displaystyle\int_0^\infty(W')^2t^2dt\right)^2-\frac{b}{2}\displaystyle\int_0^\infty(W')^2t^2dt\displaystyle\int_0^\infty(Z')^2t^2dt\notag
\\&=\frac{b}{4}\left(\displaystyle\int_0^\infty(W')^2t^2dt-\displaystyle\int_0^\infty(Z')^2t^2dt\right)^2\notag
\\&=\frac{b}{4}[o(\sigma^\frac{1}{2})\left(\displaystyle\int_0^{\overline{r}_{i_0}-\sigma}+\displaystyle\int_{\overline{r}_{i_0}+\sigma}^\infty\right)(W')^2t^2dt\notag
\\&~~~~+\left(\frac{(\omega_++\omega_-)^2}{2}-((\omega_+)^2+(\omega_-)^2)\right)(\overline{r}_{i_0})^2\sigma]^2=o(\sigma).\label{s3.30}
\end{align}

Combining (\ref{s3.25})-(\ref{s3.28}) and (\ref{s3.30}), we get
\begin{equation*}
I_b(Z)-I_b(W)\leq-\frac{a+bA}{4}(\omega_+-\omega_-)(\overline{r}_{i_0})^2\sigma+o(\sigma),
\end{equation*}
which means that $I_b(Z)-I_b(W)<0$ if we take $\sigma>0$ small enough. This contradicts (\ref{s3.17}). Then we complete the proof.
\end{proof}

\section{\normalsize{Energy comparison and some properties of the solutions}}

\quad\quad
From Theorem 1.1, we see that (\ref{s1.1}) admits a radial solution $u_k$ which changes exactly $k$-times for any integer $k\geq0$. In this Section, we shall show that $E_b(u_k)$
is strictly increasing in $k$. In particular, we prove that $E_b(u_k)>(k+1)E_b(u_0)$. Now, let us give the proof of Theorem 1.2.
\begin{proof}[Proof of Theorem 1.2]
It follows from Lemma \ref{l3.1} that there is a vector $\overrightarrow{\overline{\textbf{r}}}_k=(\overline{r}_1,\cdots,\overline{r}_k)\in\Lambda_k$ such that
\begin{equation*}
\varphi_k(\overrightarrow{\overline{\textbf{r}}}_k)=\displaystyle\inf_{\overrightarrow{\textbf{r}}_k\in\Lambda_k}\varphi_k(\overrightarrow{\textbf{r}}_k),
\end{equation*}
where $\varphi_k$ is defined in (\ref{s3.1}) and the subscript $k$ is used to emphasize the dependence on $k$. Moreover, Lemma \ref{l2.4} shows that a vector
$(\omega_1^{\overrightarrow{\overline{\textbf{r}}}_k},\cdots,\omega_{k+1}^{\overrightarrow{\overline{\textbf{r}}}_k})\in N_k^{\overrightarrow{\overline{\textbf{r}}}_k,-}$
satisfies the following system
\begin{equation*}
\left\{\begin{array}{ll}
-\left(a+b\displaystyle\sum_{j=1}^{k+1}\displaystyle\int_{B^{\overrightarrow{\overline{\textbf{r}}}_k}_j}|\nabla u_j|^2{\rm d}x\right)\Delta u_i+V(|x|)u_i=|u_i|^{p-2}u_i,~ x\in B^{\overrightarrow{\overline{\textbf{r}}}_k}_i,\\
u_i=0,~x\not\in B^{\overrightarrow{\overline{\textbf{r}}}_k}_i.
\end{array}\right.
\end{equation*}
Furthermore, by Theorem 1.1,  we see that
\begin{equation*}
u_k:=\omega_1^{\overrightarrow{\overline{\textbf{r}}}_k}+\cdots+\omega_{k+1}^{\overrightarrow{\overline{\textbf{r}}}_k},
\end{equation*}
is a solution of (\ref{s1.1}) which changes sign exactly $k$-times. Denote also
\begin{equation*}
u_{k+1}:=\omega_1^{\overrightarrow{\overline{\textbf{r}}}_{k+1}}+\cdots+\omega_{k+2}^{\overrightarrow{\overline{\textbf{r}}}_{k+1}},
\end{equation*}
as a solution of (\ref{s1.1}) which changes sign exactly $k+1$-times, and $\overrightarrow{\overline{\textbf{r}}}_{k+1}=(\overline{r}_1,\cdots,\overline{r}_{k+1})\in\Lambda_{k+1}$ is obtained from
Lemma \ref{l3.1}.

Let
\begin{equation*}
\overrightarrow{\widetilde{\textbf{r}}}_{k}=(\overline{r}_2,\overline{r}_3,\cdots,\overline{r}_{k+1}).
\end{equation*}
Then, ones deduce from Lemma \ref{l2.4} that there is a minimizer $(\omega_1^{\overrightarrow{\widetilde{\textbf{r}}}_{k}},\cdots,\omega_{k+1}^{\overrightarrow{\widetilde{\textbf{r}}}_{k}})$
of $E_b^{\overrightarrow{\widetilde{\textbf{r}}}_{k}}|_{N_k^{\overrightarrow{\widetilde{\textbf{r}}}_{k},-}}$ such that
\begin{equation}\label{s4.1}
E_b^{\overrightarrow{\widetilde{\textbf{r}}}_{k}}(\omega_1^{\overrightarrow{\widetilde{\textbf{r}}}_{k}},\cdots,\omega_{k+1}^{\overrightarrow{\widetilde{\textbf{r}}}_{k}})=\displaystyle\inf_{(u_1,\cdots,u_{k+1})\in N_k^{\overrightarrow{\widetilde{\textbf{r}}}_{k},-}}E_b^{\overrightarrow{\widetilde{\textbf{r}}}_{k}}(u_1,\cdots,u_{k+1}).
\end{equation}
Since $B_1^{\overrightarrow{\overline{\textbf{r}}}_{k+1}}\subset B_1^{\overrightarrow{\widetilde{\textbf{r}}}_{k}}$, we have $H_1^{\overrightarrow{\overline{\textbf{r}}}_{k+1}}(B_1^{\overrightarrow{\overline{\textbf{r}}}_{k+1}})\subset H_1^{\overrightarrow{\widetilde{\textbf{r}}}_{k}}(B_1^{\overrightarrow{\widetilde{\textbf{r}}}_{k}})$.
Thus, $(\omega_1^{\overrightarrow{\overline{\textbf{r}}}_{k+1}},\omega_3^{\overrightarrow{\overline{\textbf{r}}}_{k+1}},\cdots,\omega_{k+2}^{\overrightarrow{\overline{\textbf{r}}}_{k+1}})$
is an element of $\mathcal{\textbf{H}}_k^{\overrightarrow{\widetilde{\textbf{r}}}_{k}}$. Noticing that $(\omega_1^{\overrightarrow{\overline{\textbf{r}}}_{k+1}},\omega_2^{\overrightarrow{\overline{\textbf{r}}}_{k+1}},\cdots,\omega_{k+2}^{\overrightarrow{\overline{\textbf{r}}}_{k+1}})\in N_{k+1}^{\overrightarrow{\overline{\textbf{r}}}_{k+1}}$,
we have
\begin{align}
&\|\omega_i^{\overrightarrow{\overline{\textbf{r}}}_{k+1}}\|_i^2+b\left(\displaystyle\int_{B_i^{\overrightarrow{\overline{\textbf{r}}}_{k+1}}}|\nabla\omega_i^{\overrightarrow{\overline{\textbf{r}}}_{k+1}}|^2{\rm d}x\right)^2\notag
\\&~~~~+b\displaystyle\int_{B_i^{\overrightarrow{\overline{\textbf{r}}}_{k+1}}}|\nabla\omega_i^{\overrightarrow{\overline{\textbf{r}}}_{k+1}}|^2{\rm d}x\displaystyle\sum_{j\neq i,j\neq2}^{k+1}\displaystyle\int_{B_j^{\overrightarrow{\overline{\textbf{r}}}_{k+1}}}|\nabla\omega_j^{\overrightarrow{\overline{\textbf{r}}}_{k+1}}|^2{\rm d}x
-\displaystyle\int_{B_i^{\overrightarrow{\overline{\textbf{r}}}_{k+1}}}|\omega_i^{\overrightarrow{\overline{\textbf{r}}}_{k+1}}|^p{\rm d}x<0,\label{s4.2}
\end{align}
and
\begin{equation}\label{s4.3}
(4-p)\displaystyle\int_{B_i^{\overrightarrow{\overline{\textbf{r}}}_{k+1}}}|\omega_i^{\overrightarrow{\overline{\textbf{r}}}_{k+1}}|^p{\rm d}x<2\|\omega_i^{\overrightarrow{\overline{\textbf{r}}}_{k+1}}\|_i^2
\end{equation}
for $i=1,3,\cdots,k+2$. Consequently, with the help of Lemma \ref{l2.3}, we obtain that
there exists a unique $(k+1)$-tuple $(\widehat{t}_1,\widehat{t}_3,\cdots,\widehat{t}_{k+1})\neq(1,1,\cdots,1)$ of positive numbers such that
\begin{equation*}
(\widehat{t}_1\omega_1^{\overrightarrow{\overline{\textbf{r}}}_{k+1}},\widehat{t}_3\omega_3^{\overrightarrow{\overline{\textbf{r}}}_{k+1}},\cdots,\widehat{t}_{k+2}\omega_{k+2}^{\overrightarrow{\overline{\textbf{r}}}_{k+1}})\in N_k^{\overrightarrow{\widetilde{\textbf{r}}}_{k},-}
\end{equation*}
 with $\widehat{t}_i\leq1$ for $i=1,3,\cdots,k+2$. Moreover, we have
 \begin{equation}\label{s4.4}
E_b^{\overrightarrow{\widetilde{\textbf{r}}}_{k}}(\omega_1^{\overrightarrow{\overline{\textbf{r}}}_{k+1}},\omega_3^{\overrightarrow{\overline{\textbf{r}}}_{k+1}},\cdots,\omega_{k+2}^{\overrightarrow{\overline{\textbf{r}}}_{k+1}})<E_b^{\overrightarrow{\widetilde{\textbf{r}}}_{k}}(\widehat{t}_1\omega_1^{\overrightarrow{\overline{\textbf{r}}}_{k+1}},\widehat{t}_3\omega_3^{\overrightarrow{\overline{\textbf{r}}}_{k+1}},\cdots,\widehat{t}_{k+2}\omega_1^{\overrightarrow{\overline{\textbf{r}}}_{k+1}}).
\end{equation}
Similarly, we have
\begin{align}
&~~~~E_b^{\overrightarrow{\widetilde{\textbf{r}}}_{k}}(\widehat{t}_1\omega_1^{\overrightarrow{\overline{\textbf{r}}}_{k+1}},\widehat{t}_3\omega_3^{\overrightarrow{\overline{\textbf{r}}}_{k+1}},\cdots,\widehat{t}_{k+2}\omega_{k+2}^{\overrightarrow{\overline{\textbf{r}}}_{k+1}})\notag
\\&=E_b^{\overrightarrow{\overline{\textbf{r}}}_{k+1}}(\widehat{t}_1\omega_1^{\overrightarrow{\overline{\textbf{r}}}_{k+1}},0,\widehat{t}_3\omega_3^{\overrightarrow{\overline{\textbf{r}}}_{k+1}},\cdots,\widehat{t}_{k+2}\omega_{k+2}^{\overrightarrow{\overline{\textbf{r}}}_{k+1}})\notag
\\&<E_b^{\overrightarrow{\overline{\textbf{r}}}_{k+1}}(\omega_1^{\overrightarrow{\overline{\textbf{r}}}_{k+1}},\omega_2^{\overrightarrow{\overline{\textbf{r}}}_{k+1}},\omega_3^{\overrightarrow{\overline{\textbf{r}}}_{k+1}},\cdots,\omega_{k+2}^{\overrightarrow{\overline{\textbf{r}}}_{k+1}}).\label{s4.5}
\end{align}
On the other hand, based on the definition of $\overrightarrow{\overline{\textbf{r}}}_k$, ones have
\begin{equation}\label{s4.6}
E_b^{\overrightarrow{\overline{\textbf{r}}_k}}(\omega_1^{\overrightarrow{\overline{\textbf{r}}_k}},\cdots,\omega_{k+1}^{\overrightarrow{\overline{\textbf{r}}_k}})\leq
E_b^{\overrightarrow{\widetilde{\textbf{r}}_k}}(\omega_1^{\overrightarrow{\widetilde{\textbf{r}}_k}},\cdots,\omega_{k+1}^{\overrightarrow{\widetilde{\textbf{r}}_k}}),
\end{equation}
which together with (\ref{s4.4})-(\ref{s4.5}) show that
\begin{align}
I_b(u_k)&=E_b^{\overrightarrow{\overline{\textbf{r}}_k}}(\omega_1^{\overrightarrow{\overline{\textbf{r}}_k}},\cdots,\omega_{k+1}^{\overrightarrow{\overline{\textbf{r}}_k}})\notag
\\&<E_b^{\overrightarrow{\overline{\textbf{r}}}_{k+1}}(\omega_1^{\overrightarrow{\overline{\textbf{r}}}_{k+1}},\omega_2^{\overrightarrow{\overline{\textbf{r}}}_{k+1}},\omega_3^{\overrightarrow{\overline{\textbf{r}}}_{k+1}},\cdots,\omega_{k+2}^{\overrightarrow{\overline{\textbf{r}}}_{k+1}})=I_b(u_{k+1}).\notag
\end{align}

Since $u_k=\omega_1^{\overrightarrow{\overline{\textbf{r}}_k}}+\cdots+\omega_{k+1}^{\overrightarrow{\overline{\textbf{r}}_k}}$ is a solution of (\ref{s1.1}) changing sign exactly $k$
times, it follows from Lemma \ref{l2.4} that $(-1)^{i+1}\omega_i^{\overrightarrow{\overline{\textbf{r}}_k}}$ is positive for each $i=1,\cdots,k+1$.
We denote $\widehat{\omega}_i:=(-1)^{i+1}\omega_i^{\overrightarrow{\overline{\textbf{r}}_k}}$ for simplicity. Clearly, we see
\begin{equation*}
\|\widehat{\omega}_i\|_i^2+b\left(\displaystyle\int_{B_i^{\overrightarrow{\overline{\textbf{r}}}_{k+1}}}|\nabla\widehat{\omega}_i|^2{\rm d}x\right)^2-\displaystyle\int_{B_i^{\overrightarrow{\overline{\textbf{r}}}_{k+1}}}|\widehat{\omega}|^p{\rm d}x<0
\end{equation*}
and
\begin{equation*}
(4-p)\displaystyle\int_{B_i^{\overrightarrow{\overline{\textbf{r}}}_{k+1}}}|\widehat{\omega}_i|^p{\rm d}x<2\|\widehat{\omega}_i\|_i^2
\end{equation*}
for $i=1,\cdots,k+1$. Thus it follows from Lemma \ref{l2.3} that there exists a unique $0<\widehat{\widehat{t}}_i<1$ such that
\begin{equation*}
\widehat{\widehat{t}}_i\widehat{\omega}_i\in N_0^{{\overrightarrow{\overline{\textbf{r}}}_{k+1}},-}=N_0^-,
\end{equation*}
where  $N_0^-$ is defined by
\begin{equation*}
N_0^-:=\{u\in\mathcal{H}\backslash\{0\}|\langle(I_b)'(u),u\rangle=0,~\langle(J_b)'(u),u\rangle<0\}
\end{equation*}
and
\begin{equation*}
J_b(u):=\langle(I_b)'(u),u\rangle=\|u\|_{\mathcal{H}}^2+b\left(\displaystyle\int_{\mathbb{R}^3}|\nabla u|^2{\rm d}x\right)^2-\displaystyle\int_{\mathbb{R}^3}|u|^p{\rm d}x.
\end{equation*}
Moreover, it holds
\begin{equation*}
I_b(u_0)\leq I_b(\widehat{\widehat{t}}_i\widehat{\omega}_i),
\end{equation*}
for $i=1,\cdots,k+1$. Consequently, by a direct computation,
\begin{align}
&~~~~(k+1)I_b(u_0)\leq\displaystyle \sum_{i=1}^{k+1}I_b(\widehat{\widehat{t}}_i\widehat{\omega}_i)=\displaystyle \sum_{i=1}^{k+1}\left(I_b(\widehat{\widehat{t}}_i\widehat{\omega}_i)-\frac{1}{4}J_b(\widehat{\widehat{t}}_i\widehat{\omega}_i)\right)\notag
\\&=\frac{1}{4}\displaystyle \sum_{i=1}^{k+1}(\widehat{\widehat{t}}_i)^2\|\widehat{\omega}_i\|_i^2-\frac{4-p}{4p}\displaystyle \sum_{i=1}^{k+1}(\widehat{\widehat{t}}_i)^p\displaystyle\int_{B_i^{\overrightarrow{\overline{\textbf{r}}}_{k}}}|\widehat{\omega}_i|^p{\rm d}x\notag
\\&<\frac{1}{4}\displaystyle \sum_{i=1}^{k+1}(\widehat{\widehat{t}}_i)^2\left(\|\widehat{\omega}_i\|_i^2+b\displaystyle\int_{B_i^{\overrightarrow{\overline{\textbf{r}}}_{k}}}|\nabla\widehat{\omega}_i|^2{\rm d}x\displaystyle\sum_{j\neq i}^{k+1}(\widehat{\widehat{t}}_j)^2\displaystyle\int_{B_j^{\overrightarrow{\overline{\textbf{r}}}_{k}}}|\nabla\widehat{\omega}_j|^2{\rm d}x\right)\notag
\\&~~~~-\frac{4-p}{4p}\displaystyle \sum_{i=1}^{k+1}(\widehat{\widehat{t}}_i)^p\displaystyle\int_{B_i^{\overrightarrow{\overline{\textbf{r}}}_{k}}}|\widehat{\omega}_i|^p{\rm d}x\notag
\\&<\frac{1}{4}\displaystyle \sum_{i=1}^{k+1}\left(\|\widehat{\omega}_i\|_i^2+b\displaystyle\int_{B_i^{\overrightarrow{\overline{\textbf{r}}}_{k}}}|\nabla\widehat{\omega}_i|^2{\rm d}x\displaystyle\sum_{j\neq i}^{k+1}\displaystyle\int_{B_j^{\overrightarrow{\overline{\textbf{r}}}_{k}}}|\nabla\widehat{\omega}_j|^2{\rm d}x\right)-\frac{4-p}{4p}\displaystyle \sum_{i=1}^{k+1}\displaystyle\int_{B_i^{\overrightarrow{\overline{\textbf{r}}}_{k}}}|\widehat{\omega}_i|^p{\rm d}x\notag
\\&=E_b^{\overrightarrow{\overline{\textbf{r}}_k}}(\omega_1^{\overrightarrow{\overline{\textbf{r}}_k}},\cdots,\omega_{k+1}^{\overrightarrow{\overline{\textbf{r}}_k}})=I_b(u_k),\notag
\end{align}
where we have used Lemma \ref{l2.3}.

To complete the proof, we need to show that $u_0$ is a ground state radial solution of (\ref{s1.1}). Noticing that $u_0$ is a minimizer of  $N_0^-$,  we only need to show that all the least energy radial solutions of
(\ref{s1.1}) are in $N_0^-$. Actually, let $v_0$ be a least energy radial solutions of
(\ref{s1.1}). Then, we deduce from (\ref{s1.2}) that
\begin{equation}\label{s4.7}
\|v_0\|_{\mathcal{H}}^2+b\left(\displaystyle\int_{\mathbb{R}^3}|\nabla v_0|^2{\rm d}x\right)^2=\displaystyle\int_{\mathbb{R}^3}|\nabla v_0|^p{\rm d}x.
\end{equation}
Moreover, since $\langle\nabla V(x),x\rangle\leq0$ for any $x\in\mathbb{R}^3$, we find from \cite{28} that $v_0$ also satisfies the following Pohoz\v{a}ev identity
\begin{align}\label{s4.8}
&\frac{1}{2}\displaystyle\int_{\mathbb{R}^3}|\nabla v_0|^2{\rm d}x+\frac{3}{2}\displaystyle\int_{\mathbb{R}^3}V(x)|v_0|^2{\rm d}x+\frac{1}{2}\displaystyle\int_{\mathbb{R}^3}\langle \nabla V(x),x\rangle|v_0|^2{\rm d}x\notag
\\&~~~~+\frac{b}{2}\left(\displaystyle\int_{\mathbb{R}^3}|\nabla v_0|^2{\rm d}x\right)^2-\frac{3}{p}\displaystyle\int_{\mathbb{R}^3}|\nabla v_0|^p{\rm d}x=0.
\end{align}
Combining (\ref{s4.7}) and  (\ref{s4.8}) yield that
\begin{equation}\label{s4.9}
\displaystyle\int_{\mathbb{R}^3}V(x)|v_0|^2{\rm d}x+\frac{1}{2}\displaystyle\int_{\mathbb{R}^3}\langle \nabla V(x),x\rangle|v_0|^2{\rm d}x=\frac{6-p}{2p}\displaystyle\int_{\mathbb{R}^3}|\nabla v_0|^p{\rm d}x,
\end{equation}
which together with the condition $\langle\nabla V(x),x\rangle\leq0$ and the assumption $3\leq p<4$ show that
\begin{equation}\label{s4.10}
\displaystyle\int_{\mathbb{R}^3}|\nabla v_0|^p{\rm d}x<\frac{2}{4-p}\|v_0\|_{\mathcal{H}}^2.
\end{equation}
Therefore, by (\ref{s4.7}) and  (\ref{s4.10}), we see $ v_0\in N_0^-$.
Then we complete the proof.
\end{proof}

Finally, we will prove the convergence property of $u_k^b$ as $b\rightarrow0^+$.
\begin{proof}
For any $b\in(0,b_*)$, it follows from Theorems 1.1 and 1.2 that there is a radial solution  $u_k^b\in\mathcal{H}$ of (\ref{s1.1}) which changes sign exactly $k$ times.
Let $b_n\rightarrow0^+$ be any sequence as $n\rightarrow\infty$, since $\overrightarrow{\overline{\textbf{r}}}_{k}=(\overline{r}_1,\cdots,\overline{r}_k)\in\Lambda_k$ obtained by
Lemma \ref{l3.1} does not depend on $b$, we know that a family of annuli $\{B_i^{\overrightarrow{\overline{\textbf{r}}}_{k}}\}_{i=1}^{k+1}$ are fixed. Moreover, clearly, that
\begin{equation*}
\alpha_k^{\overrightarrow{\overline{\textbf{r}}}_{k},-}:=\displaystyle\inf_{(\omega_1,\cdots,\omega_{k+1})\in N_k^{\overrightarrow{\overline{\textbf{r}}}_{k},-}}E_b^{\overrightarrow{\overline{\textbf{r}}}_{k}}(\omega_1,\cdots,\omega_{k+1})
\end{equation*}
is decreasing as $b\searrow0^+$. Thus, there exists $C_0>0$ such that
\begin{align}
C_0&>E_{b_n}^{\overrightarrow{\overline{\textbf{r}}}_{k}}(\omega^{\overrightarrow{\overline{\textbf{r}}}_{k}}_{1,b_n},\cdots,\omega^{\overrightarrow{\overline{\textbf{r}}}_{k}}_{k+1,b_n})\notag
\\&=E_{b_n}^{\overrightarrow{\overline{\textbf{r}}}_{k}}(\omega^{\overrightarrow{\overline{\textbf{r}}}_{k}}_{1,b_n},\cdots,\omega^{\overrightarrow{\overline{\textbf{r}}}_{k}}_{k+1,b_n})-\frac{1}{4}\displaystyle \sum_{i=1}^{k+1}\langle\partial_iE_{b_n}^{\overrightarrow{\overline{\textbf{r}}}_{k}}(\omega^{\overrightarrow{\overline{\textbf{r}}}_{k}}_{1,b_n},\cdots,\omega^{\overrightarrow{\overline{\textbf{r}}}_{k}}_{k+1,b_n}),\omega^{\overrightarrow{\overline{\textbf{r}}}_{k}}_{i,b_n}\rangle\notag
\\&>\frac{1}{4}\displaystyle \sum_{i=1}^{k+1}\|\omega^{\overrightarrow{\overline{\textbf{r}}}_{k}}_{i,b_n}\|_i^2-\left(\frac{1}{p}-\frac{1}{4}\right)\displaystyle \sum_{i=1}^{k+1}\displaystyle\int_{B_i^{\overrightarrow{\overline{\textbf{r}}}_{k}}}|\omega^{\overrightarrow{\overline{\textbf{r}}}_{k}}_{i,b_n}|^p{\rm d}x\notag
\\&>\frac{p-2}{4p}\displaystyle \sum_{i=1}^{k+1}\|\omega^{\overrightarrow{\overline{\textbf{r}}}_{k}}_{i,b_n}\|_i^2=\frac{p-2}{4p}\|u_k^{b_n}\|_{\mathcal{H}}^2,\notag
\end{align}
where we have used the fact $(\omega^{\overrightarrow{\overline{\textbf{r}}}_{k}}_{1,b_n},\cdots,\omega^{\overrightarrow{\overline{\textbf{r}}}_{k}}_{k+1,b_n})\in N_k^{\overrightarrow{\overline{\textbf{r}}}_{k},-}$. Then we obtain that $\{u_k^{b_n}\}$ is bounded in $\mathcal{H}$. Thus, there exists a subsequence of
$\{u_k^{b_n}\}$ we still denote by it for convenience, such that $u_k^{b_n}\rightharpoonup u_k^0\neq0$ weakly in $\mathcal{H}$. Furthermore, standard argument shows that $u_k^0$
is a weak solution of (\ref{s1.3}). By the compactness of the embedding $\mathcal{H}\rightharpoonup L^q(\mathbb{R}^3)$ for $2<q<6$, we have
\begin{align}
&~~~~\|u_k^{b_n}-u_k^0\|_{\mathcal{H}}^2\notag
\\&=\langle I'_{b_n}(u_k^{b_n})-I_0'{u_k^0},u_k^{b_n}-u_k^0\rangle-b_n\displaystyle\int_{\mathbb{R}^3}|\nabla u_k^{b_n}|^2{\rm d}x\displaystyle\int_{\mathbb{R}^3}\nabla u_k^{b_n}(\nabla u_k^{b_n}-\nabla u_k^0){\rm d}x\notag
\\&~~~~+\displaystyle\int_{\mathbb{R}^3}|u_k^{b_n}|^{p-2}u_k^{b_n}(u_k^{b_n}-u_k^0){\rm d}x-\displaystyle\int_{\mathbb{R}^3}|u_k^0|^{p-2}u_k^0(u_k^{b_n}-u_k^0){\rm d}x\rightarrow0,\notag
\end{align}
as $n\rightarrow\infty$. Then we get $u_k^{b_n}\rightarrow u_k^0$ strongly in $\mathcal{H}$. Moreover, it follows from $(\omega^{\overrightarrow{\overline{\textbf{r}}}_{k}}_{1,b_n},\cdots,\omega^{\overrightarrow{\overline{\textbf{r}}}_{k}}_{k+1,b_n})\in N_k^{\overrightarrow{\overline{\textbf{r}}}_{k},-}$. Then we obtain that $\{u_k^{b_n}\}$
that
\begin{equation*}
\|\omega^{\overrightarrow{\overline{\textbf{r}}}_{k}}_{i,b_n}\|_i^2\leq\displaystyle\int_{B_i^{\overrightarrow{\overline{\textbf{r}}}_{k}}}|\omega^{\overrightarrow{\overline{\textbf{r}}}_{k}}_{i,b_n}|^p{\rm d}x\leq S_p^\frac{p}{2}\|\omega^{\overrightarrow{\overline{\textbf{r}}}_{k}}_{1,b_n}\|_i^p,
\end{equation*}
and so
\begin{equation*}
\|\omega^{\overrightarrow{\overline{\textbf{r}}}_{k}}_{i,b_n}\|_i^{p-2}\geq C_i>0,
\end{equation*}
for $i=1,\cdots,k+1$. Thus,
\begin{equation*}
\|\omega^{\overrightarrow{\overline{\textbf{r}}}_{k}}_{i,0}\|_i^{p-2}\geq C_i>0,
\end{equation*}
for $i=1,\cdots,k+1$, i.e., $u_k^0$ changes sign exactly $k$ times.

Let $v_k$ be a least energy radial solution of (\ref{s1.3}), and $v_k=v_{k,1}+\cdots+v_{k,k+1}$ with $v_{k,i}$ is supported on only one annulus $B_i^{\overrightarrow{\overline{\textbf{r}}}_{k}}$ and
vanishes at the complement of it for $i=1,\cdots,k+1$. Then we have
\begin{equation}\label{s4.11}
I_0(v_k)=\left(\frac{1}{2}-\frac{1}{p}\right)\|v_k\|_{\mathcal{H}}^2=\left(\frac{1}{2}-\frac{1}{p}\right)\displaystyle\sum_{i=1}^{k+1}\|v_{k,i}\|_i^2=\alpha_{0,k},
\end{equation}
where
\begin{equation*}
\alpha_{0,k}:=\displaystyle\inf_{(u_1,\ldots,u_{k+1})\in M_k}I_0(u_1+\cdots+u_{k+1})
\end{equation*}
and
\begin{equation}\label{s4.12}
M_k:=\left\{(u_1,\ldots,u_{k+1})\in\mathcal{H}_k|\|v_{k,i}\|_i^2=\displaystyle\int_{B_i}|u_i|^p{\rm d}x\right\}.
\end{equation}
The existence of $v_k$ can be obtained by a similar proof of Theorem 1.1.
Furthermore, (\ref{s4.11}) and (\ref{s4.12}) yield that
\begin{equation*}
S_p^\frac{p}{p-2}\leq\|v_{k,i}\|_i^2=\displaystyle\int_{B_i}|v_{k,i}|^p{\rm d}x=\frac{2p}{p-2}\alpha_{0,k},
\end{equation*}
for $i=1,\cdots,k+1$.
Then, similarly as the proof of Lemma \ref{l2.1},
there exists a unique $(k+1)$-tuple $(\widehat{t}_1(b_n),\cdots,\widehat{t}_{k+1}(b_n))$ of positive numbers such that
\begin{equation*}
(\widehat{t}_1(b_n)v_{k,1},\cdots,\widehat{t}_{k+1}(b_n)v_{k,k+1})\in N_k^{\overrightarrow{\overline{\textbf{r}}}_{k},-},
\end{equation*}
for $n$ large enough. Moreover, ones deduce that
\begin{equation*}
(\widehat{t}_1(b_n),\cdots,\widehat{t}_{k+1}(b_n))\rightarrow(1,\cdots,1),
\end{equation*}
as $n\rightarrow\infty$. Consequently,
\begin{align}
I_0(v_k)&\leq I_0(u_k^0)=\displaystyle\lim_{n\rightarrow\infty}I_{b_n}(u_k^{b_n})=\displaystyle\lim_{n\rightarrow\infty}E_{b_n}^{\overrightarrow{\overline{\textbf{r}}}_{k}}(\omega^{\overrightarrow{\overline{\textbf{r}}}_{k}}_{1,b_n},\cdots,\omega^{\overrightarrow{\overline{\textbf{r}}}_{k}}_{k+1,b_n})\notag
\\&\leq\displaystyle\lim_{n\rightarrow\infty}E_{b_n}^{\overrightarrow{\overline{\textbf{r}}}_{k}}(\widehat{t}_1(b_n)v_{k,1},\cdots,\widehat{t}_{k+1}(b_n)v_{k,k+1})\notag
\\&=E_0^{\overrightarrow{\overline{\textbf{r}}}_{k}}(v_{k,1},\cdots,v_{k,k+1})=I_0(v_k),\notag
\end{align}
and so $u_k^0$ is a least energy radial solution of (\ref{s1.3}) which changes signs exactly $k$ times. The proof of Theorem 1.3 is completed.
\end{proof}

%\section{ Acknowledgement}
%This work was supported by the National Natural Science Foundation of China (Grant No.12101599). Moreover, all the authors in this work do not have any conflicts of interest.
%\section{ Data Availability}
%The data that support the findings of this study are available from the corresponding author
%upon reasonable request.

\medskip

\noindent{\bf Data availability.} We declare that the manuscript has no associated data.
\vskip3pt
\noindent{\bf Conflict of interest.} The authors have no conflict of interest to declare for this article.

\medbreak

\noindent{\bf Acknowledgements}\,\, Marco Squassina is supported by Gruppo Nazionale per l'Analisi Ma\-te\-ma\-ti\-ca, la Probabilit\`a e le loro Applicazioni. H. Fan is supported by National Natural Science Foundation of China (12101599). J. Zhang is supported by National Natural Science Foundation of China (12371109).


\begin{thebibliography}{10}

\bibitem{1}G. Kirchhoff, Mechanik, Teubner, Leipzig, 1883.

\bibitem{2}M. Chipot, B. Lovat, Some remarks on non-local elliptic and parabolic problems, Nonlinear Anal. 30(1997)4619-4627.

\bibitem{3}M. Cavalcanti, V. Cavalcanti, J. Soriano, Global existence and uniform decay rates for the Kirchhoff-Carrier equation with nonlinear dissipation, Adv. Differential Equations. 6(2001)701-730.

\bibitem{4}J. Lions, On some questions in boundary value problems of mathematical physics. North-Holland Math. Stud. 30(1978)284-346.

\bibitem{5}X. He, W. Zou, Existence and concentration behavior of positive solutions for a Kirchhoff
equation in $R^3$, J. Differential Equations. 2(2012)1813-1834.

\bibitem{6}J. Wang, L. Tian, J. Xu, F. Zhang, Multiplicity and concentration of positive solutions for a Kirchhoff type problem with critical growth, J. Differential Equations. 253(2012)2314-2351.

\bibitem{7}H. Fan, Y. Wang, L. Zhao, On a class of Kirchhoff type logarithmic Schr\"{o}dinger equations involving the critical or supercritical Sobolev exponent, J. Math. Phys. 65(2024)031509.

\bibitem{8}H. Fan, Positive solutions for a Kirchhoff-type problem involving multiple competitive potentials and critical Sobolev exponent. Nonlinear Anal., 198(2020)111869.

\bibitem{9}Y. He, G. Li, Standing waves for a class of Kirchhoff type problems in  $\mathbb{R}^3$ involving critical Sobolev exponents, Calc. Var., 54(2015)3067-3106.

\bibitem{10}D. Sun, Z. Zhang, Uniqueness, existence and concentration of positive ground state solutions for Kirchhoff type problems in $\mathbb{R}^3$, J. Math. Anal. Appl. 461(2018)128-149.

\bibitem{11}W. Shuai, Sign-changing solutions for a class of Kirchhoff-type problem in bounded domains, J. Differential Equations. 257(2014)566-600.

\bibitem{12}G. Li, H. Ye, Existence of positive ground state solutions for nonlinear Kirchhoff type equations in $\mathbb{R}^3$, J. Differential Equations. 257(2014)566-600.

\bibitem{13}T. Hu, L. Lu, Multiplicity of positive solutions for Kirchhoff type problems in $\mathbb{R}^3$, Topol. Methods Nonlinear Anal., 50(2017)231-252.

\bibitem{14}J. Sun, T.F. Wu, Existence and multiplicity of solutions for an indefinite Kirchhoff-type equation in bounded domains, Proc. Roy. Soc. Edinburgh Sect. A. 146(2016)435-448.

\bibitem{15}Z. Liu, Y. Lou, J. Zhang, A perturbation approach to studying sign-changing solutions of Kirchhoff equations with a general nonlinearity, Ann. Mat. 201(2022)1229-1255.

\bibitem{16}J. Sun, L. Li, M. Cencelj. B. Gabrov\u{s}ek, Infinitely  many sign-changing solutions for Kirchhoff type problems in $\mathbb{R}^3$, Nonlinear Anal. 186(2019)33-54.

\bibitem{17}Z. Zhang, K. Perera, Sign-changing solutions of Kirchhoff type problems via invariant sets of descent flow, J. Math. Anal. Appl. 317(2006)456-463.

\bibitem{18}X. Tang, B. Cheng, Ground state sign-changing solutions for Kirchhoff type problems in bounded domains,  J. Differential Equations. 261(2016)2384-2402.

\bibitem{19}H. Ye, The existence of least energy nodal solutions  for some class of Kirchhoff equations and Choquard equations in $\mathbb{R}^N$, J. Math. Anal. Appl. 431(2015)935-954.

\bibitem{20}T. Bartsch, M. Willem, Infinitely many radial solutions of a semilinear elliptic problem on $\mathbb{R}^N$, Arch. Ration. Mech. Anal. 124(1993)261-276.

\bibitem{21}D. Cao, X. Zhu, On the existence and nodal character of semilinear elliptic equations, Acta Math. Sci. 8(1988)345-359.

\bibitem{22}M. Struwe, Superlinear elliptic boundary value problems with rotational symmetry, Arch. Math. 39(1982)233-240.

\bibitem{23}Y. Deng, S. Peng, W. Shuai, Existence and asymptotic behavior of nodal solutions for the Kirchhoff-type problems in $\mathbb{R}^3$, J. Funct. Anal. 269(2015)3500-3527.

\bibitem{24}H. Guo, R. Tang, T. Wang, Infinitely many nodal solutions with a prescribed number of nodes for the Kirchhoff type equations, J. Math. Anal. Appl. 505(2022)125519.

\bibitem{25}W. Strauss, Existence of solitary waves in higher dimensions, Comm. Math. Phys. 55(1977)149-162.

\bibitem{26}W. Kulpa, The Poincar\'{e}-Miranda theorem, The Amer Math Mon. 104(1997)545-550.

\bibitem{27}M. Willem, Minimax theorems, Birkh\"{a}user. 1996.

\bibitem{28}I. Kuzin, S. Pohoz\v{a}ev, Entire solutions of semilinear elliptic equations, Birkh\"{a}user. 1997.


\end{thebibliography}
\end{document}